\title[Parametric reflection maps: an algebraic approach]
{Parametric reflection maps: an algebraic approach}
\author[Anastasia Doikou]{Anastasia Doikou}
\author[Marzia Mazzotta]{Marzia Mazzotta}
\author[Paola Stefanelli]{Paola Stefanelli}
\address[Anastasia Doikou]{Department of Mathematics, Heriot-Watt University,
Edinburgh EH14 4AS $\&$ Maxwell Institute for Mathematical Sciences, Edinburgh EH8 9BT, UK\\
ORCID: 0000-0001-6869-9389}
\email{a.doikou@hw.ac.uk}
\address[Marzia Mazzotta $\&$ Paola Stefanelli]{Dipartimento di Matematica e Fisica “Ennio De Giorgi”,  Università 
del Salento, Via Provinciale Lecce-Arnesano, 73100 Lecce, Italy, ORCID (M. Mazzotta): 0000-0001-6179-9862, ORCID (P. Stefanelli): 0000-0003-3899-3151}
\email{marzia.mazzotta@unisalento.it,\ paola.stefanelli@unisalento.it}
 \newcolumntype{2}{D{.}{}{2.0}}
\newcommand{\hiddenpower}[2] { \ifnum \numexpr#2=1 #1 \else #1^#2 \fi }
\numberwithin{equation}{section}
\def\be{\begin{equation}}
\def\ee{\end{equation}}
\def\ba{\begin{eqnarray}}
\def\ea{\end{eqnarray}}
\newcommand{\cal}{\mathcal}
\newcounter{diff_order}
\newcounter{diff_power}
\newcommand{\rawdiff}[3]
{
	\setcounter{diff_order}{0}
	\clist_map_inline:nn{#3}{\stepcounter{diff_order}}
	
	\frac{\hiddenpower{#1}{\thediff_order} #2}
	{
		\def\old_var{DefaultValue}
		\setcounter{diff_power}{0}
		
		\clist_map_inline:nn{#3}
		{
			\def\new_var{##1}
			\ifnum \thediff_power=0
				\stepcounter{diff_power}
			\else
				\tl_if_eq:NNTF \new_var \old_var
				{\stepcounter{diff_power}}
				{
					#1 \hiddenpower{\old_var}{\thediff_power}
					\setcounter{diff_power}{1}
				}
			\fi

			\def\old_var{##1}
		}
		
		#1 \hiddenpower{\old_var}{\thediff_power}
	}
}
\def\Label#1{\label{#1}\ifmmode\llap{[#1] }\else 
  \marginpar{\smash{\hbox{\tiny [#1]}}}\fi} 
  \def\Label{\label} 
\newlength{\bibitemsep}\setlength{\bibitemsep}{.2\baselineskip plus .05\baselineskip minus .05\baselineskip}
\newlength{\bibparskip}\setlength{\bibparskip}{0pt}
\let\oldthebibliography\thebibliography
\renewcommand\thebibliography[1]{%
  \oldthebibliography{#1}%
  \setlength{\parskip}{\bibitemsep}%
  \setlength{\itemsep}{\bibparskip}%
}
\newtheorem{thm}{Theorem}[section]
\newtheorem{lemma}[thm]{Lemma}
\newtheorem{cor}[thm]{Corollary}
\newtheorem{pro}[thm]{Proposition}
\newtheorem{defn}[thm]{Definition}
\newtheorem{rem}[thm]{Remark}
\newtheorem{exa}[thm]{Example}
\newcommand{\Sym}{\operatorname{Sym}}
\newcommand{\id}{\operatorname{id}}
\newenvironment{widegather }{\wideregion[-9mm]\gather}{\endgather\endwideregion}
\begin{document}
\vskip 0.8in

\hfill
 \begin{abstract} 
We study solutions of the parametric set-theoretic reflection equation from an algebraic perspective by employing recently derived generalizations of the familiar shelves and racks, called parametric ($p$)-shelves and racks.  Generic invertible solutions of the set-theoretic reflection equation are also obtained by a suitable parametric twist. The twist leads to considerably simplified constraints compared to the ones obtained from general set-theoretic reflections. In this context, novel algebraic structures of (skew) $p$-braces that generalize the known (skew) braces and are suitable for the parametric Yang-Baxter equation are introduced.  The $p$-rack Yang-Baxter and reflection operators as well as the associated algebraic structures are defined 
setting up the frame for formulating the $p$-rack reflection algebra.
\end{abstract}
\maketitle





\section*{Introduction} 
\noindent  
The main goal of the present investigation is the study of the parametric set-theoretic reflection equation from a purely algebraic point of view. This is the first time such a study has been undertaken and is highly motivated by recent results on the parametric set-theoretic Yang-Baxter equation in \cite{Doikoup}, but also by recent investigations on the parameter-free reflection equation associated with racks \cite{AlMaSt24x}.

The reflection equation was first introduced and studied in \cite{Cherednik, Sklyanin} in the context of quantum integrable systems offering a systematic method of classifying integrable boundary conditions in one-dimensional integrable lattice systems and two-dimensional integrable quantum field theories. 
The boundary effects, controlled by the reflection equation, shed new light on the bulk theories themselves, and also paved the way to new mathematical concepts and physical applications. The study of the reflection equation goes hand in hand with the study of the Yang-Baxter equation \cite{Baxter, Yang} given that solutions of the Yang-Baxter equation are the basic building blocks of the reflection equation itself. From a physical point of view, especially in the context of quantum integrable systems, solutions of the Yang-Baxter equation characterize the scattering among particle excitations of the quantum integrable systems, whereas the Yang-Baxter equation describes the factorization of many particle scattering. Analogously, solutions of the reflection equation describe the reflection of particle-like excitations to the boundary of the integrable system \cite{Sklyanin, Cherednik}. From a mathematical aspect, the Yang-Baxter equation leads to certain quadratic relations that define various quantum (Hopf) algebras depending on the choice of the solution of the Yang-Baxter equation \cite{FRT, Drinfeld, Jimbo1, Jimbo2}, while the reflection equation provides the defining relations of the so-called reflection algebras, which are typically subalgebras of quantum algebras \cite{Sklyanin}.

In the present study, we focus on solutions of the parametric, set-theoretic Yang-Baxter and reflection equations. The set-theoretic Yang-Baxter equation first introduced by Drinfel'd \cite{Dr92}, whereas a plethora of studies followed on the investigation of solutions of the equation and associated algebraic structures, see for instance \cite{Bachi,CeJeOk14, CoJeKuVaVe23, DoiRyb22, DoRy23}, \cite{GatMaj}-\cite{Gat3}, \cite{EtScSo99, Crystal2, GuaVen, Pili, JesKub, Jesp2, Lebed, LebVen, MaRySt24, Papa, Papa2}, \cite{Ru05}-\cite{Ru19}, \cite{Sol, Veselov}. The parametric set-theoretical reflection equation together with the first examples of solutions appeared for the first time in \cite{Cau2}, while a more systematic study and a classification inspired by maps appearing in integrable discrete systems \cite{Adler, Papa, Veselov} are presented in \cite{CauCra}. In \cite{SmVeWe} methods from the theory of braces were used to produce families of new solutions to the non-parametric reflection equation, and in \cite{Decom} skew braces were used to produce reflections. In \cite{DoiSmo} the reflection algebra associated with involutive, set-theoretic solutions is studied and physical Hamiltonians that describe quantum spin chain systems with integrable boundary conditions coming from set-theoretic solutions are also derived. In \cite{LebVen} the set-theoretic reflection equation is employed for the derivation of solutions of the set-theoretic Yang-Baxter equation. More recently, non-involutive, parameter-free reflection maps associated with racks and quandles were investigated in \cite{AlMaSt24x}. 

Up to date, the parameter-free Yang-Baxter and reflection equations have been investigated primarily from an algebraic perspective, whereas the parametric equations and their solutions have been examined mostly in the context of classical integrable systems in a geometric frame. However, in \cite{Doikoup} an entirely algebraic analysis for the parametric, set-theoretic Yang-Baxter equation was carried out, and purely algebraic solutions were produced. Indeed, utilizing the main definitions and results of \cite{Doikoup} we will be able to proceed with our analysis on the parametric reflection equation. Specifically, we give below a brief account of what is achieved in each of the subsequent sections and what the main results are:
\begin{enumerate}

\item In Section 1 we recall basic definitions that will be used extensively in our analysis. Specifically, in Subsection 1.1  we introduce the parametric set-theoretic reflection equation for a given solution of the parametric set-theoretic Yang-Baxter equation. In Subsection 1.2, we recall the definitions of parametric shelves and racks ($p$-shelves, $p$-racks) introduced in \cite{Doikoup} as objects that satisfy a parametric self-distributivity condition generalizing the definitions of the familiar shelves and racks.  Two new Theorems \ref{prop-rack-p-r} and \ref{prop-inv} on the construction of $p$-racks from the usual racks are also formulated.

\item In Section 2 we proceed with our main analysis of reflection maps obtained from parametric shelves and racks. We first derive the basic conditions emerging from the parametric reflection equation for any reflection map associated with $p$-shelves (Proposition \ref{shelvesf}) and $p$-racks (Corollary \ref{rackf}). We then present an algorithm to obtain reflections associated with $p$-shelves and $p$-racks from the familiar shelves, racks. This is presented in the main Theorem \ref{refl-prop-rack-p-r}. Utilizing the obtained $p$-shelves, $p$-racks we present various examples of relevant reflection maps.

\item Section 3 is devoted to the study of general solutions of the parametric set-theoretic Yang-Baxter and reflection equation obtained via a Drinfel'd twist. In Subsection 3.1, we derive the conditions for reflection maps associated with general solutions of the Yang-Baxter equation, and we then recall that all solutions are obtained from $p$-shelf, $p$-rack solution by means of an admissible Drinfel'd twist. Using the admissible twist we are then able to derive generic parametric reflection maps from rack reflections (Proposition \ref{reflectionA}). Interestingly, the twist leads to considerably simplified constraints compared to the ones obtained from general set-theoretic reflections, facilitating the derivation of general solutions of the parametric set-theoretic reflection equation. In Subsection 3.2, we exploit the results of the previous subsection and present algorithms that systematically lead to general solutions of the parametric Yang-Baxter equation for both the reversible and the general cases. A brief analysis on the algebraic structures emerging in solutions known from classical integrability and the re-factorization technique \cite{Adler, Papa2, Veselov} is also presented. In the case of general bijective solutions we provide several distinct examples of solutions coming from different types of $p$-racks.

\item In Section 4, we introduce the parametric version of braces and skew braces, which we call $p$-braces and skew $p$-braces (see \cref{p-brace} and \cref{skew p-brace}). These are in general non-associative structures and are derived in terms of a reversible $p$-affine and $p$-affine structure on a fixed group (see \cite{Rump1, Rump2, St23} for the non-parametric case).
Such maps allow for construction of $p$-racks, which have as underlying structure the same group.
In addition, we provide admissible Drinfel'd twists for these $p$-racks and then parametric solutions related to them
(see \cref{thm_affine} and \cref{thm_skew_p}).

\item  In the last section, we study the underlying Yang-Baxter and reflection algebras, and their coproduct structures in the set-theoretic frame.
Specifically, we focus on the Yang-Baxter and reflection algebraic structures associated with $p$-rack solutions of the Yang-Baxter equation.
Bearing in mind the definition of braided groups and braidings in \cite{chin}, \cite{GatMaj} 
and their deformations \cite{DoiRyb22, DoRy23, DoRySt} we further generalize the definitions to introduce 
the parametric Yang-Baxter and reflection structures. The
parametric Yang-Baxter operator and associated algebraic structures were studied in \cite{Doikoup}.
More precisely, we define the $p$-rack reflection operator and we show that any such operator satisfies automatically the reflection equation. We also show certain properties of such operators that follow naturally from the definition (see in particular Proposition \ref{le:p-def1}).

    \end{enumerate}

\section{Preliminaries}

\subsection{The parametric reflection equation}
Before we introduce the parametric set-theoretic reflection equation, we recall the parametric set-theoretic Yang-Baxter equation.
Let $X, \ Y \subseteq X$ be non-empty sets,  $z_{i,j} \in Y,$ $i,j \in \mathbb {\mathbb Z}^+$ ($z_{i,j,k,\ldots}$ is shorthand for $z_{i}, z_j, z_k,\ldots$) and consider a map $R^{z_{ij}}:X\times X\rightarrow X\times X$ such that, for all $a, b\in X,$
 $~R^{z_{ij}}(b, a)= \big (\sigma^{z_{ij}} _{a}(b), \tau^{z_{ij}}_{b}(a)\big ).$
The notation $z_{i j}$ denotes dependence 
on $(z_i,  z_j).$  We say that $(X, R^{z_{ij}})$ is a \emph{solution of the parametric, set-theoretic Yang-Baxter equation (or simply a solution)} if 
\begin{equation}
R_{12}^{z_{12}} \ R^{z_{13}}_{13}\ R_{23}^{z_{23}} =  R_{23}^{z_{23}}\ R^{z_{13}}_{13}\  R_{12}^{z_{12}}, \label{YBE}
\end{equation}
where, for all $a,b,c \in X$, $R_{12}^{z_{ij}}(c,b,a) = \left(\sigma^{z_{ij}}_b(c) ,\tau^{z_{ij}}_c(b), a\right),$ 
$R_{13}^{z_{ij}}(c,b,a) = \left(\sigma^{z_{ij}}_a(c) ,b, \tau^{z_{ij}}_c(a)\right)$ and $R_{23}^{z_{ij}}(c,b,a) = \left(c, \sigma^{z_{ij}}_a(b) ,\tau^{z_{ij}}_b(a)\right).$ 
We say that $R^{z_{ij}}$ is \emph{left non-degenerate} if, for all $z_{i,j}\in Y,$ $\sigma^{z_{ij}}_{a}$ is a bijective function and  \emph{non-degenerate} if both $\sigma^{z_{ij}}_{a},\ \tau^{z_{ij}}_{b}$ are bijective functions. Also, the solution $(X, R^{z_{ij}})$  is called \emph{reversible} \cite{Adler, Papa, Papa2} if $R^{z_{21}}_{21} R^{z_{12}}_{12} = \id_{X \times X}$. All solutions from the point of view of discrete integrable systems \cite{Adler, Papa, Papa2, Veselov} or the re-factorization approach are reversible and it was shown in \cite{Doikoup} that they are reduced to the identity map.  

We may now introduce the parametric set-theoretic reflection equation. In this case, in addition to the $R$-maps we also need a map $K^{z}: X \to X,$ such that $a \mapsto \kappa^{z}(a)$. Let also $\mu: Y \to Y$ be an involutive map, such that $z \mapsto \mu(z)$ for all $z \in Y,$ and we introduce the shorthand notation $z_{\bar j} = \mu(z_j),$ for all $z_j \in Y.$
Then the  map $K^z,$ with $z\in Y,$ is called a \emph{parametric reflection} (or simply reflection in this manuscript) 
if it is a solution of the parametric reflection equation:
\begin{equation}\label{reflec_eq}
R_{12}^{z_{12}}\ K_1^{z_1}\ R_{21}^{z_{2\bar 1}}\ K_2^{z_2} = K_2^{z_2}\ R_{12}^{z_{1\bar2}}\ K_1^{z_1}\  R_{21}^{z_{\bar 2 \bar 1}},
\end{equation}
for all $z_{1,2} \in Y$, where for all $a, b \in X$, $z\in Y,$ $K_1^{z}(a,b) = (\kappa^z(a), b)$ and $K_2^z(a,b) = (a, \kappa^{z}(b)).$

It is useful to recall the main proposition about generic-type (or ``dynamical'') 
solutions of the reflection equation (representation of the reflection algebra) 
introduced by Sklyanin \cite{Sklyanin}. 
\begin{pro} (Sklyanin \cite{Sklyanin}) Let $Y \subseteq X$ and for all $z_{i} \in Y,$ $i\in {\mathbb Z}^+$, let $R^{z_{12}}: X\times X \to X \times X$ be a solution of the Yang-Baxter equation and $K^{z_1}: X \to X$ be a solution of the reflection equation. Then, 
$${\mathbb K}_{1,3 \ldots n}^{z_{1,3\ldots n}} : =R^{z_{1 n}}_{1n}\ldots  R^{z_{14}}_{14}R^{z_{13}}_{13} K_1^{z_1} R^{z_{3\bar 1}}_{31} R^{z_{4\bar 1}}_{41}\ldots R^{z_{n\bar 1}}_{n1}$$ 
also satisfies the reflection equation, i.e.
\begin{equation}
R_{12}^{z_{12}}\ {\mathbb K}_1^{z_1}\ R_{21}^{z_{2\bar 1}}\ {\mathbb K}_2^{z_{2}} = {\mathbb K}_2^{z_{2}}\ R_{12}^{z_{1\bar2}}\ {\mathbb K}_1^{z_{1}}\  R_{21}^{z_{\bar 2 \bar 1}}.
\end{equation}
(Notation: usually for simplicity we omit the indices $3, \ldots, n$ in ${\mathbb K}_{1,3\ldots n}^{z_{1,3\ldots n}}$ and simply write ${\mathbb K}_1^{z_{1}}$).
\end{pro}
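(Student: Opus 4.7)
Plan: I would prove this by induction on $n$. The base case $n=2$ is tautological, since there are no dressing factors, $\mathbb{K}_i^{z_i}=K_i^{z_i}$, and the claim reduces to the reflection equation for the bare $K$, which is assumed. For the inductive step, write
\[
\mathbb{K}^{(n+1)}_1 = R^{z_{1,n+1}}_{1,n+1}\,\mathbb{K}^{(n)}_1\,R^{z_{n+1,\bar 1}}_{n+1,1}, \qquad
\mathbb{K}^{(n+1)}_2 = R^{z_{2,n+1}}_{2,n+1}\,\mathbb{K}^{(n)}_2\,R^{z_{n+1,\bar 2}}_{n+1,2},
\]
so that adjoining the auxiliary space $n+1$ amounts to two extra layers of dressing, one around each ${\mathbb K}$. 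The task is to transport these four new $R$-factors acting on space $n+1$ through the interior of the putative equation until they reassemble on the opposite side.

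To carry this out I would use only three ingredients: the parametric Yang-Baxter equation \eqref{YBE}, its standard relabelings (for instance $R_{j1}R_{21}R_{2j}=R_{2j}R_{21}R_{j1}$, obtained by permuting the roles of the indices $1,2,3$), and the fact that operators acting on disjoint sets of spaces commute. Starting from the left-hand side, one applies YBE to the triples that involve $n+1$ together with $1$ and $2$ to slide the outer factors $R_{1,n+1}$, $R_{n+1,1}$, $R_{2,n+1}$, $R_{n+1,2}$ outwards, while disjoint-space commutativity brings the inner $\mathbb{K}^{(n)}$ blocks into the configuration required by the inductive hypothesis. The induction hypothesis then swaps $\mathbb{K}^{(n)}_1$ and $\mathbb{K}^{(n)}_2$ and turns the central $R_{12}^{z_{12}}$, $R_{21}^{z_{2\bar 1}}$ into $R_{12}^{z_{1\bar 2}}$, $R_{21}^{z_{\bar 2\bar 1}}$. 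A mirror-symmetric sequence of braidings reassembles the dressing on the right, producing the right-hand side of the equation.

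The main point to watch is the bookkeeping of spectral parameters, especially the involution $\bar z=\mu(z)$ carried by the right-hand dressings $R^{z_{j\bar 1}}_{j1}$, $R^{z_{j\bar 2}}_{j2}$ and by the central $R^{z_{2\bar 1}}_{21}$, $R^{z_{\bar 2\bar 1}}_{21}$. Since \eqref{YBE} holds for arbitrary values of the $z_i$, each relabeled form propagates rigidly through the calculation, so the shifted parameters produced by the central reflection-equation step fit precisely into the slots demanded by the rewinding phase. This is the only non-trivial consistency check; once it is in place the induction closes and the claim follows. A clean template of the entire manipulation is already visible at $n=3$, which is essentially the unique non-trivial case to work out by hand.
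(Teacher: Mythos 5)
Your proposal is correct and follows essentially the same route as the paper, which simply invokes ``repeated use of the Yang-Baxter equation and the reflection equation'' and defers to Sklyanin for details; your induction on the number of auxiliary spaces is the standard way to organize exactly that argument, and the decomposition $\mathbb{K}^{(n+1)}_1 = R^{z_{1,n+1}}_{1,n+1}\,\mathbb{K}^{(n)}_1\,R^{z_{n+1,\bar 1}}_{n+1,1}$ together with the three YBE relabelings you list does close the induction with consistent spectral parameters.
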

\begin{proof}
    The proof is based on the repeated use of the Yang-Baxter equation and the reflection equation (see \cite{Sklyanin} for the detailed proof). 
\end{proof}

\subsection{\texorpdfstring{$p$-}{}shelves and \texorpdfstring{$p$-}{}racks} We recall the definitions of $p$-shelves and $p$-racks first introduced in \cite{Doikoup}.
\begin{defn} \label{pp} Let $X, \ Y \subseteq X$ be  non-empty sets and define for all $z_{i,j} \in Y,$ the binary operation $\triangleright_{z_{ij}}: X \times X \to X,$ $(a,b) \mapsto a \triangleright_{z_{ij}} b.$ The pair $\left(X,\,\triangleright_{z_{ij}} \right)$ 
is said to be a \emph{left parametric $p$-shelf} if\, $\triangleright_{z_{ij}}$\, satisfies the generalized left $p$-self-distributivity:
    \begin{equation}\label{shelf}
a\triangleright_{z_{ik}}\left(b\triangleright_{z_{jk}} c\right)
        = \left(a\triangleright_{z_{ij}} b\right)\triangleright_{z_{jk}}\left(a\triangleright_{z_{ik}} c\right) 
    \end{equation}
    for all $a,b,c\in X,$ $z_{i,j,k} \in Y.$ Moreover, a left $p$-shelf $\left(X,\,\triangleright_{z_{ij}} \right)$ is called 
     a \emph{left $p$-rack} if the maps $L^{z_{ij}}_a:X\to X$ defined by $L^{z_{ij}}_a\left(b\right):= a\triangleright_{z_{ij}} b$, for all $a,b, \in X,$ $z_{i,j} \in Y,$ are bijective.
\end{defn}
From now on, whenever we say $p$-shelf or $p$-rack we mean left $p$-shelf or left $p$-rack.

\begin{rem} In the absence of parameters $z_i$, $p$-shelves reduce to the usual shelves (self-distributive structures) $(X, \triangleright),$ and $p$-racks reduce to the familiar racks (see e.g. \cite{FeRo92} and \cite{Shelf-history2}).
\end{rem}

\begin{pro} \label{prose}
Let $X, \ Y \subseteq X$ be non-empty sets and define for $z_{i, j} \in Y$ 
the binary operation $\triangleright_{z_{ij}}:  X \times X \to X,$ $(a,b) \mapsto a \triangleright_{z_{ij}} b.$ Then the map $R^{z_{ij}}:X \times X \to X \times X$, such that for all $a,b \in X, $ $z_{i,j} \in Y,$ $R^{z_{ij}}(b,a) = (b, b \triangleright_{z_{ij}} a)$, is a left non-degenerate solution of the parametric Yang-Baxter equation if and only if $(X, \triangleright_{z_{ij}})$ is a $p$-shelf. In addition, $R^{z_{ij}}$ is invertible and non-degenerate if and only if $(X, \triangleright_{z_{ij}})$ is a $p$-rack. We call such a map $R^{z_{ij}}$ \emph{$p$-shelf or $p$-rack solution}.
\end{pro}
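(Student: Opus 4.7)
The plan is a direct verification by applying both sides of the parametric Yang-Baxter equation \eqref{YBE} to a generic triple $(c,b,a)\in X^{3}$ and comparing componentwise. For the proposed map $R^{z_{ij}}(b,a)=(b,\,b\triangleright_{z_{ij}}a)$ we have, in the notation of Subsection~1.1, $\sigma^{z_{ij}}_{a}(b)=b$ and $\tau^{z_{ij}}_{b}(a)=b\triangleright_{z_{ij}}a=L^{z_{ij}}_{b}(a)$. In particular $\sigma^{z_{ij}}_{a}=\mathrm{id}_{X}$ is always bijective, so the left non-degeneracy of $R^{z_{ij}}$ is automatic and plays no role in the forward implication.

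First I would expand $R_{12}^{z_{12}}R_{13}^{z_{13}}R_{23}^{z_{23}}(c,b,a)$ in three steps: $R_{23}^{z_{23}}$ yields $(c,b,\,b\triangleright_{z_{23}}a)$; then $R_{13}^{z_{13}}$ yields $(c,b,\,c\triangleright_{z_{13}}(b\triangleright_{z_{23}}a))$; finally $R_{12}^{z_{12}}$ yields $(c,\,c\triangleright_{z_{12}}b,\,c\triangleright_{z_{13}}(b\triangleright_{z_{23}}a))$. Symmetrically, $R_{23}^{z_{23}}R_{13}^{z_{13}}R_{12}^{z_{12}}(c,b,a)$ evaluates successively to $(c,\,c\triangleright_{z_{12}}b,\,a)$, $(c,\,c\triangleright_{z_{12}}b,\,c\triangleright_{z_{13}}a)$ and $(c,\,c\triangleright_{z_{12}}b,\,(c\triangleright_{z_{12}}b)\triangleright_{z_{23}}(c\triangleright_{z_{13}}a))$. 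The first two coordinates coincide tautologically, while equality of the third coordinates for all $a,b,c\in X$ and $z_{1,2,3}\in Y$ is exactly the identity \eqref{shelf} with indices $(i,j,k)=(1,2,3)$ and the variables relabelled so that $c$ plays the role of the acting element. Hence $R^{z_{ij}}$ satisfies \eqref{YBE} if and only if $(X,\triangleright_{z_{ij}})$ is a $p$-shelf.

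For the second claim, invertibility of $R^{z_{ij}}$ amounts to solving $(b',c)=(b,\,b\triangleright_{z_{ij}}a)$ uniquely for $(b,a)$; since $b=b'$ is forced, this is equivalent to $L^{z_{ij}}_{b}$ being bijective for every $b\in X$ and every $z_{i,j}\in Y$, which is exactly the $p$-rack axiom. Non-degeneracy then comes for free, because $\sigma^{z_{ij}}_{a}=\mathrm{id}_{X}$ is already bijective and $\tau^{z_{ij}}_{b}=L^{z_{ij}}_{b}$ is bijective precisely under the $p$-rack hypothesis.

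The proof is essentially a bookkeeping exercise and I anticipate no conceptual obstacle; the only point requiring care is to keep the parameters $z_{12},z_{13},z_{23}$ attached to the correct tensor factors as the $R_{ij}$'s are composed, and to align the argument order in $R^{z_{ij}}(b,a)$ with the variable labelling of the $p$-self-distributivity axiom \eqref{shelf}, so that the matching of indices $(i,j,k)=(1,2,3)$ on the right-hand side is unambiguous.
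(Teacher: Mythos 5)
Your proof is correct: the componentwise expansion of both sides of \eqref{YBE} reduces exactly to the $p$-self-distributivity identity \eqref{shelf} (up to swapping the roles of $a$ and $c$, which is harmless since the identity is quantified over all of $X$), and the invertibility/non-degeneracy discussion via the maps $L_b^{z_{ij}}$ is the right one. The paper itself defers this proof to \cite{Doikoup}, but the direct verification you carry out is precisely the standard argument intended there, so there is nothing further to add.
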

\begin{proof} The proof is contained in \cite{Doikoup}.
\end{proof}

\begin{rem}
If $\left(X,  R^{z_{ij}}\right)$ is a left non-degenerate solution, then the structure $\left(X,  \triangleright_{z_{ij}}\right)$ is a $p$-shelf where $\triangleright_{z_{ij}}$ is defined by $a\triangleright_{z_{ij}} b:= \sigma^{z_{ji}}_a\big(\tau^{z_{ij}}_{\big(\sigma^{z_{ij}}_{b}\big)^{-1}\left(a\right)}\left(b\right)\big)$, for all $a,b\in X$.
\end{rem}


In the following, we provide a method for constructing $p$-shelves  and $p$-racks starting from a fixed shelf or rack. For preliminaries on left shelves, racks, and quandles, we refer the reader to \cite{FeRo92} and \cite{Shelf-history2}.

\begin{thm}\label{prop-rack-p-r}
    Let $(X, \triangleright)$ be a shelf, $I \subseteq \mathbb{Z}^+$ and, for all $i, j \in I$, let $\alpha_{ij}:X\to X$ be maps. Consider a map $\alpha: I \times I \times X \to X, \, (i, j, a) \mapsto \alpha_{ij}(a)$ such that, for all $a,b \in X$, $i, j, h \in I$,
    \begin{align}\label{cond}
        \alpha_{ih}(a)\triangleright \alpha_{jh}(b) = \alpha_{jh}(\alpha_{ij}(a)\triangleright b).
    \end{align} 
    Define, for all $a, b \in X$, $i, j \in I$,  the binary operation $a \triangleright_{z_{ij}} b := \alpha_{ij}(a) \triangleright b$, then
 the pair $\left(X, \triangleright_{z_{ij}}\right)$ is a p-shelf. Moreover, if $(X, \triangleright)$ is a rack, then $(X,  \triangleright_{z_{ij}})$ is a $p$-rack.
 \begin{proof} 
    The proof is straightforward.
 \end{proof}

\end{thm}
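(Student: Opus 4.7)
The plan is to unfold \eqref{shelf} by substituting the proposed definition $a \triangleright_{z_{ij}} b := \alpha_{ij}(a) \triangleright b$ into both sides, and then to reduce the resulting identity to the self-distributivity of the fixed shelf $(X, \triangleright)$ with a single application of the hypothesis \eqref{cond}. Concretely, first I would compute the left-hand side of the $p$-self-distributivity relation: using the definition twice gives
\[
a \triangleright_{z_{ik}} (b \triangleright_{z_{jk}} c) = \alpha_{ik}(a) \triangleright \bigl(\alpha_{jk}(b) \triangleright c\bigr).
\]
Similarly, the right-hand side becomes
\[
(a \triangleright_{z_{ij}} b) \triangleright_{z_{jk}} (a \triangleright_{z_{ik}} c) = \alpha_{jk}\bigl(\alpha_{ij}(a) \triangleright b\bigr) \triangleright \bigl(\alpha_{ik}(a) \triangleright c\bigr).
\]

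The key step is then to apply hypothesis \eqref{cond} with $h = k$, which states $\alpha_{ik}(a) \triangleright \alpha_{jk}(b) = \alpha_{jk}(\alpha_{ij}(a) \triangleright b)$, so that the right-hand side rewrites as
\[
\bigl(\alpha_{ik}(a) \triangleright \alpha_{jk}(b)\bigr) \triangleright \bigl(\alpha_{ik}(a) \triangleright c\bigr).
\]
Now the ordinary left self-distributivity of $(X, \triangleright)$, applied with the three entries $\alpha_{ik}(a)$, $\alpha_{jk}(b)$, $c$, equates this expression to $\alpha_{ik}(a) \triangleright (\alpha_{jk}(b) \triangleright c)$, matching the left-hand side. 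Hence $\left(X, \triangleright_{z_{ij}}\right)$ is a $p$-shelf.

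For the second assertion, it suffices to observe that the left translation map of the parametric operation factors through the original one, namely $L_a^{z_{ij}}(b) = \alpha_{ij}(a) \triangleright b = L_{\alpha_{ij}(a)}(b)$. If $(X, \triangleright)$ is a rack, each $L_x$ is a bijection of $X$, and consequently so is every $L_a^{z_{ij}}$; therefore $(X, \triangleright_{z_{ij}})$ is a $p$-rack. I do not expect any serious obstacle: the only mild care needed is in matching the parameter indices $i,j,k$ on both sides so that the instance $h=k$ of \eqref{cond} applies, after which everything reduces to the underlying shelf/rack axioms.
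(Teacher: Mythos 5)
Your proof is correct and is exactly the straightforward verification the paper leaves implicit: unfolding both sides of the $p$-self-distributivity, applying \eqref{cond} with $h=k$, and reducing to the self-distributivity of $(X,\triangleright)$, with the rack case following from $L_a^{z_{ij}}=L_{\alpha_{ij}(a)}$. No issues.
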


\begin{rem}\label{rem_alpha}
    Note that if the maps $\alpha_{ij}$ are endomorphisms of the shelf $(X, \triangleright)$ such that 
  \begin{align}\label{eq:comp_alpha}
        \forall\ i,j, h \in I\qquad \alpha_{ij} = \alpha_{hj}\alpha_{ih},
    \end{align} 
    then  \eqref{cond} is satisfied. 
    Conversely, if $\alpha_{ij}$ are bijective maps that satisfy \eqref{cond} and \eqref{eq:comp_alpha}, then $\alpha_{ij}$ are endomorphisms of $(X, \triangleright)$.
\end{rem}

We present some examples of $p$-shelves obtained as in \cref{prop-rack-p-r}.
\begin{exa}$ $ \label{shelves}
\begin{enumerate}
    \item[$(1)$] Let $X$ be a set, $h:X \to X$ an idempotent map, and consider the shelf $(X, \triangleright)$ defined by $a \triangleright b:=h(a)$, for all $a, b \in X$. Let $Y \subseteq X$ and, for all $z_{i,j} \in Y$, consider the map $\alpha_{ij}: X \to X$ of constant value $h\left(z_j\right)$. Then, the maps $\alpha_{ij}$ satisfy \eqref{cond} and if we define, for all $a, b \in X$, $$a \triangleright_{z_{ij}} b =h\left(z_j\right),$$ then the pair $\left(X , \triangleright_{z_{ij}}\right)$ is a p-shelf. 
    \item[$(2)$] Let $(X, +)$ be a group and consider the core quandle $(X, \triangleright)$ defined by $a \triangleright b:=a-b+a$, for all $a, b \in X$. Then, for all $z_{i,j} \in Y \subseteq X$, the maps $\alpha_{ij}: X \to X, \, a \mapsto a+z_i-z_j$ satisfy the assumptions in \cref{rem_alpha}. Thus, $\left(X\, ,\triangleright_{z_{ij}} \right)$ is a $p$-rack, with $$a \triangleright_{z_{ij}} b=a+z_i-z_j-b+a+z_i-z_j,$$
    for all $a, b \in X$ and $z_{i, j} \in Y$.
    \item[$(3)$] Let $\left(X, +\right)$ be a group and $f$ a heap endomorphism of $\left(X, +\right)$ 
    or a metahomomorphism of $\left(X, +^{op}\right)$. Then, the structure $\left(X,\triangleright\right)$ with $a \triangleright b:=-f(a)+f(b)+a$, for all $a, b \in X$, is a shelf (for more details, see \cite[Example 3.14]{DoRySt}). Let $\alpha_{ij}:X\to X$ be an endomorphism of $\left(X, \triangleright\right)$ such that, for all $i, j, h\in I$, $\alpha_{ij}=\alpha_{hj}\alpha_{ih}$ (for instance, an endomorphism of $(X, +)$ such that $f\alpha_{ij} = \alpha_{ij}f$ and $\alpha_{ij}=\alpha_{hj}\alpha_{ih}$, for all $i, j, h\in I$). Then, by \cref{rem_alpha}, the structure $\left(X, \triangleright_{z_{ij}}\right)$ with, for all $a, b \in X$ and $i, j \in I$,
    \begin{align*}
       a \triangleright_{z_{ij}} b=-f\left(\alpha_{ij}(a)\right)+f(b)+\alpha_{ij}(a)
    \end{align*}
 is a $p$-shelf that is a $p$-rack if and only if $f$ is bijective. 
\end{enumerate}

\end{exa}

It is clear that \cref{prop-rack-p-r} cannot provide parametric $p$-shelves starting from the \emph{trivial rack} $\left(X, \triangleright\right)$ defined by $a\triangleright b= b$, for all $a,b\in X$.
The following is an example of $p$-shelf that could be obtained by deforming the trivial one, but not as in \cref{prop-rack-p-r}.
\begin{exa}\label{ex_semigroup}
     Let $X$ be a semigroup, $Y \subseteq X$, and set $a \triangleright_{z_{ij}} b:= b z_j$, for all $a, b \in X$ and $z_{i,j} \in Y$. Then $\left(X, \triangleright_{z_{ij}}\right)$ is a $p$-shelf. 
\end{exa}

The following provides a simple way to obtain $p$-shelves, including also the example in \cref{ex_semigroup} and deformations of the trivial rack.
\begin{pro}\label{prop-inv}
    Let $I \subseteq \mathbb{Z}^+$ and, for all $i, j \in I$, let $\beta_{ij}:X\to X$ be maps. Define, for all $a, b \in X$, $i, j \in I$,  the binary operation $a \triangleright_{z_{ij}} b := \beta_{ij}(b)$. Then, $\left(X, \triangleright_{z_{ij}}\right)$ is a $p$-shelf if and only if 
    \begin{align}\label{eq:prop-inv}
        \beta_{ik}\beta_{jk} = \beta_{jk}\beta_{ik},
    \end{align}
    for all $i,j,k\in I$. In particular, $(X, \triangleright_{z_{ij}})$ is a $p$-rack if and only if $\beta_{ij}$ is bijective, for all $i,j\in I$.
\end{pro}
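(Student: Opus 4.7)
The plan is to substitute the definition $a\triangleright_{z_{ij}} b := \beta_{ij}(b)$ directly into the generalized left $p$-self-distributivity law \eqref{shelf} and observe that, since the binary operation depends only on its second argument, both sides collapse to simple compositions of the $\beta$-maps applied to $c$. This is essentially a routine expansion, so I do not expect any real obstacle.

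More concretely, first I would compute the left-hand side of \eqref{shelf}:
\begin{equation*}
a\triangleright_{z_{ik}}\bigl(b\triangleright_{z_{jk}} c\bigr) = a\triangleright_{z_{ik}} \beta_{jk}(c) = \beta_{ik}\bigl(\beta_{jk}(c)\bigr).
\end{equation*}
Then I would expand the right-hand side:
\begin{equation*}
\bigl(a\triangleright_{z_{ij}} b\bigr)\triangleright_{z_{jk}}\bigl(a\triangleright_{z_{ik}} c\bigr) = \beta_{ij}(b)\triangleright_{z_{jk}} \beta_{ik}(c) = \beta_{jk}\bigl(\beta_{ik}(c)\bigr).
\end{equation*}
The equality of these for all $c\in X$ and all $i,j,k\in I$ is exactly the commutation relation \eqref{eq:prop-inv}. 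Both implications (necessity and sufficiency) fall out of this identification since all the other arguments $a,b$ disappear from the computation.

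For the final assertion, I would invoke \cref{pp}: the pair $(X,\triangleright_{z_{ij}})$ is a $p$-rack precisely when every left translation $L^{z_{ij}}_a(b) = a\triangleright_{z_{ij}} b = \beta_{ij}(b)$ is bijective. Because $L^{z_{ij}}_a = \beta_{ij}$ is independent of $a$, this bijectivity condition is equivalent to the bijectivity of each $\beta_{ij}$, concluding the proof.
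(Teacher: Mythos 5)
Your computation is correct and is exactly the routine expansion the paper has in mind when it declares the proof ``straightforward'': substituting $a\triangleright_{z_{ij}}b=\beta_{ij}(b)$ into \eqref{shelf} collapses both sides to $\beta_{ik}\beta_{jk}(c)$ and $\beta_{jk}\beta_{ik}(c)$, giving the equivalence with \eqref{eq:prop-inv}, and the $p$-rack claim follows since $L_a^{z_{ij}}=\beta_{ij}$ independently of $a$. No issues.
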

\begin{proof} 
    The proof is straightforward.
\end{proof}

\begin{exa} $ $
    \begin{enumerate}
    \item[$(1)$] Let $X$ be a semigroup and $Y \subseteq X$. Then the maps $\beta_{ij}:X \to X$, $a \mapsto az_{j}$ trivially satisfy \eqref{eq:prop-inv}, hence the binary operation $a \triangleright_{z_{ij}} b=bz_j$ provides the $p$-shelf in \cref{ex_semigroup}. 
    \item[$(2)$] Let $\left(X, +\right)$ be a semigroup and $Y\subseteq X$ such that $z_{i} + z_{j} = z_{j} + z_{i}$, for all $z_{i, j}\in Y$. Then the maps $\beta_{ij}:X\to X, a\mapsto a + z_{i} + z_{j}$, for every $a\in X$, satisfy \eqref{eq:prop-inv} and if we define, for all $a,b\in X$,     
    $a\triangleright_{z_{ij}} b = \beta_{ij}\left(b\right) = b + z_{i} + z_{j}$, we obtain that $\left(X, \triangleright_{z_{ij}}\right)$ is a $p$-shelf that is a $p$-rack if $\left(X, +\right)$ is a group.
    \end{enumerate}  
\end{exa}

The examples of $p$-racks presented next are obtained using the algebraic structures of skew braces. We recall the definition of skew braces below (see \cite{Ru05, GuaVen}).
\begin{defn}\label{skew_defn}
A \emph{skew left brace} is a set $B$ together with two group operations $+,\circ :B\times B\to B$ such that for all $ a,b,c\in B$,
\begin{equation}\label{def:dis}
a\circ (b+c)=a\circ b-a+a\circ c,
\end{equation}
or equivalently,
\begin{equation}\label{def:dis2}
a\circ (b+c)=a\circ b + a\circ\left(a^{-1} + c\right),
\end{equation}
where $a^{-1}$ denotes the inverse of $a$ with respect to the $\circ$ operation. If $(B, +)$ is an abelian group, then $B$ is called 
\emph{left brace}. As usual, the first operation is called \emph{addition} and the second one is called \emph{multiplication}.
\end{defn}
From now on, whenever we say skew brace we mean a skew left brace. Moreover, it is easy to check that the additive and multiplicative identities of a skew brace $B$ coincide and so it will be denoted by $0$.

Clearly, every group $(B, +)$ gives rise to two skew braces $(B, +, +)$ and $\left(B, +, +^{op}\right)$, called the \emph{trivial} and \emph{almost trivial skew brace}, respectively. Some more examples of braces are presented below. 
\begin{exa}\label{ex:fractions} $ $
\begin{enumerate}
    \item[$(1)$] Let $\mathrm{Odd}:= \big\{\frac{2n+1}{2k+1}\ |\ n,k\in\mathbb{Z}\big\}$ together with two binary operations defined by, for all $a, b \in \mathrm{Odd}$,
$a \ {+}_{1} \ b:= a-1+b$
    and $a \ {\circ}_{1} \ b:= a\cdot b$,
 where $+$ and $\cdot$ are the addition and the multiplication 
of rational numbers, respectively. Then, the triple $(\mathrm{Odd},+_1,\circ)$ is a brace. 
\vspace{1mm}
\item[$(2)$] Let $\mathrm{U}(\mathbb{Z}/2^n\mathbb{Z})$ denote the set of invertible integers modulo $2^n$, for some $n\in \mathbb{N}$. 
Then, the triple $(\mathrm{U}\left(\mathbb{Z}/2^n\mathbb{Z}),+_1,\circ\right)$ is a brace where, for all
$a,b\in \mathrm{U}(\mathbb{Z}/2^n\mathbb{Z})$, $a+_1b:=a-1+b$, with $+$ and $\circ $ the addition and multiplication of integer numbers modulo $2^n$, respectively. 
\end{enumerate}
\end{exa}

The following are examples of $p$-racks obtained using the construction method provided in \cref{prop-rack-p-r}. 
\begin{exa}\label{exs_p_shelves}
Let $(X, +, \circ)$ be a skew brace and $(X,\, \triangleright)$ the conjugation quandle, that is, the quandle given by $a \triangleright b = -a + b + a$, for all $a, b \in X$.
Consider $Y\subseteq X$ such that, for all $a,b \in X,$ $z\in Y,$ $(a+b)\circ z = a \circ z -z +b \circ z,$ and every $z \in Y$ is central in $(X, +).$
Besides, for  all $z_{i,j}\in Y$,  the map $\alpha_{ij}:X\to X, \, a\mapsto a \circ z_i\circ z_j^{-1}$ satisfies \eqref{cond} and if we define, for all $a, b \in X$ and $z_{i,j} \in Y$, $$a \triangleright_{{ij}} b =-a\circ z_i \circ z_j^{-1} + b + a \circ z_i \circ z_j^{-1},$$ then $\left(X\, , \triangleright_{z_{ij}} \right)$ is a $p$-rack. 
\end{exa}

\begin{rem}
Let $(X, +, \circ)$ be a skew brace. Note that the set $Y$ in \cref{exs_p_shelves} is a subset of $\mathcal{D}(X) \cap Z(X, +)$, where $Z(X, +)$ is the center of the group $(X, +)$ and
\begin{align*}
    \mathcal{D}(X)=\{z \in X \, \vert \, \forall\ a,b \in X \, \, (a+b) \circ z= a \circ z- z + b \circ z\}
\end{align*}
is the \emph{right distributor of $X$}, introduced in \cite{MaRySt24}, that is a subgroup of $(X, \circ)$.
Let us note that if $Y\neq\emptyset$, since $Z(X,+)$ is a subgroup of the group $(X,\,\circ)$, then $\left(Y,\,\circ\right)$ is a group.
\end{rem}





More generally, we can derive the following example. 

\begin{exa} \label{ex_rack_skew}
    Let $(X,+, \circ)$ be a skew brace, $Y \subseteq \mathcal{D}(X) \cap Z(X, +)$, and $\xi \in X$. 
     Define for  all $z_{i,j}\in Y$, $a,b \in X,$
$$a \triangleright_{z_{ij}} b := -\xi \circ a\circ z_i \circ z_j^{-1}  + \xi \circ b + a \circ z_i \circ z_j^{-1}.$$ 
Then, $\left(X, \triangleright_{z_{ij}}\right)$ is a $p$-rack belonging to the class of $p$-racks in $(3)$  of \cref{shelves} (by choosing $f:X\to X$ the map given by $f\left(a\right) = \xi\circ a$, for all $a \in X$, and $\alpha_{ij}(a)=a \circ z_i \circ z_j^{-1}$).
\end{exa}
Evidently, the $p$-rack in \cref{exs_p_shelves} belongs to the class of $p$-racks in \cref{ex_rack_skew}, by choosing $\xi=0$.

\section{Reflections from \texorpdfstring{$p$-}{}shelves and \texorpdfstring{$p$-}{}racks}

The following results are consistent with those obtained in \cite{AlMaSt24x} in the non-parametric case.
\begin{pro}\label{lemma_generale}
    Let $(X, \triangleright_{z_{ij}})$ be a $p$-shelf, $Y\subseteq X$, and $ R^{z_{ij}}: X \times X \to X \times X$ the $p$-shelf solution defined by $R^{z_{ij}}(b, a) =(b, b \triangleright_{z_{ij}} a)$, for all $a, b \in X$ and $z_{i,j} \in Y$. Let also $K^{z}: X\to X$ such that $a\mapsto \kappa^z(a),$ for all $z \in Y.$ Then $K^z$ is a  reflection if and only if 
\begin{enumerate}
    \item[$(i)$] $\big(\kappa^{z_i}(b\triangleright_{z_{\bar{j}\bar{i}}} a)\big)\triangleright_{z_{ij}} \kappa^{z_j}(b)
    = \kappa^{z_j}\big((\kappa^{z_i}(b\triangleright_{z_{\bar{j}\bar{i}}} a))\triangleright_{z_{i\bar{j}}}b\big)$,
    \vspace{1mm}
    
    \item[$(ii)$] $\kappa^{z_i}((\kappa^{z_j}(b))\triangleright_{z_{j\bar{i}}} a) = \kappa^{z_i}(b\triangleright_{z_{\bar{j}\bar{i}}} a)$,
\end{enumerate}
for all $a,b\in X,$ $z_{i,j}\in Y$.
\begin{proof} The proof is direct by equating the left and right hand sides of the reflection equation \eqref{reflec_eq}.
\end{proof}
\end{pro}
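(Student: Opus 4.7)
The plan is to reduce the reflection equation to the two stated conditions by a direct componentwise expansion of each side of \eqref{reflec_eq} applied to an arbitrary pair $(a,b) \in X \times X$, using the explicit $p$-shelf $R$-map $R^{z_{ij}}(b,a) = (b,\, b \triangleright_{z_{ij}} a)$. With the standard convention $R_{21}^{z_{ij}} = P R^{z_{ij}} P$ (where $P$ swaps the two factors), one has $R_{12}^{z_{ij}}(a,b) = (a,\, a \triangleright_{z_{ij}} b)$ and $R_{21}^{z_{ij}}(a,b) = (b \triangleright_{z_{ij}} a,\, b)$, while $K_i^{z}$ acts pointwise by $\kappa^{z}$ on the $i$-th coordinate.

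First I would unwind the left-hand side $R_{12}^{z_{12}} K_1^{z_1} R_{21}^{z_{2\bar 1}} K_2^{z_2}(a,b)$ step by step, obtaining successively $(a,\, \kappa^{z_2}(b))$, then $(\kappa^{z_2}(b) \triangleright_{z_{2\bar 1}} a,\, \kappa^{z_2}(b))$, then $(\kappa^{z_1}(\kappa^{z_2}(b) \triangleright_{z_{2\bar 1}} a),\, \kappa^{z_2}(b))$, and finally the pair whose first entry is $\kappa^{z_1}(\kappa^{z_2}(b) \triangleright_{z_{2\bar 1}} a)$ and whose second entry is $\kappa^{z_1}(\kappa^{z_2}(b) \triangleright_{z_{2\bar 1}} a) \triangleright_{z_{12}} \kappa^{z_2}(b)$. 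Unwinding the right-hand side $K_2^{z_2} R_{12}^{z_{1\bar 2}} K_1^{z_1} R_{21}^{z_{\bar 2 \bar 1}}(a,b)$ analogously yields a pair with first entry $\kappa^{z_1}(b \triangleright_{z_{\bar 2 \bar 1}} a)$ and second entry $\kappa^{z_2}\big(\kappa^{z_1}(b \triangleright_{z_{\bar 2 \bar 1}} a) \triangleright_{z_{1\bar 2}} b\big)$.

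Equating the first components on the two sides is exactly condition $(ii)$ with $i=1$, $j=2$; substituting that identity into the second component of the left-hand side and then equating with the second component of the right-hand side produces condition $(i)$. The converse direction simply reverses this substitution: assuming $(i)$ and $(ii)$, the two component equalities hold and hence so does \eqref{reflec_eq} on $(a,b)$. Since $(a,b)$ and $z_1, z_2$ are arbitrary, this proves the equivalence.

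The only real obstacle is the careful bookkeeping of the four parameter labels $z_1,\, z_2,\, z_{\bar 1},\, z_{\bar 2}$ that get attached to each $R$-factor and to each occurrence of $\triangleright$, combined with fixing the convention for $R_{21}$; notice that involutivity of $\mu$ is not actually invoked in either direction. Beyond this purely notational hurdle the verification is mechanical, which is why the author's proof can be dispatched in a single line.
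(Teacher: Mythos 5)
Your proposal is correct and is exactly the computation the paper compresses into its one-line proof: expanding both sides of \eqref{reflec_eq} on a generic pair, equating first components to get $(ii)$, and then using $(ii)$ to reduce the second-component equality to $(i)$. The bookkeeping of the parameter labels and the $R_{21}=\pi R\,\pi$ convention all check out against the paper's conventions, so nothing further is needed.
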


\begin{pro}\label{shelvesf}
Let $(X, \triangleright_{z_{ij}})$ be a $p$-shelf, $Y\subseteq X$, and $ R^{z_{ij}}: X \times X \to X \times X$ the $p$-shelf solution defined by $R^{z_{ij}}(b, a) =(b, b \triangleright_{z_{ij}} a)$, for all $a, b \in X$ and $z_{i,j} \in Y$. Let also $K^{z}:X\to X$ a bijective map such that $a\mapsto \kappa^z(a),$ for all $z \in Y.$ Then $K^z$ is a reflection if and only if 
\begin{enumerate}
    \item[$(i)$] $~\big(b \triangleright_{z_{\bar ji}} a \big)\triangleright_{z_{ij}} \kappa^{z_j}(b) = \kappa^{z_j}\big(\big(b \triangleright_{z_{\bar ji}} a \big) \triangleright_{z_{i\bar j}} b\big)$,
    \vspace{1mm}
    
    \item[$(ii)$] $~\kappa^{z_i}(a) \triangleright_{z_{ ij}} b =a \triangleright_{z_{\bar i j}} b,$
\end{enumerate}
for all $a,b\in X,$ $z_{i,j}\in Y$.
\begin{proof}
   The proof is a direct consequence of \cref{lemma_generale}. 
\end{proof}
\end{pro}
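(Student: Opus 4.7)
The plan is to deduce the statement from \cref{lemma_generale} by exploiting the added hypothesis that $\kappa^{z}$ is bijective for every $z\in Y$. I would begin with condition (ii) of \cref{lemma_generale}, namely
\[
\kappa^{z_i}\bigl(\kappa^{z_j}(b)\triangleright_{z_{j\bar i}} a\bigr) \;=\; \kappa^{z_i}\bigl(b\triangleright_{z_{\bar j\bar i}} a\bigr),
\]
and cancel the outer $\kappa^{z_i}$ to obtain $\kappa^{z_j}(b)\triangleright_{z_{j\bar i}} a = b\triangleright_{z_{\bar j\bar i}} a$. Renaming the dummy indices (replacing $j$ by $i$ and $\bar i$ by $j$, which is legitimate because $\mu\colon Y\to Y$ is a bijection, and swapping the names $a\leftrightarrow b$) then reproduces claim (ii) of the proposition verbatim.

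Next I would feed this simplified identity back into condition (i) of \cref{lemma_generale}. Setting $c:=b\triangleright_{z_{\bar j\bar i}} a$, the derived identity reads $\kappa^{z_i}(c)\triangleright_{z_{ij}} d = c\triangleright_{z_{\bar i j}} d$ for every $d\in X$. Applying it with $d=\kappa^{z_j}(b)$ clears the inner $\kappa^{z_i}$ on the left-hand side of (i), while applying it with $d=b$ and indices $(i,\bar j)$ clears the inner $\kappa^{z_i}$ on the right-hand side, so that (i) becomes
\[
(b\triangleright_{z_{\bar j\bar i}} a)\triangleright_{z_{\bar i j}}\kappa^{z_j}(b) \;=\; \kappa^{z_j}\bigl((b\triangleright_{z_{\bar j\bar i}} a)\triangleright_{z_{\bar i\bar j}} b\bigr).
\]
A final relabelling $\bar i\to i$ (i.e.\ using the involutivity of $\mu$, which gives $\bar{\bar i}=i$) converts this into claim (i).

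For the converse, every step above is reversible: the cancellation of $\kappa^{z_i}$ can be undone by applying $\kappa^{z_i}$ to both sides, and substituting claim (ii) back into claim (i) recovers condition (i) of \cref{lemma_generale}. Hence claims (i) and (ii) of the proposition imply the hypotheses of \cref{lemma_generale}, and therefore $K^z$ is a reflection. The only real subtlety is the consistent bookkeeping of $\mu$-conjugate indices once the involutivity of $\mu$ is invoked; no further structural input beyond the $p$-shelf axiom and the bijectivity of $K^z$ is needed.
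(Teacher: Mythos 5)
Your proof is correct and follows exactly the route the paper intends: it deduces the statement from \cref{lemma_generale} by cancelling the outer $\kappa^{z_i}$ in condition (ii) via injectivity, substituting the resulting identity into condition (i), and relabelling parameters through the involution $\mu$; the index bookkeeping checks out in both directions. The paper's proof is stated only as ``a direct consequence of \cref{lemma_generale},'' and your argument supplies precisely the omitted details.
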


\begin{cor}\label{rackf}
Let $(X, \triangleright_{z_{ij}})$ be a $p$-rack 
$Y\subseteq X$, and $ R^{z_{ij}}: X \times X \to X \times X$ the $p$-rack solution defined by $R^{z_{ij}}(b, a) =(b, b \triangleright_{z_{ij}} a)$, for all $a, b \in X$ and $z_{i,j} \in Y$. Let also $K^{z}: X\to X$ a bijective map such that $a\mapsto \kappa^z(a),$ for all $z \in Y.$ Then $K^z$ is a reflection if and only if 
\begin{enumerate}
    \item[$(i)$] $~a \triangleright_{z_{ij}} \kappa^{z_j}(b) = \kappa^{z_j}(a \triangleright_{z_{i\bar j}} b)$,
    \item[$(ii)$] $~\kappa^{z_i}(a) \triangleright_{z_{ ij}} b =a \triangleright_{z_{\bar i j}} b,$
\end{enumerate}
for all $a,b\in X,$ $z_{i,j}\in Y$.
\begin{proof}
   The proof is a direct consequence of \cref{shelvesf}, using that $L_a^{z_{ij}}$ is a bijection.
\end{proof}
\end{cor}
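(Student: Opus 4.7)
The plan is to derive the corollary directly from \cref{shelvesf}, exploiting the additional bijectivity available in the $p$-rack case. Condition (ii) of the corollary is identical to condition (ii) of \cref{shelvesf}, so nothing has to be done there; the only content is the passage from condition (i) of \cref{shelvesf} to condition (i) of the corollary.

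For that equivalence, I would use that since $(X,\triangleright_{z_{ij}})$ is a $p$-rack, for each fixed $b\in X$ and each $z_{\bar j i}\in Y$ the left translation $L_b^{z_{\bar j i}}\colon X\to X$, $a\mapsto b\triangleright_{z_{\bar j i}} a$, is a bijection. Hence as $a$ ranges over $X$, the element $b\triangleright_{z_{\bar j i}} a$ also ranges over all of $X$. Setting $a':= b\triangleright_{z_{\bar j i}} a$ in condition (i) of \cref{shelvesf} turns it into
\[
a' \triangleright_{z_{ij}} \kappa^{z_j}(b) = \kappa^{z_j}\bigl(a' \triangleright_{z_{i\bar j}} b\bigr),
\]
which is exactly condition (i) of the corollary. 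Conversely, assuming condition (i) of the corollary for all $a',b\in X$, one can specialize $a' = b\triangleright_{z_{\bar j i}} a$ to recover condition (i) of \cref{shelvesf}.

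There is really no obstacle to overcome: the proof is a one-line substitution argument made legitimate by the bijectivity of $L_b^{z_{\bar j i}}$. The only thing to double-check is that the substitution preserves the universal quantification over both variables $a$ and $b$ — and this is automatic because the bijectivity holds uniformly in $b$ and in the parameters.
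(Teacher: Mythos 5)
Your proof is correct and is essentially the paper's own argument: the paper likewise deduces the corollary from \cref{shelvesf} by invoking the bijectivity of $L_b^{z_{\bar j i}}$, which lets the composite expression $b\triangleright_{z_{\bar j i}}a$ be replaced by a free variable in condition (i), while condition (ii) is unchanged.
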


\smallskip

\begin{exa}\label{Example-refl} 
Let $(X, +, \circ)$ be a skew brace, $Y\subseteq \mathcal{D}(X)\cap Z(X, +)$,  and $\mu: Y\to Y$ the map given by $\mu(z_i) = z_i^{-1}$, for all $z_i\in Y$, hence we write $z_{\bar i } = z_i^{-1}$.
Consider the $p$-rack solution $R^{z_{ij}}$ for the $p$-rack in \cref{ex_rack_skew},
i.e., $a\triangleright_{z_{ij}} b = -\xi \circ a \circ z_i \circ z_j^{-1} + \xi\circ b  + a \circ z_i \circ z_j^{-1}$, for all $a, b \in X, z_{i,j} \in Y$. Let also $\zeta \in Z(X, +)$ such that  $\xi \circ \left(m\zeta\right) =\xi+ m\zeta,$ with $m \in \mathbb{Z}$. 
Then, for all $z \in Y$, the map $K^{z}:X \to X$ defined by $\kappa^z(a) = a\circ z^{-1}\circ z^{-1} +m\zeta$, for all $a \in X$, is a bijective reflection.
\end{exa}

In the following, we provide reflections for the $p$-shelves obtained as in \cref{prop-rack-p-r}. For the remaining of the subsection we introduce the short-hand notation $f_i$ for any map $f_i: X \to X$ that depends on $z_i \in Y.$

\begin{thm}\label{refl-prop-rack-p-r}
    Let $\left(X, \triangleright_{z_{ij}}\right)$ be a $p$-shelf as in \cref{prop-rack-p-r} with $\alpha_{ij}: X \to X$ bijective maps. Let $t_i, \kappa_i:X\to X$ be bijective maps such that $\kappa_{i} = t_i\,\alpha_{\bar{i}i}$, for all $i \in I$. Then, the map $K_i:X\to X$ such that $a\mapsto\kappa_i\left(a\right)$, for all $i\in I$, is a bijective reflection if and only if
    \begin{enumerate}
        \item[$(i)$] $t_i\left(\left(b\triangleright a\right) \triangleright b\right) = (b \triangleright a) \triangleright t_i\left(b\right)$, 
        \item[$(ii)$] 
        $\alpha_{\bar{i}j}\alpha^{-1}_{\bar{i}i}\left(a\right)\triangleright b = \alpha_{ij}t_i\left(a\right)\triangleright b$,
    \end{enumerate}
    for all $a,b\in X$ and $i\in I$.
    \begin{proof}
      To get the proof, let us use \cref{shelvesf}. If $a, b \in X$, $i, j \in I$,  note that, by the definitions, $\kappa_i(a) \triangleright_{z_{ ij}} b =
\alpha_{ij}(t_i\alpha_{\bar{i}i}(a))\triangleright b$ \ and \
$a \triangleright_{z_{\bar i j}} b
= \alpha_{\bar{i}j}(a)\triangleright b$. Hence, by the bijectivity of $\alpha_{\bar{i}i}$, we obtain the equivalence with condition (ii). Now, observe that, by \eqref{cond},
\begin{align*}
    ~\big(b \triangleright_{z_{\bar ji}} a \big)\triangleright_{z_{ij}} \kappa_j(b) 
    = (\alpha_{\bar{j}j}(b)\triangleright\alpha_{ij}(a))\triangleright(t_j\alpha_{\bar{j}j}(b))
\end{align*}
and
\begin{align*}
~\kappa_j\big(\big(b \triangleright_{z_{\bar ji}} a \big) \triangleright_{z_{i\bar j}} b\big)
&= t_j\alpha_{\bar{j}j}\big(\alpha_{i\bar{j}}(\alpha_{\bar ji}(b)\triangleright a)\triangleright b\big)\\
&= t_{j}\left(\alpha_{ij}\left(\alpha_{\bar ji}(b)\triangleright a\right)\triangleright \alpha_{\bar{j}j}(b)\right ) &\mbox{by \eqref{cond}}\\
& = t_j\left(\left(\alpha_{\bar{j}j}(b)\triangleright \alpha_{ij}(a)\right)\triangleright \alpha_{\bar{j}j}(b)\right)&\mbox{by \eqref{cond}}
\end{align*}
Therefore, by the bijectivity of $\alpha_{ij}$ the claim follows.
\end{proof}
\end{thm}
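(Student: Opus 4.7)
My plan is to apply Proposition \ref{shelvesf} and then systematically translate its two conditions into the setting where $a \triangleright_{z_{ij}} b = \alpha_{ij}(a) \triangleright b$ and $\kappa_i = t_i\,\alpha_{\bar i i}$. Since $\kappa_i$ is assumed bijective (and $L_a^{z_{ij}}$ need not be for a mere $p$-shelf), I should actually lean on Proposition \ref{shelvesf} rather than the rack corollary, which is strictly appropriate here.

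For condition (ii) of \cref{shelvesf}, I would unpack
\begin{align*}
\kappa^{z_i}(a)\triangleright_{z_{ij}} b &= \alpha_{ij}\bigl(t_i\alpha_{\bar i i}(a)\bigr)\triangleright b, \\
a\triangleright_{z_{\bar i j}} b &= \alpha_{\bar i j}(a)\triangleright b,
\end{align*}
and then, using that $\alpha_{\bar i i}$ is bijective, replace $a$ by $\alpha_{\bar i i}^{-1}(a)$ on the right-hand side to obtain exactly condition (ii) of the theorem.

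For condition (i) of \cref{shelvesf}, the strategy is to push every $\alpha_{ij}$ through a $\triangleright$ using the compatibility \eqref{cond}, namely $\alpha_{ih}(a)\triangleright \alpha_{jh}(b) = \alpha_{jh}(\alpha_{ij}(a)\triangleright b)$, which rearranges as $\alpha_{jh}(\alpha_{ij}(a)\triangleright b) = \alpha_{ih}(a)\triangleright \alpha_{jh}(b)$. Applied with the appropriate relabellings, the left-hand side $\alpha_{ij}(\alpha_{\bar j i}(b)\triangleright a)\triangleright t_j\alpha_{\bar j j}(b)$ becomes $(\alpha_{\bar j j}(b)\triangleright \alpha_{ij}(a))\triangleright t_j\alpha_{\bar j j}(b)$, and the right-hand side $t_j\alpha_{\bar j j}\bigl(\alpha_{i\bar j}(\alpha_{\bar j i}(b)\triangleright a)\triangleright b\bigr)$ becomes $t_j\bigl((\alpha_{\bar j j}(b)\triangleright\alpha_{ij}(a))\triangleright\alpha_{\bar j j}(b)\bigr)$ after two consecutive applications of \eqref{cond}. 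Finally, since $\alpha_{\bar j j}$ and $\alpha_{ij}$ are bijective, I can substitute $B:=\alpha_{\bar j j}(b)$, $A:=\alpha_{ij}(a)$ as free variables over $X$, reducing the equation exactly to condition (i) of the theorem.

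The main obstacle I foresee is purely combinatorial: the compatibility \eqref{cond} has three index slots that are permuted differently on the two sides, so one has to be careful about which triple $(i,j,h)$ to apply at each step. In particular, on the right-hand side the innermost rewrite uses \eqref{cond} with the $i,\bar j,j$ pattern, while the outer one uses essentially the same pattern as on the left-hand side; a wrong choice produces indices like $\alpha_{\bar j\bar j}$ or $\alpha_{ii}$ that are not controlled. Once the index bookkeeping is done correctly, the reduction to (i) and the use of bijectivity to promote $A,B$ to arbitrary elements of $X$ are immediate, and the converse direction is automatic because every step in the reduction is an equivalence.
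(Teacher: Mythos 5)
Your proposal is correct and follows essentially the same route as the paper's proof: reduce to the two conditions of \cref{shelvesf} (rightly chosen over the rack corollary, since only $\kappa_i$ is assumed bijective), obtain (ii) by unpacking the definitions and substituting $a\mapsto\alpha_{\bar i i}^{-1}(a)$, and obtain (i) by applying \eqref{cond} with the patterns $(\bar j,i,j)$ and $(i,\bar j,j)$ to rewrite both sides in terms of $B=\alpha_{\bar j j}(b)$ and $A=\alpha_{ij}(a)$, which range over all of $X$ by bijectivity. The index bookkeeping you flag is exactly the only delicate point, and your choices match the paper's.
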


\begin{cor}\label{cor_rack}
    Let $\left(X, \triangleright_{z_{ij}}\right)$ be a $p$-rack as in \cref{prop-rack-p-r} with $\alpha_{ij}: X \to X$ bijective maps. Let $t_i, \kappa_i:X\to X$ be bijective maps such that $\kappa_{i} = t_i\,\alpha_{\bar{i}i}$, for all $i \in I$. Then, the map $K_i:X\to X$ such that $a\mapsto\kappa_i\left(a\right)$, for all $i\in I$, is a bijective reflection if and only if
    \begin{enumerate}
        \item[$(i)$] $t_i\left(a\triangleright b\right) = a\triangleright t_i\left(b\right)$, 
        \item[$(ii)$] 
        $\alpha_{\bar{i}j}\alpha^{-1}_{\bar{i}i}\left(a\right)\triangleright b = \alpha_{ij}t_i\left(a\right)\triangleright b$,
    \end{enumerate}
    for all $a,b\in X$ and $i\in I$.
\end{cor}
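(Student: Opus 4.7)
The plan is to deduce the corollary directly from Theorem \ref{refl-prop-rack-p-r}, exploiting the extra hypothesis that defines a $p$-rack (equivalently, the underlying shelf $(X,\triangleright)$ is a rack): for every $b\in X$, the left multiplication $L_b : X \to X$, $a\mapsto b\triangleright a$, is a bijection.

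First, I would invoke Theorem \ref{refl-prop-rack-p-r} for the given data $(X,\triangleright_{z_{ij}})$, obtaining that $K_i$ is a bijective reflection if and only if
\begin{enumerate}
\item[$(i')$] $t_i\bigl((b\triangleright a)\triangleright b\bigr)=(b\triangleright a)\triangleright t_i(b)$,
\item[$(ii')$] $\alpha_{\bar i j}\alpha_{\bar i i}^{-1}(a)\triangleright b = \alpha_{ij}t_i(a)\triangleright b$,
\end{enumerate}
hold for all $a,b\in X$ and $i\in I$. Condition $(ii')$ is identical to condition (ii) of the corollary, so no work is needed there.

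For condition $(i')$, the key observation is that for each fixed $b\in X$ the bijectivity of $L_b$ means the element $b\triangleright a$ runs through all of $X$ as $a$ does. Substituting $c := b\triangleright a$ rewrites $(i')$ as
\[
t_i(c\triangleright b) = c\triangleright t_i(b)\qquad \text{for all } c,b\in X,
\]
which is precisely condition (i) of the corollary. The converse implication is trivial: any $c\in X$ can be written as $b\triangleright a$ for a suitable $a$, so (i) of the corollary specializes back to $(i')$.

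No significant obstacle is expected: the argument is a routine re-indexing using the bijectivity of $L_b$, and the theorem already carries the bulk of the computational content through the parametric conditions from Proposition \ref{shelvesf} and the assumptions on $\kappa_i = t_i\alpha_{\bar i i}$ and $\alpha_{ij}$.
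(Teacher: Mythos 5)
Your proposal is correct and is exactly the intended derivation: the paper states \cref{cor_rack} without proof as an immediate consequence of \cref{refl-prop-rack-p-r}, and your re-indexing $c:=b\triangleright a$ using the surjectivity of $L_b$ (which holds since the $p$-rack condition together with the bijectivity of the $\alpha_{ij}$ forces the underlying shelf $(X,\triangleright)$ to be a rack) is precisely the step that turns condition $(i)$ of the theorem into condition $(i)$ of the corollary. Nothing further is needed.
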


\begin{exa}\label{exa_refle}
    The map $K_i:X \to X$ in \cref{Example-refl} can be constructed as in \cref{cor_rack} with $t_i: X \to X$ given by 
$t_i\left(a\right) = a + m\zeta$, for all $a\in X$, where $\zeta\in Z(X, +)$ and $m\in\mathbb{Z}$, and 
$\alpha_{i j}: X \to X$  such that $\alpha_{i  j}(a)= a \circ z_i\circ z_j^{-1}$,
for all $a \in X$ and $z_{i, j} \in Y$, as in \cref{exs_p_shelves}. In this case, $\alpha_{ij}^{-1} = \alpha_{ji}$ and $\alpha_{ji}\alpha_{hj} = \alpha_{hi}$ (hence $\alpha_{ii} = \id_X$). 
\end{exa}

As an application of \cref{refl-prop-rack-p-r} and \cref{cor_rack}, we show how to obtain reflections on $p$-racks starting from reflections on the starting shelves. In this case, maps satisfying \eqref{eq:comp_alpha} can be involved in this objective.

\begin{pro}\label{prop_reflec_Start_shelf}
 Let $\left(X, \triangleright_{z_{ij}}\right)$ be a $p$-shelf (or $p$-rack) as in \cref{prop-rack-p-r} with $\alpha_{ij}: X \to X$ bijective maps satisfying \eqref{eq:comp_alpha}. Let $\kappa: X \to X$ be a bijective reflection for the shelf (or rack) $(X, \triangleright)$ such that $\kappa\alpha_{ij}=\alpha_{ij}\kappa$, for all $i, j \in I$. Then,  the map $K_i:X\to X$ such that $a\mapsto\kappa \alpha_{\bar{i}i}\left(a\right)$, for all $i\in I$, is a bijective reflection.  
   \begin{proof}
       Initially, by \cite[Theorem 2.1]{AlMaSt24x} 
      and bijectivity, the map $\kappa: X \to X$ is a reflection for $(X, \triangleright)$ if and only if, for all $a, b \in X$,
       \begin{align*}
           a \triangleright b= \kappa(a) \triangleright b \qquad \text{and} \qquad \kappa\left(\left(a \triangleright b\right) \triangleright a\right)=\left(a \triangleright b\right) \triangleright \kappa(a).
       \end{align*}
      Thus, condition $(i)$ of \cref{refl-prop-rack-p-r} is straightforward.
       Moreover, by \eqref{eq:comp_alpha}, we have 
    $\alpha_{\bar{i}j}\alpha_{\bar{i}i}^{-1} = \alpha_{ij}$,
       for all $i, j \in I$, and, by the assumption of $\kappa\alpha_{ij} = \alpha_{ij}\kappa$, condition $(ii)$  of \cref{refl-prop-rack-p-r} is satisfied.
   \end{proof}
\end{pro}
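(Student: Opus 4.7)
The plan is to reduce the statement to \cref{refl-prop-rack-p-r} (or \cref{cor_rack} in the $p$-rack case) by taking $t_i := \kappa$ uniformly in $i$. With this choice we have $K_i = \kappa\,\alpha_{\bar i i} = t_i\,\alpha_{\bar i i}$, so the form of the proposed reflection matches the template of that theorem, and bijectivity of $K_i$ is clear from the bijectivity of $\kappa$ and of the $\alpha_{ij}$. It therefore remains to check the two conditions $(i)$ and $(ii)$ of \cref{refl-prop-rack-p-r}.

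To handle condition $(i)$, I would invoke the characterization of (non-parametric) reflections from \cite[Theorem 2.1]{AlMaSt24x}: a bijective map $\kappa$ is a reflection for the shelf $(X,\triangleright)$ if and only if, for all $a,b\in X$,
\begin{equation*}
a\triangleright b = \kappa(a)\triangleright b \qquad\text{and}\qquad \kappa\bigl((a\triangleright b)\triangleright a\bigr) = (a\triangleright b)\triangleright \kappa(a).
\end{equation*}
Swapping the roles of $a$ and $b$ in the second identity yields exactly condition $(i)$ of \cref{refl-prop-rack-p-r} with $t_i=\kappa$.

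For condition $(ii)$, the key observation is an index manipulation from the cocycle-type relation \eqref{eq:comp_alpha}. Specializing $\alpha_{ij}=\alpha_{hj}\alpha_{ih}$ at $h=i$ gives $\alpha_{\bar i j} = \alpha_{ij}\,\alpha_{\bar i i}$, so that $\alpha_{\bar i j}\,\alpha_{\bar i i}^{-1} = \alpha_{ij}$. Consequently condition $(ii)$ of \cref{refl-prop-rack-p-r} reduces to
\begin{equation*}
\alpha_{ij}(a)\triangleright b = \alpha_{ij}\,\kappa(a)\triangleright b,
\end{equation*}
and the commutation hypothesis $\kappa\,\alpha_{ij}=\alpha_{ij}\,\kappa$ rewrites the right-hand side as $\kappa(\alpha_{ij}(a))\triangleright b$. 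Setting $c:=\alpha_{ij}(a)$, this is precisely the first identity $c\triangleright b=\kappa(c)\triangleright b$ from the characterization of reflections for $(X,\triangleright)$.

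I do not expect any genuine obstacle: the proof is essentially a bookkeeping exercise that glues together three ingredients, namely the template \cref{refl-prop-rack-p-r}, the non-parametric reflection criterion from \cite{AlMaSt24x}, and the identity $\alpha_{\bar i j}\alpha_{\bar i i}^{-1}=\alpha_{ij}$ extracted from \eqref{eq:comp_alpha}. The only mildly delicate point is keeping the subscripts $i$, $\bar i$, $j$ straight so that the two instances of \eqref{eq:comp_alpha} are invoked with the correct choice of middle index, and making sure that the commutation $\kappa\alpha_{ij}=\alpha_{ij}\kappa$ is used on the correct side of the product. The $p$-rack case follows from the same argument via \cref{cor_rack}, whose condition $(i)$ requires only $\kappa(a\triangleright b)=a\triangleright\kappa(b)$, which is itself a consequence of the reflection characterization applied in the rack setting.
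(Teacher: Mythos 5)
Your proposal is correct and follows essentially the same route as the paper: reduce to \cref{refl-prop-rack-p-r} with $t_i=\kappa$, use the non-parametric reflection characterization from \cite[Theorem 2.1]{AlMaSt24x} (with $a$ and $b$ swapped) for condition $(i)$, and combine $\alpha_{\bar i j}\alpha_{\bar i i}^{-1}=\alpha_{ij}$ from \eqref{eq:comp_alpha} with the commutation $\kappa\alpha_{ij}=\alpha_{ij}\kappa$ for condition $(ii)$. The paper states these steps more tersely, but the argument is identical.
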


Note that the maps $t_i$ in \cref{exa_refle} are not reflections for the starting rack, so reflection $K^z$ is not obtained as in \cref{prop_reflec_Start_shelf}. However, other examples of reflections for $p$-racks we gave before can be constructed by \cref{prop_reflec_Start_shelf}.

\begin{exa}
\label{ex_refl_core}
Let $\left(X, \triangleright_{z_{ij}}\right)$ be the $p$-rack  in \cref{shelves} (2), with $a \triangleright_{z_{ij}} b=a+z_i-z_j-b+a+z_i-z_j,$
    for all $a, b \in X$, $z_{i,j} \in Y$. 
    The map $\kappa:X\to X$ defined by $\kappa\left(a\right) = a+\zeta$, for all $a \in X$, with $\zeta$ an element of order $2$ in $Z(X,+)$, is a bijective reflection of the core quandle $\left(X, \triangleright\right)$. By \cref{prop_reflec_Start_shelf}, consider the map $K_i: X \to X$ given by $\kappa_i(a)=a-z_i-z_i+ \zeta$, for all $i \in I$ and $a \in X$, and $\mu: Y\to Y,$  $\mu(z_i) = -z_i$, for all $z_i\in Y$,  then $K_i$ is a bijective reflection.
\end{exa}


\section{Admissible Drinfel'd twists}

\noindent Before we discuss the main results concerning the parametric reflection equation,
we first review some recent findings on the construction of admissible parametric Drinfel'd twist for $p$-rack type solutions of the Yang-Baxter equation that give rise to more general solutions of the set-theoretic Yang-Baxter equation of the type \cite{Doikoup}
$R^{z_{ij}}: X \times X \to X \times X,$ such that for all 
$a, b \in X,$  $z_{i,j} \in Y,$
\begin{equation} 
R^{z_{ij}} (b,a) = (\sigma^{z_{ij}}_a(b), \tau_b^{z_{ij}}(a)).\label{genset}
\end{equation}
We recall the conditions satisfied by the general set-theoretic solution of the parametric Yang-Baxter equation \cite{Doikoup}. 
\begin{pro} \label{bulk}
Let $X,\ Y\subseteq X,$ be non-empty sets, and define for all $a,b\in X,$ $z_{i,j} \in Y,$  the maps $\sigma_a^{z_{ij}}, \tau_b^{z_{ij}}: X \to X,$ 
$b \mapsto \sigma_a^{z_{ij}}(b)$ and $a \mapsto \tau_b^{z_{ij}}(a).$ 
Then $R^{z_{ij}}:X \times X \to X \times X$, 
such that  for all $a,b\in X,$ $z_{i,j}\in Y,$ $R^{z_{ij}}(b,a) = (\sigma_a^{z_{ij}}(b), \tau_b^{z_{ij}}(a))$ is a solution of the parametric set-theoretic Yang-Baxter equation if and only if, for all $a,b,c \in X$ and $z_{1,2,3} \in Y,$
\begin{eqnarray}
&&  \sigma^{z_{13}}_a\left(\sigma^{z_{12}}_b\left(c\right)\right) = \sigma^{z_{12}}_{\sigma^{z_{23}}_a\left(b\right)}\left(\sigma^{z_{13}}_{\tau^{z_{23}}_b\left(a\right)}\left(c\right)\right)\label{1}\\[0.3cm]
&& \tau^{z_{13}}_c\left(\tau^{z_{23}}_b(a)\right) =\tau^{z_{23}}_{\tau^{z_{12}}_c\left(b\right)}\left(\tau^{z_{13}}_{\sigma^{z_{12}}_b\left(c\right)}(a)\right)\label{2}\\[0.2cm]
&& \sigma^{z_{23}}_{\tau^{z_{13}}_{\sigma^{z_{12}}_b\left(c\right)}\left(a\right)}
    \left(\tau^{z_{12}}_c\left(b\right)\right)
   = \tau^{z_{12}}_{\sigma^{z_{13}}_{\tau^{z_{23}}_b\left(a\right)}\left(c\right)}\left(\sigma^{z_{23}}_a\left(b\right) \right). \label{3} \end{eqnarray}
\end{pro}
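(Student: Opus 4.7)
The plan is to apply both compositions in the parametric Yang-Baxter equation \eqref{YBE} to an arbitrary triple $(c,b,a)\in X^3$ and compare the resulting ordered triples component by component. Since $R^{z_{ij}}_{kl}$ only alters the slots $k,l$ according to the rule $R^{z_{ij}}(y,x)=(\sigma^{z_{ij}}_x(y),\tau^{z_{ij}}_y(x))$, each successive application is a purely mechanical substitution, and no deeper structural argument is needed beyond the definitions of $R_{12}^{z_{ij}}$, $R_{13}^{z_{ij}}$, $R_{23}^{z_{ij}}$ recalled in Subsection~1.1.

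First I would compute the right-hand side $R_{23}^{z_{23}}R_{13}^{z_{13}}R_{12}^{z_{12}}(c,b,a)$ by successive substitution. After the first step the triple becomes $(\sigma^{z_{12}}_b(c),\tau^{z_{12}}_c(b),a)$; after the second step the first slot is $\sigma^{z_{13}}_a(\sigma^{z_{12}}_b(c))$ and the third slot becomes $\tau^{z_{13}}_{\sigma^{z_{12}}_b(c)}(a)$; and after the third step the middle and third slots are updated by $R_{23}^{z_{23}}$ with these new first and third arguments. Then I would compute the left-hand side $R_{12}^{z_{12}}R_{13}^{z_{13}}R_{23}^{z_{23}}(c,b,a)$ in the same way; the first slot ends up as $\sigma^{z_{12}}_{\sigma^{z_{23}}_a(b)}(\sigma^{z_{13}}_{\tau^{z_{23}}_b(a)}(c))$ and the third slot as $\tau^{z_{13}}_c(\tau^{z_{23}}_b(a))$, with a correspondingly updated middle slot.

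Comparing the two outcomes slot by slot, the equality of the first coordinates is exactly \eqref{1}, the equality of the third coordinates is exactly \eqref{2}, and the equality of the middle coordinates is exactly \eqref{3}. Since $a,b,c \in X$ are arbitrary, these three identities are forced by \eqref{YBE}, yielding necessity. Conversely, assuming \eqref{1}--\eqref{3} hold for all $a,b,c\in X$ and $z_{1,2,3}\in Y$, the same bookkeeping shows that the two triples produced by the competing compositions coincide in every slot, giving sufficiency.

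The only real obstacle is keeping the notation under control: one must carefully track which element plays the role of the subscript and which the argument of $\sigma^{z_{ij}}$ or $\tau^{z_{ij}}$ through two successive substitutions on each side, as well as which pair of parameters labels each $R$-factor. Writing out the intermediate triples explicitly and then reading off the first, middle, and third slots at the end prevents collisions of notation and makes the match with \eqref{1}, \eqref{2}, \eqref{3} transparent.
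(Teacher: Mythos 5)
Your proposal is correct and is exactly the argument the paper intends (it defers the proof to \cite{Doikoup}, where the same direct expansion of both sides of \eqref{YBE} on a triple $(c,b,a)$ and slot-by-slot comparison is carried out): the first, third, and middle coordinates yield \eqref{1}, \eqref{2}, \eqref{3} respectively, and the converse is the same bookkeeping read backwards. One tiny wording slip: in the last step of the composition $R_{23}^{z_{23}}R_{13}^{z_{13}}R_{12}^{z_{12}}$, the map $R_{23}^{z_{23}}$ updates the second and third slots using the \emph{second} and third arguments (not the first and third), but the final expressions you record are all correct, so nothing is affected.
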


\subsection{Twisted reflection maps}
We first derive the main conditions satisfied by solutions of the reflection equation associated with the general-type set-theoretic solution (\ref{genset}).
\begin{pro} \label{bound}
Let $X, Y \subseteq X$, be non-empty sets and $R^{z_{ij}}: X \times X \to X \times X$ a solution of the set-theoretic Yang-Baxter equation, $R^{z_{ij}}(b, a) =(\sigma_a^{z_{ij}}(b), \tau_b^{z_{ij}}(a)).$ Let also $K^{z}: X\to X,$ such that $a\mapsto \kappa^z(a),$ for all $z \in Y.$ Then $K^z$ is a reflection if and only if \begin{enumerate}
    \item[$(i)$] $\sigma^{z_{12}}_{\sigma_{a}^{z_{2\bar1}}(\kappa^{z_2}(b))}(\kappa^{z_1}(\tau^{z_{2\bar 1}}_{\kappa^{z_2}(b)}(a))  =\sigma^{z_{1\bar 2}}_{\sigma_{a}^{z_{\bar 2\bar1}}(b)}(\kappa^{z_1}(\tau^{z_{\bar 2\bar 1}}_{b}(a))$,
    \vspace{2mm}
    
    \item[$(ii)$] $~\tau^{z_{12}}_{\kappa^{z_1}\left(\tau^{z_{2\bar 1}}_{\kappa^{z_2}(b)}\left(a\right)\right)}\left(\sigma_a^{z_{2\bar 1}}(\kappa^{z_2}\left(b\right)\right) =\kappa^{z_2}\left(\tau^{z_{1\bar 2}}_{\kappa^{z_1}\left(\tau^{z_{\bar2\bar 1}}_{b}\left(a\right)\right)}(\sigma_a^{z_{\bar 2\bar 1}}\left(b\right)\right),$
\end{enumerate}
 for all $a, b\in X,$ $z_{1,2} \in Y$.  
\end{pro}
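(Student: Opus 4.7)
The plan is to compute explicitly both sides of the reflection equation \eqref{reflec_eq} evaluated on an arbitrary pair $(a,b)\in X\times X$ and then compare the two coordinates separately. Since \eqref{reflec_eq} is an equality of maps $X\times X\to X\times X$, it is equivalent to the coordinatewise equality of the two outputs for every input, so the whole statement will reduce to two scalar identities that must coincide with (i) and (ii).

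First I would fix how the auxiliary operators act in the set-theoretic setting. The identity $R_{21}=P\,R_{12}\,P$, with $P$ the flip, gives immediately $R_{21}^{z_{ij}}(u_1,u_2) = \bigl(\tau^{z_{ij}}_{u_2}(u_1),\,\sigma^{z_{ij}}_{u_1}(u_2)\bigr)$, while the actions of $K_1^z$ and $K_2^z$ are trivial by definition. With these formulae in hand, I would evaluate the left-hand side $R_{12}^{z_{12}}\,K_1^{z_1}\,R_{21}^{z_{2\bar 1}}\,K_2^{z_2}(a,b)$ stepwise from right to left: $K_2^{z_2}$ produces $(a,\kappa^{z_2}(b))$; then $R_{21}^{z_{2\bar 1}}$ yields the pair $\bigl(\tau^{z_{2\bar 1}}_{\kappa^{z_2}(b)}(a),\,\sigma^{z_{2\bar 1}}_a(\kappa^{z_2}(b))\bigr)$; then $K_1^{z_1}$ wraps the first coordinate in $\kappa^{z_1}$; and finally $R_{12}^{z_{12}}$ is applied to the resulting pair via the standard rule $R_{12}^{z_{12}}(X,Y)=\bigl(\sigma^{z_{12}}_Y(X),\,\tau^{z_{12}}_X(Y)\bigr)$.

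Next, I would carry out the analogous stepwise calculation for the right-hand side $K_2^{z_2}\,R_{12}^{z_{1\bar 2}}\,K_1^{z_1}\,R_{21}^{z_{\bar 2\bar 1}}(a,b)$, obtaining a pair whose first coordinate is precisely the $\sigma^{z_{1\bar 2}}$-expression on the right of (i) and whose second coordinate is $\kappa^{z_2}$ applied to the $\tau^{z_{1\bar 2}}$-expression on the right of (ii). Equating the first coordinates of the two sides yields exactly (i), and equating the second coordinates yields (ii); conversely, (i) and (ii) together force \eqref{reflec_eq} on $(a,b)$. Since $(a,b)\in X\times X$ is arbitrary, the equivalence of the proposition follows.

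The only care needed is the bookkeeping of the parameters and of the involutive labels $z_{\bar i}=\mu(z_i)$: these do not interact with the set-theoretic data inside the formulae, they merely dictate which parameter is fed to each copy of $R$ on each side of the equation. There is no conceptual obstacle — the argument is a direct expansion, in the same spirit as the $p$-shelf computation underlying \cref{lemma_generale} and the non-parametric verification of \cite{AlMaSt24x}, but with the extra parameter tracking required by the twisted labels $z_{12},z_{2\bar 1},z_{1\bar 2},z_{\bar 2\bar 1}$.
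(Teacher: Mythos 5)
Your proposal is correct and matches the paper's proof, which is exactly the direct evaluation of both sides of \eqref{reflec_eq} on an arbitrary pair $(a,b)$ and coordinatewise comparison; your formula $R_{21}^{z}(u_1,u_2)=(\tau^{z}_{u_2}(u_1),\sigma^{z}_{u_1}(u_2))$ and the stepwise right-to-left computation reproduce precisely the expressions in (i) and (ii).
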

\begin{proof} The proof is direct by equating the left and right hand sides of the reflection equation. 
\end{proof}

We  now introduce the notion of a parametric {\it Drinfel'd twist} and review the
results shown in \cite{Doikoup} (see for the non-parametric Yang-Baxter equation
 \cite{Doikou1, 
 DoiRyb22, DoRySt}, \cite{Sol} \cite{Lebed, LebVen}). 

\begin{defn}\label{def:Drinfiso}
Let $(X, R^{z_{ij}})$ and $(X,S^{z_{ij}})$ be solutions of the parametric set-theoretic Yang-Baxter equation.  
We say that a map 
$\varphi^{z_{ij}}: X\times X\to X\times X$ 
is a \emph{Drinfel'd twist} (\emph{D-twist}) if for all $z_{i,j} \in Y,$
$$
\varphi^{z_{ij}}\, R^{z_{ij}} = S^{z_{ij}}\, (\varphi^{z_{ji}})^{(op)},
$$
where $(\varphi^{z_{ji}})^{(op)} = \pi \circ \varphi^{z_{ji}},$ and $\pi: X \times X \to X \times X$ 
is the ``flip'' map, such that for all $x,y\in X,$ $\pi(x,y) = (y,x).$
If $\varphi^{z_{ij}}$ is a bijection we say that $(X,R^{z_{ij}})$ 
and $(X, S^{z_{ij}})$ are {\it $D$-equivalent (via $\varphi^{z_{ij}}$)}, and 
we denote it by $R^{z_{ij}}\cong_D S^{z_{ij}}$.
\end{defn}

The following proposition and theorem and the corresponding proofs can be found in \cite{Doikoup}.
\begin{pro}\label{Th.r->r'}
    Let $(X,R^{z_{ij}})$ be a left non-degenerate solution, such that for all $a,b \in X,$ $z_{i,j} \in Y,$ $R^{z_{ij}}(b,a) = (\sigma^{z_{ij}}_{a}(b), \tau^{z_{ij}}_{b}(a))$ and let $(X,S^{z_{ij}})$ be a solution associated to a $p$-shelf $\left(X, \triangleright_{ij}\right)$ such that for all $a,b \in X,$ $z_{i,j} \in Y,$  $S^{z_{ij}}(b,a) = (b, b \,\triangleright_{z_{ij}} a)$ and $\tau_b^{z_{ij}}(a) : =(\sigma^{z_{ji}}_{\sigma^{z_{ij}}_a(b)})^{-1}\left(\sigma_a^{z_{ij}}(b) \,\triangleright_{z_{ij}} \, a\right).$ 
Then $R^{z_{ij}}$ is $D$-equivalent to $S^{z_{ij}}$.
\end{pro}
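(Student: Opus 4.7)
The plan is to exhibit an explicit bijective map $\varphi^{z_{ij}}\colon X\times X\to X\times X$ satisfying the intertwining equation $\varphi^{z_{ij}} R^{z_{ij}} = S^{z_{ij}} (\varphi^{z_{ji}})^{(op)}$ of \cref{def:Drinfiso}. The starting point is to rewrite the hypothesis on $\tau$ as the identity
\begin{equation*}
\sigma^{z_{ji}}_{\sigma^{z_{ij}}_a(b)}\bigl(\tau^{z_{ij}}_b(a)\bigr) = \sigma^{z_{ij}}_a(b)\,\triangleright_{z_{ij}}\, a,
\end{equation*}
which says that applying $\sigma^{z_{ji}}$ indexed by the first slot to the second slot turns the pair $R^{z_{ij}}(b,a)=(\sigma^{z_{ij}}_a(b),\tau^{z_{ij}}_b(a))$ into the shelf-shaped pair $(c, c\triangleright_{z_{ij}} a)$ with $c=\sigma^{z_{ij}}_a(b)$, i.e.\ exactly the datum $S^{z_{ij}}(c,a)$.

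This strongly suggests that $\varphi^{z_{ij}}$ should be built from the family of bijections $\sigma^{z_{ji}}_c$, which is legitimate because left non-degeneracy of $R^{z_{ij}}$ guarantees that each such map is a bijection of $X$. Natural first guesses are $\varphi^{z_{ij}}(b,a):=(b,\sigma^{z_{ji}}_b(a))$ and its flip-adjusted variant $\varphi^{z_{ij}}(b,a):=(a,\sigma^{z_{ji}}_a(b))$; for any candidate of this type, bijectivity is immediate, and the identity above reduces $\varphi^{z_{ij}} R^{z_{ij}}(b,a)$ to an expression of the form $S^{z_{ij}}(\sigma^{z_{ij}}_a(b),a)$, which is exactly what $S^{z_{ij}}(\varphi^{z_{ji}})^{(op)}(b,a)$ must equal.

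Matching the two sides then amounts to arranging that $\pi\varphi^{z_{ji}}(b,a)=(\sigma^{z_{ij}}_a(b),a)$; the initial candidate may need refining so that the swap $i\leftrightarrow j$ in $\varphi^{z_{ji}}$ together with the flip $\pi$ produce the required pair. After the correct form is fixed, the intertwining splits into two slot-wise checks, both of which follow from the displayed identity and, if needed, from the YBE relations of \cref{bulk}.

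The hard part is precisely this reconciliation of the asymmetry induced by $(\varphi^{z_{ji}})^{(op)}$: the composition with $\pi$ couples the two slots in a way that prevents the most symmetric candidate from working unchanged. Once $\varphi^{z_{ij}}$ is pinned down so that its $(j,i)$-avatar, flipped by $\pi$, reproduces $(\sigma^{z_{ij}}_a(b),a)$, the verification is routine and depends only on the defining relation of $\tau^{z_{ij}}_b$ in terms of $\triangleright_{z_{ij}}$, and bijectivity follows directly from the left non-degeneracy of $R^{z_{ij}}$.
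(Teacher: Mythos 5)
Your strategy is the right one and is in fact how this equivalence is established (the paper itself only cites \cite{Doikoup} for the proof, but the twist is the one spelled out in \cref{admi2}). The point you leave open --- which of your two candidates to take and how to reconcile the flip --- resolves immediately in favour of the first: set $\varphi^{z_{ij}}(x,y):=\bigl(x,\sigma^{z_{ji}}_{x}(y)\bigr)$, which is bijective by left non-degeneracy. Then, exactly as you computed,
\begin{equation*}
\varphi^{z_{ij}}R^{z_{ij}}(b,a)
=\bigl(\sigma^{z_{ij}}_a(b),\,\sigma^{z_{ji}}_{\sigma^{z_{ij}}_a(b)}(\tau^{z_{ij}}_b(a))\bigr)
=\bigl(\sigma^{z_{ij}}_a(b),\,\sigma^{z_{ij}}_a(b)\triangleright_{z_{ij}}a\bigr)
=S^{z_{ij}}\bigl(\sigma^{z_{ij}}_a(b),a\bigr),
\end{equation*}
while on the other side $(\varphi^{z_{ji}})^{(op)}$ must be read as $\varphi^{z_{ji}}_{21}=\pi\circ\varphi^{z_{ji}}\circ\pi$ --- this is the convention actually used in the proof of \cref{reflectionA}, where the twist relation appears as $S^{z_{ij}}_{12}=\varphi^{z_{ij}}_{12}R^{z_{ij}}_{12}(\varphi^{z_{ji}}_{21})^{-1}$. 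With that reading, $(\varphi^{z_{ji}})^{(op)}(b,a)=\pi\varphi^{z_{ji}}(a,b)=\pi\bigl(a,\sigma^{z_{ij}}_a(b)\bigr)=\bigl(\sigma^{z_{ij}}_a(b),a\bigr)$, so both sides coincide; no appeal to the Yang--Baxter relations of \cref{bulk} is needed, the whole verification resting on the single identity you isolated from the definition of $\tau^{z_{ij}}_b$. The only genuine criticism is that, as written, your argument is not finished: you present the flip bookkeeping as a ``hard part'' still to be reconciled, whereas it is a two-line check once you commit to the candidate above, and a proof must actually commit and carry it out.
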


\begin{rem} \label{rem1}
In the special case of reversible $R$-matrices we observe from the fundamental relation $R_{21}^{z_{ji}}R_{12}^{z_{ij}} = \id_{X \times X}$ that $\sigma^{z_{ji}}_{\sigma^{z_{ij}}_{a}{(b)}}\left(\tau_b^{z_{ij}}(a)\right) = a,$ which leads to $b\triangleright_{z_{ij}} a =a,$ for all $a,b\in X,$ $z_{i,j}\in Y,$ and hence $S^{z_{ij}} = \id_{X \times X}.$ 
\end{rem}

\begin{defn}\label{admi2}
    Let $(X,\triangleright_{z_{ij}})$ be a $p$-shelf.  We say that a twist $\varphi^{z_{ij}}: 
    X \times X\to X \times X,$ such that 
for, all $a,b\in X,$ $z_{i,j} \in Y,$ $\varphi^{z_{ij}}(a,b):=(a,\sigma^{z_{ji}}_a(b))$ and $\sigma_a^{z_{ij}}$ is a bijection, is an \emph{admissible twist}, if for all $a,b, c\in X$, $z_{i,j,k } \in Y$:
    \begin{enumerate}
        \item[$(1)$] $\sigma^{z_{ik}}_a(\sigma^{z_{ij}}_b(c)) = \sigma^{z_{ij}}_{\sigma^{z_{jk}}_a\left(b\right)}(\sigma^{z_{ik}}_{\tau^{z_{jk}}_b\left(a\right)}(c))$,
        \vspace{1mm}        
   \item[$(2)$]  $\sigma^{z_{ik}}_c(b) \triangleright_{z_{ij}} \sigma^{z_{jk}}_{c}(a) = \sigma^{z_{jk}}_c(b \triangleright_{z_{ij}} a).$
       \end{enumerate} 
\end{defn}
\smallskip

In the following theorem we show that any left non-degenerate solution $\left(X,\, R^{z_{ij}}\right)$ can be expressed in terms of the $p$-shelf $\left(X,\,\triangleright_{z_{ij}}\right)$  and its admissible twist.

\begin{thm}\label{le:lndsol}
    Let $\left(X,\,\triangleright_{z_{ij}} \right)$ be a $p$-shelf and let $\varphi^{z_{ij}}: X \times X\to X \times X$ be a map such that $\varphi^{z_{ij}}(a,b):=(a,\sigma^{z_{ji}}_a(b))$, for all $a,b\in X,$ $z_{i, j} \in Y.$ Then, the map
$R^{z_{ij}}:X\times X\to X\times X$ defined by
    \begin{equation}\label{thm:solform}
        R^{z_{ij}}\left(b, a\right)  
        = \left(\sigma^{z_{ij}}_a\left(b\right),\, (\sigma^{z_{ji}}_{\sigma_a^{z_{ij}}(b)})^{-1}
        (\sigma_a^{z_{ij}}(b)\triangleright_{z_{ij}} a)\right),  
    \end{equation}
      for all $a,b\in X,$ $z_{i,j} \in Y$, is a solution if and only if $\varphi^{z_{ij}}$ is an admissible twist. 
\end{thm}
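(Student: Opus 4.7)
The plan is to verify the parametric Yang-Baxter equation for $R^{z_{ij}}$ directly via the three defining relations \eqref{1}, \eqref{2}, \eqref{3} of \cref{bulk}. With $R^{z_{ij}}(b,a) = (\sigma^{z_{ij}}_a(b), \tau^{z_{ij}}_b(a))$ and the prescribed $\tau^{z_{ij}}_b(a) = (\sigma^{z_{ji}}_{\sigma^{z_{ij}}_a(b)})^{-1}\bigl(\sigma^{z_{ij}}_a(b)\triangleright_{z_{ij}} a\bigr)$, the task reduces to checking that, under the $p$-shelf hypothesis on $(X,\triangleright_{z_{ij}})$, the system \eqref{1}, \eqref{2}, \eqref{3} is equivalent to the conjunction of conditions (1) and (2) of \cref{admi2}. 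Since each step below will be a manifest biconditional, both directions of the iff are handled uniformly.

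The first observation is that equation \eqref{1} of \cref{bulk} is literally identical to condition (1) of \cref{admi2}; this correspondence is immediate by pattern matching and accounts for one of the two admissibility conditions with no calculation needed.

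For equations \eqref{2} and \eqref{3}, the key auxiliary identity is
\begin{equation*}
\sigma^{z_{ji}}_{\sigma^{z_{ij}}_a(b)}\bigl(\tau^{z_{ij}}_b(a)\bigr) \;=\; \sigma^{z_{ij}}_a(b) \triangleright_{z_{ij}} a,
\end{equation*}
which is just the definition of $\tau$ with the inverse $\sigma^{z_{ji}}_{\sigma^{z_{ij}}_a(b)}$ applied to both sides. Using this identity to eliminate the $\tau$-terms appearing in \eqref{2} and \eqref{3}, together with condition (1) to commute and collapse nested $\sigma$-compositions and the $p$-shelf self-distributivity \eqref{shelf} to rearrange nested $\triangleright_{z_{ij}}$-expressions, I expect both equations to reduce to condition (2) of \cref{admi2}. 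Morally, \eqref{2} and \eqref{3} each encode the same statement, that $\sigma$ intertwines $\triangleright_{z_{ij}}$ with itself in the manner prescribed by admissibility condition (2).

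The main obstacle is the bookkeeping in this reduction. Equations \eqref{2} and \eqref{3} feature $\sigma$ and $\tau$ maps nested several layers deep with three distinct parameter labels, so precisely tracking which labels appear where after applying the inverse $\sigma$-map to both sides demands care. A useful conceptual guide is \cref{Th.r->r'}: it asserts $R^{z_{ij}}\cong_D S^{z_{ij}}$ where $S^{z_{ij}}$ is the $p$-shelf solution, so admissibility is precisely the condition guaranteeing that the twist $\varphi^{z_{ij}}$ transports the known Yang-Baxter property of $S^{z_{ij}}$ to $R^{z_{ij}}$. This framing clarifies why two YBE equations should reduce to a single admissibility condition: \eqref{2} and \eqref{3} respectively track the second and third components of the $S$-side of the Yang-Baxter equation pulled back through the twist, and the twist compatibility encoded by condition (2) forces both to hold simultaneously.
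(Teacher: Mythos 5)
Your approach coincides with the paper's, which proves the statement by verifying the Yang--Baxter conditions \eqref{1}--\eqref{3} against the two admissibility conditions of \cref{admi2} and defers the bookkeeping to \cite{Doikoup}: indeed \eqref{1} is literally admissibility $(1)$ after relabelling $1,2,3\mapsto i,j,k$, and \eqref{3}, once you apply the bijection $\sigma^{z_{21}}_{\sigma^{z_{13}}_a(\sigma^{z_{12}}_b(c))}$ to both sides and collapse the resulting composition $\sigma^{z_{21}}_{\sigma^{z_{13}}_a(E)}\sigma^{z_{23}}_{\tau^{z_{13}}_E(a)}=\sigma^{z_{23}}_a\sigma^{z_{21}}_E$ via condition $(1)$, becomes exactly admissibility $(2)$ evaluated at $E=\sigma^{z_{12}}_b(c)$, which suffices because $\sigma^{z_{12}}_b$ is a bijection. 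The one imprecision is your treatment of \eqref{2}: the all-$\tau$ relation is not needed for the ``only if'' direction and does not reduce \emph{to} admissibility $(2)$, but must be derived as a consequence of $(1)$, $(2)$ and the self-distributivity \eqref{shelf} together --- this is precisely the ``quite involved'' computation that you, like the paper itself, leave unexecuted.
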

\begin{proof} The proof is quite involved and is based on the definition of the admissible Drinfel'd twist in \cref{admi2} and the Yang-Baxter conditions (\ref{1})-(\ref{3}) (see \cite{Doikoup} for the detailed proof).
\end{proof}

\begin{rem} \label{admi3} (Reversible case.) Notice that in the special case of $a\triangleright_{z_{ij}} b =b$, for all $a,b \in X$, $z_{i,j} \in Y$,  the solution (\ref{thm:solform}) of Theorem \ref{le:lndsol}
reduces to the easier expression
\begin{equation}\label{prop:solform}
        R^{z_{ij}}\left(b, a\right)  
        = \left(\sigma^{z_{ij}}_a\left(b\right),\, (\sigma^{z_{ji}}_{\sigma_a^{z_{ij}}(b)})^{-1}
        (a)\right ),
    \end{equation}
    for all $a,b\in X,$ $z_{i,j} \in Y$,  and $R^{z_{ij}}$ is reversible.
\end{rem} 
\smallskip

We can now formulate the main statement on reflections obtained from $p$-racks and twists.

\begin{pro}\label{reflectionA}
 Let $(X,R^{z_{ij}})$ be a left non-degenerate solution such that for all $a,b \in X,$ $z_{i,j} \in Y,$ $R^{z_{ij}}(b,a) = (\sigma^{z_{ij}}_{a}(b), \tau^{z_{ij}}_{b}(a))$ and let $(X,S^{z_{ij}})$ be a solution such that for all $a,b \in X,$ $z_{i,j} \in Y,$  $S^{z_{ij}}(b,a) = (b ,b \triangleright_{z_{ij}} a)$ and $\tau_b^{z_{ij}}(a) : = (\sigma^{z_{ji}}_{\sigma^{z_{ij}}_a(b)})^{-1}(\sigma_a^{z_{ij}}(b)\triangleright_{z_{ij}} a).$ 
 Let also for all $z_i\in Y,$ $K^{z_i}: X \to X,$ $x \mapsto \kappa^{z_i}(x),$ where $\kappa^{z_i}$ is a bijective map, be a solution of the reflection equation, i.e.,
 \begin{equation} 
 S_{12}^{z_{ij}}\ K_1^{z_i}\ S_{21}^{z_{j\bar i}}\ K_2^{z_j}= K_2^{z_j}\ S_{12}^{z_{i\bar j}}\ K_1^{z_i} \  S_{21}^{z_{\bar j \bar i}}. \label{refl1} 
 \end{equation}
 Then, $K^{z_i}$ also satisfies,
\begin{equation} 
R_{12}^{z_{ij}}\ K_1^{z_i}\ R_{21}^{z_{j\bar i}}\ K_2^{z_j} = 
K_2^{z_j}\ R_{12}^{z_{i\bar j}}\ K_1^{z_i}\
R_{21}^{z_{\bar j \bar i }},\label{refl22}
\end{equation}
 if for all $a, b \in X,$ $z_{i,j}\in Y,$ 
 \begin{equation}
 \kappa^{z_j}\left(\sigma_a^{z_{\bar j i}}(b)\right) = \sigma_a^{z_{ji}}\left(\kappa^{z_j}(b)\right). \label{basic0}
 \end{equation}
\end{pro}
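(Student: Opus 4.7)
The plan is to lift the reflection equation \eqref{refl1} from $S$ to \eqref{refl22} for $R$ by conjugating with the admissible parametric Drinfel'd twist $\varphi^{z_{ij}}(a,b) = (a, \sigma^{z_{ji}}_a(b))$ of \cref{admi2}. By \cref{Th.r->r'} and \cref{le:lndsol}, the very hypothesis relating $\tau^{z_{ij}}_b(a)$ to the $p$-rack operation encodes exactly the twist identity $\varphi^{z_{ij}} R^{z_{ij}} = S^{z_{ij}} (\varphi^{z_{ji}})^{(op)}$, and since $\varphi^{z_{ij}}$ is invertible (by left non-degeneracy of $R$) one may substitute
$$R^{z_{ij}} = (\varphi^{z_{ij}})^{-1}\, S^{z_{ij}}\, (\varphi^{z_{ji}})^{(op)}$$
everywhere in \eqref{refl22}, with the analogous expression in swapped positions for each $R^{z_{\cdot\cdot}}_{21}$ factor.

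The first technical step is to reinterpret \eqref{basic0} as two commutation identities between the boundary maps and the twist components. A direct computation on pairs $(a,b)$ shows that \eqref{basic0} is equivalent to
$$\varphi^{z_{ij}}_{12}\, K^{z_j}_2 = K^{z_j}_2\, \varphi^{z_{i\bar j}}_{12},$$
and, after the relabeling $(i,j,a,b)\mapsto(j,i,b,a)$ combined with the involutivity $\bar{\bar{\imath}}=i$ of $\mu$, also to its companion in the other tensor position,
$$\varphi^{z_{ji}}_{21}\, K^{z_i}_1 = K^{z_i}_1\, \varphi^{z_{j\bar i}}_{21}.$$
Thus each inner twist that appears in the expanded \eqref{refl22} can slide past the adjacent boundary map at the cost of a single parameter flip by $\mu$.

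The second step is a telescoping argument. On the left-hand side of \eqref{refl22}, moving $\varphi^{z_{ji}}_{21}$ through $K^{z_i}_1$ produces $\varphi^{z_{j\bar i}}_{21}$, which then annihilates the adjacent $(\varphi^{z_{j\bar i}}_{21})^{-1}$; the remaining inner factor $\varphi^{z_{\bar i j}}_{12}$ passes through $K^{z_j}_2$ to land as $\varphi^{z_{\bar i \bar j}}_{12}$ at the far right. A symmetric telescoping is carried out on the right-hand side, where $K^{z_j}_2(\varphi^{z_{i\bar j}}_{12})^{-1}$ is rewritten as $(\varphi^{z_{ij}}_{12})^{-1}K^{z_j}_2$ and the pair $\varphi^{z_{\bar j i}}_{21}K^{z_i}_1(\varphi^{z_{\bar j \bar i}}_{21})^{-1}$ collapses in the same way. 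What remains on both sides is precisely the reflection equation \eqref{refl1} for $S$, sandwiched between the common outer factors $(\varphi^{z_{ij}}_{12})^{-1}$ on the left and $\varphi^{z_{\bar i \bar j}}_{12}$ on the right, which therefore cancel.

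The main obstacle is the bookkeeping: the four parameter patterns $z_{ij}, z_{j\bar i}, z_{i\bar j}, z_{\bar j \bar i}$ and the two tensor positions must be tracked consistently through a long product, and the parameter flips produced at each application of the commutation identities must chain up so that the outer twists on the two sides truly coincide. The involutivity of $\mu$ is used crucially at every turn to close all four chains.
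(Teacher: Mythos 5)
Your proposal is correct and is essentially the paper's own argument: both rest on the $D$-equivalence $S_{12}^{z_{ij}} = \varphi_{12}^{z_{ij}} R_{12}^{z_{ij}} (\varphi_{21}^{z_{ji}})^{-1}$ from \cref{Th.r->r'} together with the observation that \eqref{basic0} is exactly the commutation of $K$ with the twist in either tensor factor, after which the twists telescope and the two reflection equations become conjugate to one another. The only (cosmetic) difference is the direction of substitution --- you expand $R$ in terms of $S$ inside \eqref{refl22}, while the paper expands $S$ in terms of $R$ inside \eqref{refl1} --- and your write-up actually carries out the telescoping and the parameter bookkeeping more explicitly than the paper does.
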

\begin{proof} Due to Definition \ref{def:Drinfiso} and Proposition \ref{Th.r->r'} 
we recall that $R^{z_{ij}}$ and $S^{z_{ij}}$ are $D$-equivalent, i.e. 
\begin{equation}
S_{12}^{z_{ij}} = \varphi_{12}^{z_{ij}}\ R_{12}^{z_{ij}}\ (\varphi^{z_{ji}}_{21})^{-1}, \label{twist2}
\end{equation}
where $\varphi^{z_{ij}}: X\times X\to X\times X$ such that $\varphi^{z_{ij}}(a,b) = 
(a, \sigma^{z_{ji}}_{a}(b))$ is the admissible twist of Definition \ref{admi2}. 
By means of \eqref{twist2} the reflection equation 
(\ref{refl1}) becomes
\begin{equation} 
\varphi_{12}^{z_{ij}}R_{12}^{z_{ij}} (\varphi^{z_{ji}}_{21})^{-1} K_1^{z_i} 
\varphi_{21}^{z_{j\bar i}} R_{21}^{z_{j\bar i}}(\varphi^{z_{\bar ij }}_{12})^{-1} K_2^{z_j} = 
K_2^{z_j}\varphi_{12}^{z_{i\bar j}} R_{12}^{z_{i\bar j}}(\varphi^{z_{\bar ji}}_{21})^{-1} K_1^{z_i} 
\varphi_{21}^{z_{\bar j \bar i}}R_{21}^{z_{\bar j \bar i }}(\varphi^{z_{\bar i \bar j }}_{12})^{-1}. 
\end{equation}
From the latter expression, we conclude that if  
\begin{equation}
K_2^{z_j}\ \varphi^{z_{i\bar j}}_{{12}} = \varphi^{z_{ij}}_{{12}}\ K_2^{z_j}  \quad \mbox{or, equivalently,} \quad K_1^{z_i}\ \varphi^{z_{j\bar i}}_{{21}} = \varphi^{z_{ji}}_{{21}}\ K_1^{z_i}\label{cond1}
\end{equation}
then $K^{z_i}$ is a solution of the reflection equation (\ref{refl22}). 
Moreover, condition (\ref{cond1}) readily leads to (\ref{basic0}).
\end{proof}
\smallskip

\begin{rem}
Note that in the reversible case, if condition (\ref{basic0}) holds, then conditions (i) and (ii) of Proposition \ref{bound} automatically hold.
\end{rem}

\begin{rem}
  We are basically interested in invertible reflections $K^z.$ Due to Propositions \ref{reflectionA}, \ref{bulk} and \ref{bound} we conclude that if conditions (i) and (ii) of Corollary \ref{rackf} and (\ref{basic0})
    hold,  then every solution of \eqref{refl1} is also a solution of \eqref{refl22}. The next natural question is: do all solutions of \eqref{refl22} coincide with the solutions of \eqref{refl1}?
\end{rem}


\subsection{Solutions from Drinfel'd twists}\label{sol-Drtwist}

\subsection*{A. Reversible case}
\noindent We first focus on the systematic derivation of reversible, set-theoretic solutions of the parametric Yang-Baxter equation by exploiting the existence of an admissible Drinfel'd twist.

The following useful proposition can be now formulated (see also \cite{Ru07, Ru19, DoRy23}). Henceforth, a map $h:Y \times Y\to Y,$ $(z_i,z_j) \mapsto h(z_{ij})$ will be called \emph{symmetric} if $h(z_{ij}) = h({z_{ji}}),$ for all $z_{i,j} \in Y$.
\begin{pro}\label{p1} 
    Let $(X, \circ)$  be a group, $Y \subseteq X$, and consider, for all $z_{i,j} \in Y,$ $a, b \in X, z_{i,j} \in Y$, $\sigma^{z_{ij}}_a, \tau_b^{z_{ij}}: X \to X$ maps such that  $\sigma^{z_{ij}}_a$ is a bijection and the following hold for all $a, b \in X$ \begin{equation}\label{eqq}
    a\circ b = \sigma_a^{z_{ij}}(b) \circ \tau^{z_{ij}}_{b}(a) \quad \text{and} \quad \sigma^{z_{ji}}_{\sigma^{z_{ij}}_a(b)}\left(\tau^{z_{ij}}_b(a)\right) =a. 
\end{equation}
Moreover, define $\bullet_{z_{ij}}: X\times X \to X$ such that $a \bullet_{z_{ji}} b := a\circ (\sigma^{z_{ij}}_a)^{-1}(b)\circ h(z_{ij}),$  for all $a, b \in X$, $z_{i,j} \in Y,$ where $h:Y\times Y \to Y$ is a symmetric function.
\begin{enumerate}
    \item[$(1)$] Then $a\bullet_{z_{ji}} b = b \bullet_{z_{ij}} a,$ for all $a,b \in X$ and $z_{i,j} \in Y.$
    \item[$(2)$]
     Assume also $(X, + ,\circ)$ is a brace, $Y \subseteq {\cal D}(X)$, and for all $z_{i,j}\in Y,$  $z_i\circ z_j = z_j \circ z_i$.
     If, for all $a,  b \in X$, for all $z_{i,j}\in Y$, we set  $a\bullet_{z_{ij}}b := a \circ z_i + b \circ z_j,$ $h(z_{ij}) = z_i \circ z_j,$ then
     \begin{enumerate}
         \item[$(a)$] $~\sigma_{a}^{z_{ij}}(b) = z_i^{-1} - a \circ z_i^{-1} \circ z_j + a \circ b\circ z_j.$
         \item[$(b)$] $~\sigma^{z_{ik}}_a\left(\sigma^{z_{ij}}_b\left(c\right)\right) = \sigma^{z_{ij}}_{\sigma^{z_{jk}}_a\left(b\right)}\left(\sigma^{z_{ik}}_{\tau^{z_{jk}}_{b}(a)}(c)\right),$ 
     i.e. $\sigma^{z_{ij}}_a$ provides an admissible twist.
     \end{enumerate}
\end{enumerate}
\begin{proof}\hspace{1mm}  
\begin{enumerate}   
\item 
From conditions in \eqref{eqq}, we obtain that $\sigma^{z_{ij}}_{a}(b)^{-1} \circ a \circ b = (\sigma^{z_{ji}}_{\sigma^{z_{ij}}_a(b)})^{-1}(a)$, for all $z_{i,j}\in Y$, $a,b \in X$, that implies
\begin{align}\label{tipo_affine}
a \circ (\sigma^{z_{ij}}_a)^{-1}(b) = b \circ (\sigma^{z_{ji}}_b)^{-1}(a),
\end{align}
for all $z_{i,j}\in Y$, $a,b \in X$, and so $a\bullet_{z_{ji}}b = b\bullet_{z_{ij}}a$.

\item  
$(a)$ For all $a,b \in X$, $z_{i, j} \in Y$, it follows that
\begin{align*}
a\bullet_{z_{ji}}\sigma^{z_{ij}}_a(b) =  a \circ b \circ z_i \circ z_j \ &\Rightarrow \ a \circ z_j + \sigma_a^{z_{ij}}(b)\circ z_i =  a\circ b \circ z_i \circ z_j\  \\ & \Rightarrow\sigma_{a}^{z_{ij}}(b) = z_i^{-1} - a \circ z_i^{-1} \circ z_j + a \circ b\circ z_j.\nonumber
    \end{align*}
$(b)$ It immediately follows from the explicit form of the map $\sigma^{z_{ij}}_{a}(b)$ above.
\hfill \qedhere
\end{enumerate}
    \end{proof}
        \end{pro}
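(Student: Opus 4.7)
The plan is to handle the three claims separately, since parts (2)(a) and (2)(b) live in the brace setting whereas (1) only uses the ambient group $(X,\circ)$.

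For part (1), the strategy is to eliminate $\tau$ in favour of $\sigma$. From the first half of \eqref{eqq} one obtains $\tau^{z_{ij}}_b(a) = (\sigma^{z_{ij}}_a(b))^{-1}\circ a\circ b$; feeding this into the second half and using the bijectivity of $\sigma^{z_{ij}}_a$, I would arrive, after the substitution $b'=\sigma^{z_{ij}}_a(b)$, at the ``affine'' identity
\[
  a\circ(\sigma^{z_{ij}}_a)^{-1}(b') \;=\; b'\circ(\sigma^{z_{ji}}_{b'})^{-1}(a),
\]
which is precisely \eqref{tipo_affine}. Right-multiplying by $h(z_{ij})$ and exploiting the symmetry $h(z_{ij}) = h(z_{ji})$ then immediately yields $a\bullet_{z_{ji}}b = b\bullet_{z_{ij}}a$.

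For part (2)(a), the plan is to exploit the fact that the two proposed descriptions of $\bullet$ must agree. Setting $c := \sigma^{z_{ij}}_a(b)$ and equating
\[
  a\circ z_j + c\circ z_i \;=\; a\bullet_{z_{ji}}c \;=\; a\circ b\circ h(z_{ij}) \;=\; a\circ b\circ z_i\circ z_j,
\]
I would solve for $c\circ z_i$ in the abelian additive group and then right-multiply by $z_i^{-1}$. The finish uses three brace ingredients: the distributivity identity $(x+y)\circ z = x\circ z - z + y\circ z$ for $z\in\mathcal{D}(X)$, applied with $z = z_i^{-1}\in\mathcal{D}(X)$ (since $\mathcal{D}(X)$ is a $\circ$-subgroup); the consequent rule $(-x)\circ z = 2z - x\circ z$ obtained by taking $a+b = 0$ in the distributivity; and the $\circ$-commutativity of $z_i,z_j$, which yields $z_i\circ z_j\circ z_i^{-1} = z_j$ and $z_j\circ z_i^{-1} = z_i^{-1}\circ z_j$. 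After combining the resulting $2z_i^{-1}$ and $-z_i^{-1}$ summands, the formula $\sigma^{z_{ij}}_a(b) = z_i^{-1} - a\circ z_i^{-1}\circ z_j + a\circ b\circ z_j$ emerges.

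For part (2)(b), the plan is direct substitution of the explicit formula from (a) into both sides of the twist identity. To keep the bookkeeping under control I would switch to the linearization $\lambda_a(b):= -a + a\circ b$, which is an additive endomorphism of $(X,+)$ satisfying $\lambda_{a\circ a'} = \lambda_a\lambda_{a'}$, so that $a\circ b = a+\lambda_a(b)$ and
\[
  \sigma^{z_{ij}}_a(b) \;=\; z_i^{-1} + \lambda_a\!\left(b\circ z_j - z_i^{-1}\circ z_j\right).
\]
One then expands each side of (b) as a sum in $(X,+)$, expresses $\tau^{z_{jk}}_b(a)$ through the first equation of \eqref{eqq}, and checks coincidence term by term, invoking the $\circ$-commutation of $z_i,z_j,z_k$ throughout. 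This last verification is the main obstacle: it is routine in flavour but lengthy, and the cleanest way to align the cancellations is to invoke repeatedly the symmetry $a\bullet_{z_{ji}}b = b\bullet_{z_{ij}}a$ from part (1), which allows the swap of $(a,z_j)\leftrightarrow (b,z_i)$ inside nested $\circ$-products and collapses the two sides to a common canonical form.
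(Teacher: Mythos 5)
Your proposal is correct and follows essentially the same route as the paper: part (1) by eliminating $\tau$ via the first identity of \eqref{eqq} and substituting into the second to obtain \eqref{tipo_affine}, part (2)(a) by equating the two expressions for $a\bullet_{z_{ji}}\sigma^{z_{ij}}_a(b)$ and solving with the right-distributor property of $z_i^{-1}$, and part (2)(b) by direct substitution of the explicit formula. You merely spell out the intermediate brace manipulations (the rule $(-x)\circ z = 2z - x\circ z$, the $\circ$-commutation of the parameters, the $\lambda$-linearization) that the paper's terser proof leaves implicit.
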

We conclude from Proposition \ref{p1} that the maps $\sigma^{z_{ij}}_{a}(b)$ and $\tau^{z_{ij}}_b(a) =\sigma_a^{z_{ij}}(b)^{-1} \circ a \circ b$  
provide a reversible solution $R^{z_{ij}} (b, a) = (\sigma^{z_{ij}}_{a}(b),\tau^{z_{ij}}_b(a))$ of the parametric set-theoretic braid equation.

\begin{rem}\label{rem42} {(Reflection map.)} 
Let $(X,+, \circ)$ be a brace, $Y = {\cal D}(X)$, and $z_i \circ z_j=z_j\circ z_i$, for all $z_{i,j}\in Y$. Consider the reversible solution $R^{z_{ij}}(b,a)=\left(\sigma_a^{z_{ij}}(b), \tau_b^{z_{ij}}(a)\right)$  associated with $(X,+, \circ)$ where, for all $a,b\in X$ and $z_{i,j}\in Y$, $\sigma_a^{z_{ij}}(b) =z_i^{-1}-a\circ z_i^{-1} \circ z_j+ a\circ b\circ z_j $ and $\tau_b^{z_{ij}}(a) = \sigma_a^{z_{ij}}(b)^{-1}\circ a \circ b.$ 
By \cref{reflectionA}, it follows that the map $K^{z_i}: X \to X$ such that $a \mapsto \kappa^{z_i}(a) : = a \circ z_i^{-1} \circ z_i^{-1} \circ \zeta -\zeta,$ for all $a \in X$ and $z_i \in Y$, where $\zeta \in Y$, and $\mu: Y\to Y,$ $\mu(z_i) = z_i^{-1}$, for all $z_i\in Y$,  is a solution of the reflection equation for the solution $R^{z_{ij}}$ above 
if $\zeta\circ z_i = z_i + \zeta$, for all  $z_i \in Y$. 
\end{rem}

\begin{rem}\label{rem-integrability} ({\bf Solutions from integrability}) We present some known examples of reversible parametric set-theoretic solutions coming from the refactorization of Lax operators (see e.g.  \cite{Adler, Papa2, Veselov}) and extract the associated binary operation $\bullet_{z_{ij}}$ for these solutions. In fact, we focus on families of reversible solutions presented in \cite{Papa2} that satisfy the structure group relation given by $a\circ b = \sigma^{z_{ij}}_a(b) \circ \tau^{z_{ij}}_b(a),$ where $(X, \circ)$ is a group ($X = \mathbb{CP}^1$ for the following solutions):
\begin{enumerate}
   \item[$(1)$] $\sigma^{z_{ij}}_a(b) =a  - P_{z_{ij}}(a,b)$ and $\tau_b^{z_{ij}}(a) = b+P_{z_{ij}}(a,b),$ where $P_{z_{ij}}(a,b) = \frac{z_i -z_j}{a +b}.$\\
In this case, $a\circ b = a+b.$ We derive $(\sigma_a^{z_{ij}})^{-1}(b) = -a + \frac{z_i-z_j}{a-b}$ and hence $a\bullet_{z_{ij}}b =\frac{z_j-z_i}{a-b}.$ We also observe that $a\bullet_{z_{ij}} b = b \bullet_{z_{ji}} a$ as expected. 

$ $

   \item[$(2)$]  $\sigma^{z_{ij}}_a(b) =a\ Q_{z_{ij}}^{-1}(a,b)$ and $\tau_b^{z_{ij}}(a) = b\ Q_{z_{ij}}(a,b),$ where $Q_{z_{ij}}(a,b) = \frac{z_iab +1}{z_ja b+1}.$\\ In this case, $a\circ b = ab$. We derive $(\sigma_a^{z_{ij}})^{-1}(b) =a^{-1} \frac{b-a}{z_ja -z_ib}$ and hence $a\bullet_{z_{ij}} b = \frac{b-a}{z_ia -z_jb},$ also
$a\bullet_{z_{ij}} b = b \bullet_{z_{ji}} a.$    

$ $
   
   \item[$(3)$] $\sigma^{z_{ij}}_a(b) =a\ Q_{z_{ij}}^{-1}(a,b)$ and $\tau_b^{z_{ij}}(a) = b\ Q_{z_{ij}}(a,b),$ where $Q_{z_{ij}}(a,b) = \frac{z_i +(z_j -z_i)a - z_j ab }{z_j + (z_i-z_j) b-z_i ab}.$\\
In this case $a\circ b = ab$. We obtain that $(\sigma_a^{z_{ij}})^{-1}(b) =a^{-1} \frac{z_i b +(z_j-z_i)ab-z_ja}{z_jb  +(z_i -z_j)-z_ia}$ and hence $a\bullet_{z_{ij}} b = \frac{z_j b +(z_i-z_j)ab-z_ia}{z_ib  +(z_j -z_i)-z_ja},$ also
$a\bullet_{z_{ij}} b = b \bullet_{z_{ji}} a.$       

$ $
   
   \item[$(4)$] $\sigma^{z_{ij}}_a(b) =a\ Q_{z_{ij}}^{-1}(a,b)$ and $\tau_b^{z_{ij}}(a) = b\ Q_{z_{ij}}(a,b),$ where $Q_{z_{ij}}(a,b) = \frac{(1-z_j)ab +(z_j-z_i) a +z_j(z_i-1)}{(1-z_i)ab +(z_i-z_j) b +z_i(z_j-1)}.$\\
   In this case $a\circ b = ab$.    We derive $(\sigma_a^{z_{ij}})^{-1}(b) =a^{-1} \frac{z_j(z_i-1)b+(z_j-z_i)ab-  z_i(z_j-1)a}{(1-z_i)a  +(z_i -z_j)-(1-z_j)b}$ and hence $a\bullet_{z_{ij}} b =  \frac{z_i(z_j-1)b+(z_i-z_j)ab-  z_j(z_i-1)a}{(1-z_j)a  +(z_j -z_i)-(1-z_i)b},$ also
$a\bullet_{z_{ij}} b = b \bullet_{z_{ji}} a.$       \end{enumerate}

Reflection maps for set-theoretic solutions from integrability were studied in \cite{CauCra, Cau2}.
The properties of $(X, \bullet_{z_{ij}}, \circ)$ associated to the above solutions merit further investigation, which however will be undertaken in future works. Note however, that in the subsequent section we introduce algebraic structures $(X, +_{z_{ij}}, \circ)$ (where $(X,\circ )$ is a group) that generalize the known (skew) braces and are called (skew) $p$-braces.
\end{rem}

\subsection*{B. General case}
\noindent We recall from Section 3 (see also \cite{DoRySt}) that set-theoretic solutions, which are the main focus of this subsection, are constructed from the $p$-rack solutions via an admissible Drinfel'd twist.

The following proposition is useful for describing general set-theoretic solutions.
\begin{pro} \label{p2} 
Let $\left(X, \triangleright_{z_{ij}}\right)$ be a $p$-shelf, $\sigma^{z_{ij}}_a, \tau_b^{z_{ij}}: X \to X$ maps such that for all $a,b\in X,$ $z_{i,j}\in Y \subseteq X,$ $\sigma^{z_{ij}}_a$ is a bijection and $\tau^{z_{ij}}_{b}(a) = (\sigma^{z_{ji}}_{\sigma^{z_{ij}}_a(b)})^{-1}(\sigma^{z_{ij}}_a(b)\, \triangleright_{z_{ij}} a).$  Moreover, let $(X, \circ)$ be a group such that, for all $a,b \in X$ $z_{i,j}\in Y$,
$a\circ b = \sigma^{z_{ij}}_a(b) \circ \tau^{z_{ij}}_{b}(a)$ and define $\bullet_{z_{ij}}: X\times X \to X$ by $a \bullet_{z_{ji}} b := a\circ (\sigma^{z_{ij}}_a)^{-1}(b)\circ h(z_{ij}),$ for all $a, b \in X$, $z_{i,j} \in Y,$ where $h: Y \times Y\to Y$ is a symmetric function.
Then, for all $a,b \in X,$ $z_{i,j} \in Y:$ 
\begin{enumerate}
    \item[$(1)$] $(a)$\ $a \bullet_{z_{ji}} b = b\bullet_{z_{ij}} (b \triangleright_{z_{ij}} a)$,\\ 
$(b)$\  $a\bullet_{z_{ji}} \sigma^{z_{ij}}_a(b) =  a\circ b\circ h({z_{ij}}).$ 
    \item[$(2)$] Assume that $(X,+, \circ)$ is a skew brace, $Y \subseteq {\cal D}(X) \cap Z(X,+)$ and also for all $z_{i,j}\in Y,$  $z_i\circ z_j = z_j \circ z_i$. Set, for all $a,b \in X$ and $z_{i,j} \in Y,$ $a\bullet_{z_{ij}} b := a\circ z_i+b\circ z_j,$ also $h(z_{ij}) =z_i \circ z_j.$  Then, for all $a,b,c \in X$ and $z_{i,j} \in Y, $
    \begin{enumerate} 
    \item[$(a)$]  $~ b\triangleright_{z_{ij}} a =-b\circ z_i \circ z_j^{-1}+  a + b \circ z_i \circ z_j^{-1}$ 
    and $\sigma^{z_{ij}}_a(b) = z_i^{-1}-a\circ z_i^{-1}\circ z_j + a\circ b\circ z_j,$
    \item[$(b)$] $\sigma^{z_{ik}}_a(\sigma^{z_{ij}}_b(c)) = \sigma^{z_{ij}}_{\sigma^{z_{jk}}_a(b)}(\sigma^{z_{ik}}_{\tau^{z_{jk}}_{b}(a}(c))$ and $\sigma^{z_{ik}}_c(b) \triangleright_{z_{ij}} \sigma^{z_{jk}}_c(a) = \sigma^{z_{jk}}_c(b \triangleright_{z_{ij}} a)$ i.e., $\sigma^{z_{ij}}_a$ provides an admissible twist. 
    \end{enumerate}
    \end{enumerate}
    \begin{proof}

$ $
    \begin{enumerate}
        \item  $(a)$ Using the definitions of  $\bullet_{z_{ji}}$ and $\triangleright_{z_{ij}}$, we get
       for all $a,b \in X,$ $z_{i,j} \in Y,$ 
        \begin{equation}
        b \bullet_{z_{ij}} (b \triangleright_{z_{ij}} a) = b \bullet_{z_{ij}}  \sigma^{z_{{ji}}}_b\left(\tau^{z_{ij}}_{(\sigma^{z_{ij}}_a)^{-1}(b)}(a)\right) =    
        b \circ \tau^{z_{ij}}_{(\sigma^{z_{ij}}_a)^{-1}(b)}(a)\circ h(z_{ij}).    \nonumber
        \end{equation}
   But, due to the structure group condition $a\circ b = \sigma^{z_{ij}}_a(b) \circ \tau^{z_{ij}}_{b}(a)$, we conclude that 
   $$a \bullet_{z_{ji}} b = b\bullet_{z_{ij}} (b \triangleright_{z_{ij}} a).$$

\noindent $(b)$ From the definition of $a\bullet_{z_{ij}} b$, and the fact that $\sigma_a$ is bijection, it immediately follows that
\begin{equation}
a\bullet_{z_{ji}} \sigma^{z_{ij}}_a(b) = a\circ \left(\sigma^{z_{ij}}_a\right)^{-1}\left(\sigma^{z_{ij}}_a(b)\right)\circ h(z_{ij}) = a \circ b \circ h(z_{ij}). \nonumber
\end{equation}

\item $(a)$ The first part follows immediately from (1) $(a)$. From (1) $(b)$
 we immediately conclude that $\sigma^{z_{ij}}_a(b) = z_i^{-1} -a\circ z_i^{-1} \circ z_j +a \circ b \circ z_j.$

\noindent $(b)$ The  two conditions follow from the explicit form of $\sigma_a^{z_{ij}}$ above. 
\hfill \qedhere
\end{enumerate}
\end{proof}
    \end{pro}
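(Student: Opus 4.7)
The plan is to prove part (1) directly from the definitions of $\bullet_{z_{ij}}$ and the structure group identity $a\circ b = \sigma^{z_{ij}}_a(b)\circ\tau^{z_{ij}}_b(a)$, and then to derive part (2) by specializing to the skew brace setting, where the identities become explicit enough to solve algebraically for $\triangleright_{z_{ij}}$ and $\sigma^{z_{ij}}_a$.

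For (1)(b), simply unfolding the definition of $\bullet_{z_{ji}}$ gives $a\bullet_{z_{ji}}\sigma^{z_{ij}}_a(b) = a\circ(\sigma^{z_{ij}}_a)^{-1}(\sigma^{z_{ij}}_a(b))\circ h(z_{ij}) = a\circ b\circ h(z_{ij})$. For (1)(a), the key step is to substitute $b' := (\sigma^{z_{ij}}_a)^{-1}(b)$, so that $b = \sigma^{z_{ij}}_a(b')$. The assumed formula for $\tau^{z_{ij}}_{b'}(a)$ then reads $(\sigma^{z_{ji}}_b)^{-1}(b\triangleright_{z_{ij}} a) = \tau^{z_{ij}}_{b'}(a)$; multiplying on the left by $b$, invoking the structure group relation $b\circ\tau^{z_{ij}}_{b'}(a) = \sigma^{z_{ij}}_a(b')\circ\tau^{z_{ij}}_{b'}(a) = a\circ b'$, and multiplying on the right by $h(z_{ij}) = h(z_{ji})$, yields $b\bullet_{z_{ij}}(b\triangleright_{z_{ij}} a) = a\circ b'\circ h(z_{ij}) = a\bullet_{z_{ji}} b$, which is the claim.

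For (2)(a), I would specialize the identity in (1)(a) to the explicit $\bullet$, obtaining $a\circ z_j + b\circ z_i = b\circ z_i + (b\triangleright_{z_{ij}} a)\circ z_j$, hence $(b\triangleright_{z_{ij}} a)\circ z_j = -b\circ z_i + a\circ z_j + b\circ z_i$. To invert and extract the closed form for $b\triangleright_{z_{ij}} a$, I would compute $(-b\circ z_i\circ z_j^{-1} + a + b\circ z_i\circ z_j^{-1})\circ z_j$ using the distributor property ($z_j\in\mathcal{D}(X)$) and centrality of $z_j$ in $(X,+)$, and check directly that the result matches the right-hand side above; this gives the stated formula for $\triangleright_{z_{ij}}$. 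An entirely analogous computation starting from (1)(b), namely $\sigma^{z_{ij}}_a(b)\circ z_i = -a\circ z_j + a\circ b\circ z_i\circ z_j$, followed by application of $\circ z_i^{-1}$ with the same distributor/centrality toolkit, produces the formula for $\sigma^{z_{ij}}_a(b)$. Part (2)(b) is then a direct, though lengthy, verification on these explicit formulas, using the commutativity $z_i\circ z_j = z_j\circ z_i$ inherited from $Y\subseteq Z(X,+)\cap\mathcal{D}(X)$.

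The main obstacle I anticipate lies in part (2): because the skew brace operations do not commute and the distributor $\mathcal{D}(X)$ gives only the weak identity $(a+b)\circ z = a\circ z - z + b\circ z$, every rearrangement of terms of the form $\pm u\circ z_k$ inside a $+$-expression requires careful use of the derived identity $(-u)\circ z = z - u\circ z$ (obtained by applying the distributor to $0 = u + (-u)$) together with centrality of each $z_k$ in $(X,+)$. Once this algebraic choreography is worked out, both the inversion producing the closed form of $\sigma^{z_{ij}}_a$ and the verification of the admissible twist conditions in (2)(b) reduce to mechanical manipulation.
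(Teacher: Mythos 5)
Your proposal follows essentially the same route as the paper: (1)(a) via the substitution $b' := (\sigma^{z_{ij}}_a)^{-1}(b)$ combined with the structure-group relation $b\circ\tau^{z_{ij}}_{b'}(a)=a\circ b'$, (1)(b) by unfolding the definition of $\bullet_{z_{ji}}$, and (2) by specializing these identities to the skew-brace setting and extracting the closed forms of $\triangleright_{z_{ij}}$ and $\sigma^{z_{ij}}_a$, with (2)(b) as a direct check on those formulas — exactly the paper's (much terser) argument.

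One correction to the toolkit you announce for part (2): applying the distributor identity to $0=u+(-u)$ gives $z=u\circ z-z+(-u)\circ z$, hence $(-u)\circ z=z-u\circ z+z$ (equivalently $2z-u\circ z$ when $z\in Z(X,+)$), not $(-u)\circ z=z-u\circ z$ as you state. With your version of the identity the verification of, e.g.,
\begin{equation*}
\bigl(-b\circ z_i\circ z_j^{-1}+a+b\circ z_i\circ z_j^{-1}\bigr)\circ z_j=-b\circ z_i+a\circ z_j+b\circ z_i
\end{equation*}
comes out off by a summand $-z_j$, so the check you propose would appear to fail. With the corrected identity (and centrality of the $z_k$ in $(X,+)$) the computation closes exactly as you describe, and the same remark applies to the inversion producing $\sigma^{z_{ij}}_a(b)=z_i^{-1}-a\circ z_i^{-1}\circ z_j+a\circ b\circ z_j$ and to the verifications in (2)(b).
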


Notice that the binary operation $\bullet_{z_{ij}}$ such that it satisfies condition (1) $(a)$ in \cref{p2} is not uniquely defined. However, based on the definition of the operation $\bullet_{z_{ij}}$ 
we provide below a classification of bijective set-theoretic solutions given a specific type of $p$-rack.

%


\begin{exa}\label{ex_conj}
(\textbf{Conjugate $p$-rack}) \ 
Let $(X,+, \circ)$ be a skew brace, $Y \subseteq {\cal D}(X) \cap Z(X,+)$, and also for all $z_{i,j}\in Y,$  $z_i\circ z_j = z_j \circ z_i$. Set, for all $a,b \in X$ and $z_{i,j} \in Y$, 
\begin{center}
 $~ a\triangleright_{z_{ij}} b =-a\circ z_i \circ z_j^{-1}+  b + a \circ z_i \circ z_j^{-1}$.    
\end{center}
Then, $\left(X, \triangleright_{z_{ij}}\right)$ is the $p$-rack obtained also in \cref{exs_p_shelves}, that we call \emph{conjugate $p$-rack}.
Recall that the map
$\sigma^{z_{ij}}_a(b) = z_i^{-1}-a\circ z_i^{-1}\circ z_j + a\circ b\circ z_j$ satisfies condition $(2)$ of Definition \ref{admi2}. We also confirm that condition $(1)$ of Definition \ref{admi2} is equivalent to 
$a\circ b = \sigma^{z_{ij}}_a(b) \circ \tau^{z_{ij}}_b(a)$, for all $a, b \in X$, hence the maps $\sigma^{z_{ij}}_a(b)$ provides a solution. 
Note that this corresponds to the solution derived in \cite[Proposition 2.15]{Doikoup}.

By condition \eqref{basic0} in \cref{reflectionA}, for all $z_i \in Y$, the map $K^{z_i}: X \to X$  given by $\kappa^{z_i}(a) := a \circ z_i^{-1} \circ z_i^{-1} + m\zeta$, with $m \in \mathbb{Z}$ and $\zeta \in Z(X,+),$ is a bijective reflection for the general set-theoretic solution  $R^{z_{ij}}(b,a) = (\sigma_a^{z_{ij}}(b), \tau_b^{z_{ij}}(a))$
if and only if 
\begin{equation}
\forall\, a \in X \quad a+m\zeta=a \circ \left(m\zeta\right), \qquad \forall\, z_i \in Y \quad \left(m \zeta\right) \circ  z_i= m\zeta + z_i.  \label{cond0}
\end{equation}
If we relax conditions  \eqref{cond0}, then $K^{z_i},$ as defined above, 
is a reflection only for the $p$-rack solution $S^{z_{ij}}(b, a) = (b, b \triangleright_{z_{ij}} a)$, for all $a,b\in X$, $z_{i,j}\in Y$ (see also \cref{Example-refl}).
\end{exa}

\smallskip

\noindent Below, we provide further examples of set-theoretic solutions of the Yang-Baxter equation and associated reflection maps.

\begin{exa} ({\bf Affine $p$-rack}) 
Let $(X,+, \circ)$ be a skew brace, $Y \subseteq {\cal D}(X) \cap Z(X,+)$, and assume for all $z_{i,j}\in Y,$  $z_i\circ z_j = z_j \circ z_i$. Let also $\xi\in X$ and define $a\bullet_{z_{ji}} b :=   \xi\circ a\circ z_j+ b \circ z_i$,
for all $a,b \in X$. Then, from Proposition \ref{p2} (1) $(a)$, we  obtain
\begin{center}
    $a\triangleright_{z_{ij}} b = - \xi \circ a \circ z_i\circ z_j^{-1} + \xi\circ b +a \circ z_i \circ z_j^{-1}.$
\end{center}
Then  $(X, \triangleright_{z_{ij}})$ is a $p$-rack which we call the \emph{affine $p$-rack} (see also \cref{ex_rack_skew}).
Now, from \cref{p2}, if we consider $h({z_{ij}}) =  \xi \circ z_i \circ z_j,$ for all $z_{i,j} \in Y$, we obtain the map $\sigma^{z_{ij}}_a(b)$ defined by $$\sigma^{z_{ij}}_a(b) =  z_i^{-1} -  \xi\circ a  \circ z_i^{-1} \circ z_j +  a \circ b \circ \xi \circ  z_j,$$ 
for all $a, b \in X$, $z_{i,j} \in Y$, that also satisfies the identity $a\circ b = \sigma^{z_{ij}}_a(b)\circ \tau^{z_{ij}}_b(a)$. If, in addition,
$\xi\in D(X)\cap Z\left(X,\circ \right)$, the map $\sigma^{z_{ij}}_a$ satisfies
conditions (1) and (2) of Definition \ref{admi2}. Hence, by \cref{le:lndsol}, the map $R^{z_{ij}}(b,a) = \left(\sigma_a^{z_{ij}}(b), \tau_b^{z_{ij}}(a)\right)$ is a solution.

Now, assume $\zeta \in Z(X, +),$ such that  $\xi \circ \left(m\zeta\right) =\xi+ m\zeta$, for all $m\in\mathbb{Z}$. 
Then, by condition \eqref{basic0} in \cref{reflectionA}, the map $K^z_i: X \to X$  given by $\kappa^{z_i}(a) := a \circ z_i^{-1} \circ z_i^{-1}+m\zeta$ is a bijective reflection for the general solution $R^{z_{ij}}(b,a) = \left(\sigma_a^{z_{ij}}(b), \tau_b^{z_{ij}}(a)\right)$ if and only if conditions in \eqref{cond0} are satisfied.
If we relax condition \eqref{cond0}, then $K^{z_i}$ as defined above is a reflection 
only for the $p$-rack solution $S^{z_{ij}}(b, a) = (b, b \triangleright_{z_{ij}} a)$ (see also \cref{Example-refl}).
\end{exa}

\begin{exa}
 ({\bf Core $p$-rack})
Let $(X, +, \circ )$ be a skew brace, $Y\subseteq \mathcal{D}(X)$, $z_i\circ z_j  = z_j \circ z_i$, for all  $z_{i,j}\in Y$, and define, for all $a,b \in X,$ $z_{i,j}\in Y$ $$a \triangleright_{z_{ij}} b  
= a \circ z _i\circ z_j^{-1}-b +a\circ z_i \circ z_j^{-1}.$$ Then $(X, \triangleright_{z_{ij}})$ is a $p$-rack, which we call the core \emph{$p$-rack} (note that this example coincides with $(2)$ of \cref{shelves} assuming that, more specifically, $z_i\in Fix(X)$, for all $z_i\in Y$, where $Fix(X):=\{z\in X \mid \forall\ a\in X \ a\circ z = a + z\}\subseteq \mathcal{D}\left(X\right)$). We distinguish two cases: $ $
\begin{enumerate}[$(a)$]
\item We define for all $a,b \in X,$ $z_{i,j} \in Y,$ $a\bullet_{z_{ij}} b := a\circ z_i - b \circ z_j.$ 
Then according to Proposition \ref{p2} for $h(z_{ij}) = z_i \circ z_j $, $$\sigma^{z_{ij}}_a(b) =z_i^{-1} - a \circ b \circ z_j+ a \circ z_i^{-1}\circ z_j.$$
Also, $\sigma^{z_{ij}}_a$ satisfies conditions $(1)$ and $(2)$ of Definition \ref{admi2} 
and so it provides a solution.
$ $
\item  We define for all $a,b \in X,$ $z_{i,j} \in Y,$ $a\bullet_{z_{ij}} b := -a\circ z_i + b \circ z_j.$
Then according to Proposition \ref{p2} for $h(z_{ij}) = z_i \circ z_j $, $$\sigma^{z_{ij}}_a(b) = a\circ z_i^{-1}\circ z_j  -z_i^{-1} + a \circ b \circ z_j.$$

$\sigma^{z_{ij}}_a$ satisfies conditions $(1)$ and $(2)$ of Definition \ref{admi2} and as such it provides a solution. Notice, in particular that $\sigma^{z_{ik}}_a(\sigma^{z_{ij}}_b(c)) = \sigma^{z_{i,jk}}_{a\circ b}(c),$ for all $a, b,c \in X,$ if and only if $(X,+,\circ)$ is a brace; also, recall that $z_{i,jk}$ denotes dependence on $(z_i, z_{j}\circ z_k).$

\end{enumerate}

Now, the map $K^{z_i}: X \to X,$ $a \mapsto \kappa^{z_i}(a) := a \circ z_i^{-1} \circ z_i^{-1} + \zeta,$ with $\zeta \in Z(X,+)$ an element of order $2$ and such that $\zeta\circ z_i = z_i + \zeta$, for all $z_i\in Y$, is a reflection for the $p$-rack solution $S^{z_{ij}}(b, a) = (b,\, b\triangleright_{z_{ij}} a)$
(see also \cref{ex_refl_core}). Then, if $a \circ \zeta=a+\zeta$, for all $a \in X$, by \cref{reflectionA} 
the map $K^{z_i}$ is a reflection for the general solutions $R^{z_{ij}}(b,a) = \left(\sigma_a^{z_{ij}}(b),\, \tau_b^{z_{ij}}(a)\right)$ above.
\end{exa}

\section{\texorpdfstring{$p$-}{}braces and skew \texorpdfstring{$p$-}{}braces}

\noindent Motivated by the results of Subsection \ref{sol-Drtwist}, we generalize the notion of \emph{affine structures} and the braces introduced by Rump \cite{Rump1, Rump2} to the parametric case. We distinguish two cases, the reversible case, where structures similar to braces are introduced, and the general invertible case, where objects similar to semi-affine structures \cite{St23} and skew braces \cite{GuaVen} are derived. The newly derived affine and semi-affine structures are also compatible with the definitions of $p$-set Yang-Baxter operators and groups introduced in Subsections 2.3 of \cite{Doikoup} and also the definitions of decorated $p$-rack algebras and $p$-set Yang-Baxter algebras introduced in Subsection 3.2 of \cite{Doikoup}.

\subsection*{A. Reversible case}
In \cite{Rump1, Rump2}, Rump introduced the notion of \emph{affine structure} on a group to describe braces. The condition \eqref{tipo_affine} arising in the proof of \cref{p1} (1) suggests introducing a similar notion in the parametric case and studying more general algebraic structures similar to braces depending on extra parameters.

\smallskip

We first introduce some useful notation. Let $(X, \circ)$ be a group and $0$ its identity, we then denote by $z_{i,jk}$ (or $i,jk$) the dependence on $(z_i, z_j \circ z_k)$. If we have dependence on two parameters $z_{i,j}$ we then simply write $z_{ij}$ (or $ij$). Also, hereinafter, we set $z_0:= 0$ and $z_{\bar{i}}:= z_{i}^{-1}.$

\begin{defn} \label{def_affine}
    Let  $\left(X,\,\circ\right)$  be a group, $Y\leq X$, and $\eta^{z_{ij}}:X\to Sym(X)$ a
	map from $\left(X,\,\circ\right)$ to the symmetric group on the set $X$ such that, for all $a,b\in X$ and $z_{i,j,k} \in Y$,
	\begin{align}\label{affine1}
	   \eta^{z_{i, jk}}_{a\circ b} = \eta^{z_{ij}}_b\eta^{z_{ik}}_a
       = \eta^{z_{i, kj}}_{a\circ b}.
	\end{align}
 Then, $\eta^{z_{ij}}$ is said a \emph{reversible $p$-affine structure} on  $\left(X,\circ\right)$ if, for all $a,b\in X$ and $z_{i, j}  \in Y$,
	\begin{align}\label{affine2}
	 a\circ\eta^{z_{ij}}_{a}\left(b\right)
	 =  b\circ\eta^{z_{ji}}_{b}\left(a\right).
	\end{align}
\end{defn}
\begin{rem}\label{rem_eta}
Note that if $\eta^{z_{ij}}$ is a reversible $p$-affine structure on a group $(X, \circ)$, then $\eta_0^{z_{i0}} = \eta_{0\circ 0}^{z_{i,00}} = \eta_0^{z_{i0}}\eta_0^{z_{i0}}$, and so $\eta_0^{z_{i0}} = \id_X$.
\end{rem}

In the following, we show that, since affine structures lead to braces, reversible $p$-affine structures provide similar brace-like structures. We call such structures \emph{$p$-braces} and we give the definition below.
\begin{defn}\label{p-brace}
    Let $\left(X,\,\circ\right)$ be a group, $Y\leq X$, and $+_{ij}$ a binary operation on $X$, for all $z_{i,j} \in Y$.
    A triple $\left(X, +_{ij}, \circ\right)$ is a \emph{$p$-brace} if, for all $a,b,c \in X$ and $z_{i, j, k} \in Y$
    \begin{enumerate}
	\item[$(1)$] $\left(a +_{ij} b\right) +_{k,ij} c = a +_{ki,j} \left(b +_{ki} c\right)$,
	\item[$(2)$] $a +_{i,kj} b=a +_{i,jk} b = b +_{jk,i} a $,
    \item[$(3)$] $a = 0 +_{i0} a$,
    \item[$(4)$] $\exists \vert\, x \in X$ such that $a +_{ij}x=0$,
    \end{enumerate}
    and
    \begin{align}\label{prop-p-brace}
        a\circ\left(b+_{k,ij}c\right) = a\circ b+_{ki} a\circ \left(a^{-1} +_{kj} c\right).
    \end{align}   
\end{defn}

    Note that condition \eqref{prop-p-brace}
    is a generalization of the distributivity brace condition \eqref{def:dis2}.

In the following we describe the additive structure of any $p$-brace.
\begin{pro}\label{rem_left_canc}
Let $\left(X, +_{ij}, \circ\right)$ be a $p$-brace and $Y \leq X$. Then for all $z_{i,j}\in Y$ the additive structure $\left(X, +_{ij} \right)$ is a quasigroup. In particular,  $\left(X, +_{i0} \right)$ is a quasigroup with a left identity and $\left(X, +_{0i} \right)$ is a quasigroup with a right identity, for all $z_{i}\in Y$.
\begin{proof}
    Let us first prove that $\left(X, +_{ij} \right)$ is a left cancellative structure. Let $a, b, x\in X$, $z_{i,j} \in Y$, and assume $x +_{ij} a = x +_{ij} b$. If $x'\in X$ is such that $x' +_{j0} x =0$, by $(1)$ and $(3)$, we have that
\begin{align*}
    a &= 0+_{ij,0}a
= (x' +_{j0} x) +_{ij,0} a
= x' +_{ij,0} + (x+_{ij} a)
=  x' +_{ij,0} + (x+_{ij} b)\\
&= (x' +_{j0} x) +_{ij,0} b
= 0+_{ij,0}b
= b.
\end{align*}
Clearly, by $(2)$, $\left(X, +_{ij} \right)$ is also right cancellative. Moreover, let $a,b\in X$ and $z_{i,j}\in Y$. Then, $x:= b +_{\bar{i}, ji} a^-\circ\left(a +_{\bar{i}\,\bar{i}} a^-\right)$ and
$y:= a\circ\left(a^- +_{i,\bar{j}} a^-\circ b\right)$
are such that $x +_{i,j} a = b$ and $a+_{i,j} y = b$. From the first part of the proof,  these elements $x$ and $y$ are unique.
Finally, the last part of the statement follows by (2) and (3) of \cref{p-brace}.
\end{proof}
\end{pro}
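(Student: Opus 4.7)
The plan is to verify the two defining properties of a quasigroup for $+_{ij}$: two-sided cancellativity together with surjectivity of both left and right translations. The four axioms of a $p$-brace are tailored to supply exactly what is needed, but they only apply after careful alignment of the parameter subscripts, so the argument interleaves the associativity law $(1)$, the commutativity-like law $(2)$, the identity law $(3)$, and the unique right-inverse axiom $(4)$, together with the subgroup hypothesis $Y \leq X$, which lets $z_i \circ z_j$ be treated again as a label of an element of $Y$.

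For left cancellativity, suppose $x +_{ij} a = x +_{ij} b$. I first produce a one-sided inverse of $x$ for $+_{j0}$: axiom $(4)$ yields a unique $x'' \in X$ with $x +_{0j} x'' = 0$, and the third equality in $(2)$ rewrites this as $x'' +_{j0} x = 0$; write $x' = x''$. Applying $(1)$ to $(x' +_{j0} x) +_{ij, 0} a$ reassociates it to $x' +_{ij, 0} (x +_{ij} a)$, possibly after an auxiliary appeal to $(2)$ to line up the tuples prescribed by $(1)$. Substituting $x' +_{j0} x = 0$ on the outer bracket collapses the expression to $0 +_{ij, 0} a$, which equals $a$ by $(3)$, applied with the index corresponding to $z_i \circ z_j \in Y$. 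Repeating the same manipulation starting from $x' +_{ij, 0} (x +_{ij} b)$ produces $b$, so $a = b$. Right cancellativity then follows from $(2)$ alone: the swap-and-reindex identity turns $a +_{ij} x = b +_{ij} x$ into an instance of the left-cancellative case already established.

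For surjectivity, I exhibit explicit solutions to $x +_{ij} a = b$ and $a +_{ij} y = b$, built from the unique inverses supplied by $(4)$ and composed according to the template of $(1)$. A natural candidate for $y$ has the shape $a \circ \bigl(a^{-1} +_{i, \bar j} a^{-1} \circ b\bigr)$, exploiting the distributivity-type compatibility \eqref{prop-p-brace} between $\circ$ and $+_{ij}$, with an analogous construction for $x$; verifying each identity again reduces to the reassociate-then-collapse pattern of $(1)$ and $(3)$, and uniqueness is automatic from the cancellativity already proved. The identity claims in the second sentence of the proposition are then immediate: $(3)$ states $0 +_{i0} a = a$, so $0$ is a left identity for $+_{i0}$; applying $(2)$ in the form $0 +_{i, 00} a = a +_{00, i} 0$ yields $a +_{0i} 0 = 0 +_{i0} a = a$, so $0$ is a right identity for $+_{0i}$.

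The main obstacle I anticipate is the parameter bookkeeping in the associativity manoeuvre: $(1)$ permutes the index tuples in a nonstandard way, and producing exactly the tuple $(z_i \circ z_j, 0)$ that makes $(3)$ applicable requires carefully chosen auxiliary invocations of $(2)$. Once the index combinatorics is laid out, the computations themselves are routine, and the overall pattern is the familiar one that a loop in which every left (resp.\ right) translation admits a canonical inverse is necessarily a quasigroup.
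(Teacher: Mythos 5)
Your proposal follows essentially the same route as the paper's own proof: left cancellativity by producing a one-sided inverse of $x$ and reassociating with axiom $(1)$ so that $(3)$ applies, right cancellativity from the swap in $(2)$, explicit divisors built from $(4)$ together with the compatibility \eqref{prop-p-brace} (your $y=a\circ\bigl(a^{-1}+_{i\bar j}a^{-1}\circ b\bigr)$ is exactly the element the paper exhibits), uniqueness from cancellativity, and the identity statements from $(2)$ and $(3)$.

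On the one point you flagged as uncertain: the reassociation cannot be run with $x'+_{j0}x=0$. Axiom $(1)$, in the generic form $(x'+_{(u,v)}x)+_{(w,\,u\circ v)}a=x'+_{(w\circ u,\,v)}(x+_{(w,u)}a)$, forces the outer second index on the left to be $u\circ v$; with $(u,v)=(j,0)$ that index is $z_j$, not $0$, so $(x'+_{j0}x)+_{ij,0}a$ is simply not an instance of $(1)$, and no auxiliary appeal to $(2)$ lines the tuples up (the same slip occurs in the printed proof, so you are in good company). The repair is to choose $x'$ with $x'+_{j\bar j}x=0$: axiom $(4)$ gives a unique $w$ with $x+_{\bar j\, j}w=0$, and the swap in $(2)$ turns this into $w+_{j\bar j}x=0$. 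Then $(1)$ with $(u,v)=(j,\bar j)$, $w=i$ yields $(x'+_{j\bar j}x)+_{i0}a=x'+_{ij,\bar j}(x+_{ij}a)$, whose left side is $0+_{i0}a=a$ directly by $(3)$, and the chain $a=x'+_{ij,\bar j}(x+_{ij}a)=x'+_{ij,\bar j}(x+_{ij}b)=b$ closes as you intended. With that substitution your argument is complete and coincides with the (corrected) argument of the paper.
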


In the following theorem, we show that reversible $p$-affine structures yield $p$-braces and reversible solutions of the Yang-Baxter equation.
\begin{thm}\label{thm_affine}
    Let $(X,\,\circ)$, be a group, $Y \leq X$, and $\eta^{z_{ij}}:X\to Sym(X)$ a reversible $p$-affine structure. Define, for all $a,b\in X$ and $z_{i,j} \in Y$, the following binary operation on $X$
\begin{align*}
a+_{ij}b:=            a\circ\eta^{z_{ij}}_{a}\left(b\right).
\end{align*}
Then, the triple $\left(X, +_{ij}, \circ \right)$ is a $p$-brace. In addition, the maps defined by 
\begin{center}
    $\sigma^{z_{ij}}_a(b):= (\eta_{a}^{z_{ij}})^{-1}(b)$ \quad \text{and} \quad $\tau^{z_{ij}}_b\left(a\right):=
\sigma_a^{z_{ij}}\left(b\right)^{-1}\circ a\circ b$, 
\end{center}for all $a, b \in X$ and $z_i,z_j \in Y$, determine a reversible solution $R^{z_{ij}}(b, a)=\left(\sigma^{z_{ij}}_a(b), \tau^{z_{ij}}_b\left(a\right)\right)$ on~$X$.
\end{thm}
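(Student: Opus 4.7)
The plan is to verify each axiom of \cref{p-brace} and then construct the reversible Yang--Baxter solution via \cref{le:lndsol} and \cref{admi3}. I would begin with the easy axioms. Axiom $(3)$ follows at once from $\eta_0^{z_{i0}} = \id_X$ (\cref{rem_eta}); the inner equality of $(2)$ reduces, after unfolding the definition, to $\eta_a^{z_{i,jk}} = \eta_a^{z_{i,kj}}$, which is the two-form symmetry of \eqref{affine1} specialized at $b = 0$; the outer equality of $(2)$ is precisely \eqref{affine2} with the single index $j$ replaced by the composite $jk$; and $(4)$ follows from the fact that $\eta_a^{z_{ij}} \in \mathrm{Sym}(X)$, yielding the unique right inverse $x = (\eta_a^{z_{ij}})^{-1}(a^{-1})$ with respect to $+_{ij}$.

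The associativity $(1)$ and the distributivity \eqref{prop-p-brace} are the main calculations. For $(1)$, I would expand $(a+_{ij}b)+_{k,ij}c = a\circ \eta_a^{z_{ij}}(b)\circ \eta^{z_{k,ij}}_{a\circ \eta_a^{z_{ij}}(b)}(c)$ and apply the cocycle identity \eqref{affine1} to the outermost $\eta$ to reorganize the composition. Similarly expanding $a+_{ki,j}(b+_{ki}c)$ and applying \eqref{affine1} again, both sides should reduce to a common expression in the elementary two-index $\eta$-maps. The distributivity \eqref{prop-p-brace} is handled analogously: after cancelling the leading factor $a$ via left cancellation in $(X,\circ)$, it reduces to another composition identity among $\eta$-maps that follows from \eqref{affine1}.

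For the Yang--Baxter part, the structure group relation $a\circ b = \sigma_a^{z_{ij}}(b)\circ \tau_b^{z_{ij}}(a)$ is immediate from the definition of $\tau$. Setting $b' := \sigma_a^{z_{ij}}(b) = (\eta_a^{z_{ij}})^{-1}(b)$, the reversibility condition $\sigma^{z_{ji}}_{b'}(\tau_b^{z_{ij}}(a)) = a$ in \eqref{eqq} becomes $b'\circ \eta^{z_{ji}}_{b'}(a) = a\circ \eta_a^{z_{ij}}(b')$, which is precisely \eqref{affine2}; moreover, $\tau_b^{z_{ij}}$ then agrees with the formula of \cref{admi3}. It then remains to check that $\sigma_a^{z_{ij}}$ is an admissible twist (\cref{admi2}) with respect to the trivial $p$-shelf $a\triangleright_{z_{ij}}b = b$: condition $(2)$ is vacuous, while condition $(1)$ reduces, after inverting the $\sigma$-maps, to $\eta_b^{z_{ij}}\eta_a^{z_{ik}} = \eta_{b'}^{z_{ik}}\eta_{a'}^{z_{ij}}$, where $a' := \sigma^{z_{jk}}_a(b)$ and $b' := \tau^{z_{jk}}_b(a) = \eta^{z_{kj}}_{a'}(a)$, the latter identification obtained via \eqref{affine2}. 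Both sides then equal $\eta^{z_{i,jk}}_{a\circ b}$ by \eqref{affine1}, using the reversibility identity $a\circ b = a'\circ b'$ already derived. \cref{le:lndsol} together with \cref{admi3} then yield that $R^{z_{ij}}$ is a reversible solution. The main obstacle throughout is the bookkeeping of composite parameter indices ($z_{i,jk}$ versus $z_{ij,k}$, and so on), since \eqref{affine1} decomposes only the \emph{second} parameter as a product, so \eqref{affine2} must be invoked repeatedly to reshuffle arguments whenever the first parameter appears as a composite.
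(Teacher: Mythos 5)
Your proposal is correct and follows essentially the same route as the paper: the $p$-brace axioms are verified directly from \eqref{affine1} and \eqref{affine2} (with \eqref{affine2} used to move composite parameters out of the first slot, exactly the point you flag), the induced $p$-shelf is shown to be trivial, and admissibility of $\sigma^{z_{ij}}_a$ plus \cref{le:lndsol} and \cref{admi3} give the reversible solution. The only cosmetic difference is that you establish triviality of $\triangleright_{z_{ij}}$ via the reversibility relation \eqref{eqq}, whereas the paper computes $a\triangleright_{z_{ij}}b=\sigma^{z_{ji}}_a\sigma^{z_{j\bar i}}_{a^{-1}}(b)=\sigma^{z_{j0}}_0(b)=b$ directly; both rest on the same identity $(\eta^{z_{ij}}_a)^{-1}=\eta^{z_{i\bar j}}_{a^{-1}}$.
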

\begin{proof}
Let $a,b,c \in X$ and $z_{i,j,k} \in Y$. Then
\begin{align*}
    \left(a +_{ij} b\right) +_{k,ij} c&=b \circ \eta_b^{z_{ji}}(a) +_{k,ij}c &\mbox{by \eqref{affine2}}\\    
        &=b \circ \eta_b^{z_{ji}}(a) \circ \eta_{b \circ \eta_b^{z_{ji}}(a)}^{z_{k, ji}}(c) &\mbox{by \eqref{affine1}}\\
    &=b \circ \eta_b^{z_{ji}}(a) \circ \eta^{z_{kj}}_{\eta_b^{z_{ji}}(a)}\eta^{z_{ki}}_{b}(c)&\mbox{by \eqref{affine1}}\\
    &=b \circ \eta^{z_{ki}}_{b}(c) \circ \eta^{ z_{j,ki}}_{ b \circ \eta^{z_{ki}}_{b}(c)} (a)&\mbox{by \eqref{affine1}}
\end{align*}
and
\begin{align*}
    a +_{ki,j} \left(b+_{ki}c\right)&=a \circ \eta_a^{z_{ki,j}}\left(b \circ \eta_b^{z_{ki}}(c)\right)\\
    &=b \circ \eta^{z_{ki}}_b(c) \circ \eta^{z_{j,ki}}_{b \circ \eta^{z_{ki}}_b(c)}(a) &\mbox{by \eqref{affine2}}
\end{align*}
thus the identity $(1)$ on \cref{p-brace} is satisfied. Moreover, the equality $(2)$ immediately follows from the definition of $+_{i,j}$ and \cref{def_affine}.
Now, by \cref{rem_eta}, $\eta_0^{z_{i0}} = \id_X$, hence $(3)$ holds.
It follows that $a +_{ij} \eta^{z_{i\,\bar{j}}}_{a^{-1}}\left(a^{-1}\right) = 0$ and, by \eqref{affine2} and the bijectivity of the maps $\eta^{z_{i\,j}}_{a}$,  condition $(4)$ is satisfied.
Furthermore, equation \eqref{prop-p-brace} is satisfied since
 \begin{align*}
     a\circ\left(b+_{k,ij}c\right)&=a \circ b \circ \eta_b^{z_{k, ij}}(c)
 \end{align*}
and, by \eqref{affine1},
\begin{align*}
   a\circ b +_{ki} a\circ \left(a^{-1} +_{k,j} c\right)&= a \circ b \circ \eta^{z_{ki}}_{a \circ b}\,\eta_{a^{-1}}^{z_{kj}}(c)=a \circ b \circ \eta_b^{z_{k, ij}}(c).
\end{align*}
Therefore, $\left(X, +_{ij}, \circ \right)$ is a $p$-brace.\\
Besides, again by \eqref{affine1}, it follows also that $(\eta_{a}^{z_{ij}})^{-1} = \eta_{a^{-1}}^{z_{i\bar{j}}}$, for every $a\in X$. 
Thus, 
\begin{center}
 $\sigma^{z_{ij}}_a(b)= (\eta_{a}^{z_{ij}})^{-1}(b) = a \circ \left(a^{-1}+_{i\bar{j}} b\right)$ \quad \text{and} \quad $\tau^{z_{ij}}_b(a)=\sigma_a^{z_{ij}}\left(b\right)^{-1}\circ a\circ b$.    
\end{center}
Hence, $\sigma_{0}^{z_{i0}} = \id_X$ and $(\sigma_{a}^{z_{ij}})^{-1} = \sigma_{a^{-1}}^{z_{i\bar{j}}}$, for every $a\in X$. Thus, if $a, b \in X$, we get
\begin{align*}
a\triangleright_{z_{ij}} b
&=\sigma^{z_{ji}}_{a}\tau^{z_{ij}}_{\left(\sigma^{z_{ij}}_b\right)^{-1}\left(a\right)}\left(b\right) = a\circ\left(a^{-1} +_{j\bar{i}} a^{-1}\circ\left(b+_{ij}a\right)\right)
\underset{(2)}{=} a\circ\left(a^{-1} +_{j\bar{i}} a^{-1}\circ\left(a+_{ji}b\right)\right)\\
&= \sigma_a^{z_{ji}}\sigma_{a^{-1}}^{z_{j\bar{i}}}(b)
= \sigma^{z_{j, \bar{i}i}}_{0}(b) = \sigma^{z_{j0}}_{0}(b)= b,
\end{align*}
namely, $\left(X,\,\triangleright_{z_{ij}}\right)$ is the trivial $p$-rack and so condition \cref{admi2} (2) holds.
Besides, since
\begin{align*}
    \sigma^{z_{ij}}_a(b) \circ \tau^{z_{ij}}_b(a)=a\circ b
\end{align*}
and, by \eqref{affine1} $\sigma^{z_{i, jk}}_{a\circ b} = \sigma^{z_{i, kj}}_{a\circ b}$, for all $a, b \in X$ and $z_{i,j, k} \in Y$, we obtain condition \cref{admi2} (1). Consequently, maps $\sigma_a^{z_{ij}}$ determine an admissible twist, and by \cref{le:lndsol} the claim follows.
\end{proof}


\begin{pro}\label{pro_p_brace}
Let $\left(X, +_{ij}, \circ \right)$ be a $p$-brace and $Y \leq X$. Then, the map $\eta^{z_{ij}}_a:X\to X$, given by $\eta^{z_{ij}}_{a}\left(b\right):= a^{-1}\circ\left(a +_{ij} b\right)$, 
for all $a,b\in X$ and $z_{i,j} \in Y$, defines a reversible $p$-affine structure $\eta^{z_{ij}}$ on the group $\left(X,\,\circ \right)$. 
\begin{proof}
Observe that \eqref{affine2} and \eqref{affine1} trivially follow from the definition of $p$-brace. Moreover, by \cref{p-brace} (4), $\eta_{0}^{z_{i0}}=\id_X$. It also follows that $\eta^{z_{ij}}_{a}\eta^{z_{i\bar{j}}}_{a^{-1}} =  \eta^{z_{i\bar{j}}}_{a^{-1}}\eta^{z_{ij}}_{a} = \eta^{z_{i0}}_{0} = \id_X$, hence $\eta^{z_{ij}}_{a}\in Sym\left(X\right)$,  for all $a\in X$ and $z_{i,j} \in Y$, which completes the proof.
\end{proof}
\end{pro}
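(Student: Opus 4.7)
The verification splits naturally into three parts, corresponding to the three requirements of \cref{def_affine}: every $\eta^{z_{ij}}_a$ must lie in $\mathrm{Sym}(X)$; the symmetry \eqref{affine2} must hold; and the multiplicativity \eqref{affine1} must hold.

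Bijectivity is the quickest. By \cref{rem_left_canc}, for every $z_{i,j} \in Y$ the structure $(X, +_{ij})$ is a quasigroup, so the left translation $b \mapsto a +_{ij} b$ is a bijection of $X$; post-composing with the group bijection $x \mapsto a^{-1} \circ x$ yields $\eta^{z_{ij}}_a \in \mathrm{Sym}(X)$. For \eqref{affine2}, unwind both sides to $a \circ \eta^{z_{ij}}_a(b) = a +_{ij} b$ and $b \circ \eta^{z_{ji}}_b(a) = b +_{ji} a$, and invoke condition~(2) of \cref{p-brace} specialized at $k = 0$ (so that $z_0 = 0$ and $z_{j0} = z_{0j} = z_j$): this gives exactly $a +_{ij} b = b +_{ji} a$.

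The core of the proof is \eqref{affine1}. The symmetry $\eta^{z_{i,jk}}_{a \circ b} = \eta^{z_{i,kj}}_{a \circ b}$ is immediate from the first equality $a +_{i, jk} b = a +_{i, kj} b$ in condition~(2) of \cref{p-brace}. For the main identity $\eta^{z_{i,jk}}_{a \circ b} = \eta^{z_{ij}}_b\, \eta^{z_{ik}}_a$, expanding both sides via the definition of $\eta$ reduces the claim to
\[
(a \circ b) +_{i, jk} c \;=\; a \circ \bigl( b +_{ij} a^{-1} \circ (a +_{ik} c) \bigr)
\]
for all $a, b, c \in X$ and $z_{i,j,k} \in Y$. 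To establish this, start from \eqref{prop-p-brace} with the relabeling $(k, i, j) \mapsto (i, j, k)$, namely $a \circ (b +_{i, jk} c) = a \circ b +_{ij} a \circ (a^{-1} +_{ik} c)$, and then perform three successive substitutions: replace $a$ by $a^{-1}$, left-multiply both sides by $a$, and finally replace $b$ by $a \circ b$. These manipulations land precisely on the displayed identity, and applying $(a \circ b)^{-1} \circ$ to both sides together with the definition of $\eta$ recovers \eqref{affine1}.

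The only non-routine step is spotting this triple substitution, which converts the ``outer $a$'' form of distributivity naturally produced by \eqref{prop-p-brace} into the ``inner $a$'' form required for composing $\eta$'s; once it is in place, every remaining manipulation is mechanical.
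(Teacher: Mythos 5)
Your proof is correct, and the substance — verifying \eqref{affine2}, \eqref{affine1}, and membership in $\mathrm{Sym}(X)$ directly from the $p$-brace axioms — matches the paper's (very terse) argument. The one genuine divergence is the bijectivity step: the paper first establishes \eqref{affine1} and then reads off the explicit two-sided inverse $\eta^{z_{i\bar{j}}}_{a^{-1}}$ from $\eta^{z_{ij}}_{a}\eta^{z_{i\bar{j}}}_{a^{-1}} = \eta^{z_{i0}}_{0} = \id_X$, whereas you invoke the quasigroup property of $(X,+_{ij})$ from \cref{rem_left_canc} and compose the left translation with $x\mapsto a^{-1}\circ x$. Both routes are legitimate and rely only on material proved before the proposition; the paper's version has the small bonus of identifying the inverse of $\eta^{z_{ij}}_a$ explicitly as $\eta^{z_{i\bar{j}}}_{a^{-1}}$ (a fact used later), while yours is arguably more economical. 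Your explicit reduction of $\eta^{z_{i,jk}}_{a\circ b}=\eta^{z_{ij}}_b\eta^{z_{ik}}_a$ to \eqref{prop-p-brace} via the substitutions $a\mapsto a^{-1}$, $b\mapsto a\circ b$ is exactly the computation the paper declares ``trivial,'' and the specialization $k=0$ in axiom (2) for \eqref{affine2} is likewise correct since $Y\leq X$ guarantees $z_0=0\in Y$.
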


The following proposition shows that structures derived from \cref{p1}(2) are $p$-braces.
\begin{pro}\label{pro_pbrace}
    Let $(X, +, \circ)$ be a brace, $Y \subseteq {\cal D}(X)$, and suppose, for all $z_{i,j}\in Y,$  $z_i\circ z_j= z_j \circ z_i$. Let $\sigma_{a}^{z_{ij}}(b) := z_i^{-1} - a \circ z_i^{-1} \circ z_j + a \circ b\circ z_j$ 
    for all $a, b \in X$ and $z_{i,j} \in Y$, and set
    \begin{align*}
        a+_{ij}b:= a \circ\left( \sigma_a^{{z_{ij}}}\right)^{-1}(b).
    \end{align*}
    Then, the structure $\left(X,\,+_{ij}, \circ\right)$ is a $p$-brace.
    \begin{proof}
        Observing that $(\sigma^{z_{ij}}_a)^{-1}=\sigma^{z_{i\bar j}}_{a^{-1}}$, we get $a \circ (\sigma^{z_{ij}}_a)^{-1}(b) = b \circ (\sigma^{z_{ji}}_b)^{-1}(a)$, for all $a, b \in X$ and $z_{i,j} \in Y$. Moreover, $\sigma_{a}^{z_{ij}}\sigma_{b}^{z_{ik}} = \sigma_{a\circ b}^{z_{i,kj}}$, for all $a, b \in X$ and $z_{i, j,k }\in Y$.
Then $\left(\sigma^{z_{ij}}_a\right)^{-1}$ is a reversible $p$-affine structure on $(X, \circ)$ and the claim follows by \cref{thm_affine}.
\end{proof}
\end{pro}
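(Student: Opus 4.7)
The plan is to apply \cref{thm_affine}: it suffices to verify that $\eta^{z_{ij}}_{a}:=\left(\sigma^{z_{ij}}_{a}\right)^{-1}$ is a reversible $p$-affine structure on $(X,\circ)$, and then the resulting operation $a+_{ij}b = a\circ\eta^{z_{ij}}_{a}(b)$ automatically yields a $p$-brace.

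First, I would compute the inverse of $\sigma^{z_{ij}}_{a}$ by hand. Writing $\sigma^{z_{ij}}_{a}(b) = z_{i}^{-1} - a\circ z_{i}^{-1}\circ z_{j} + a\circ b\circ z_{j}$ and using that $z_{j}\in\mathcal{D}(X)$ (so $-\circ z_{j}$ is almost an additive homomorphism modulo the $z_{j}$-shift) together with $z_{i}\circ z_{j}=z_{j}\circ z_{i}$, a direct check gives $\left(\sigma^{z_{ij}}_{a}\right)^{-1}=\sigma^{z_{i\bar j}}_{a^{-1}}$. From this the reversibility \eqref{affine2} translates into the symmetry identity
\begin{align*}
a\circ\sigma^{z_{i\bar j}}_{a^{-1}}(b) = b\circ\sigma^{z_{j\bar i}}_{b^{-1}}(a),
\end{align*}
which after substituting the formula collapses (using the commutativity of the $z_{i}$'s under $\circ$ and the right-distributor property) to the identity $z_{j}^{-1}+a\circ b\circ z_{i}^{-1}\circ z_{j} = z_{i}^{-1}+b\circ a\circ z_{j}^{-1}\circ z_{i}$ rewritten symmetrically; since $(X,+)$ is abelian (we are in a brace, not merely a skew brace) and the $z_{i}$'s commute under $\circ$, both sides agree.

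Next I would verify the composition law \eqref{affine1}, which under $\eta=\sigma^{-1}$ is equivalent to
\begin{align*}
\sigma^{z_{i,kj}}_{a\circ b} = \sigma^{z_{ik}}_{a}\,\sigma^{z_{ij}}_{b}
\end{align*}
for all $a,b\in X$ and $z_{i,j,k}\in Y$. Expanding the right-hand side using the explicit formula and repeatedly invoking $z\in\mathcal{D}(X)\Rightarrow (x+y)\circ z = x\circ z - z + y\circ z$ (applied with $z=z_{j}$ and $z=z_{k}$) together with $z_{j}\circ z_{k}=z_{k}\circ z_{j}$ gives a term of the form $z_{i}^{-1} - a\circ b\circ z_{i}^{-1}\circ z_{j}\circ z_{k} + a\circ b\circ c\circ z_{j}\circ z_{k}$, which is exactly $\sigma^{z_{i,kj}}_{a\circ b}(c)$.

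The main obstacle will be the composition identity in the previous step: unpacking $\sigma^{z_{ik}}_{a}\sigma^{z_{ij}}_{b}(c)$ involves three nested applications of the $\mathcal{D}(X)$-distributivity and one needs to keep careful track of the $z_{i}^{\pm 1}$ shifts to see that the spurious additive terms cancel. Once both equations \eqref{affine1} and \eqref{affine2} are established, the bijectivity of $\sigma^{z_{ij}}_{a}$ (already visible from $\left(\sigma^{z_{ij}}_{a}\right)^{-1}=\sigma^{z_{i\bar j}}_{a^{-1}}$) shows that $\eta^{z_{ij}}_{a}\in\Sym(X)$, so $\eta$ is indeed a reversible $p$-affine structure, and \cref{thm_affine} yields the $p$-brace $(X,+_{ij},\circ)$ with the prescribed addition.
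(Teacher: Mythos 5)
Your proposal takes exactly the paper's route: show that $\eta^{z_{ij}}_a:=(\sigma^{z_{ij}}_a)^{-1}=\sigma^{z_{i\bar j}}_{a^{-1}}$ satisfies the reversibility condition \eqref{affine2} and the composition law \eqref{affine1}, hence is a reversible $p$-affine structure, and then invoke \cref{thm_affine}; your expansion of $\sigma^{z_{ik}}_a\sigma^{z_{ij}}_b(c)$ to $z_i^{-1}-a\circ b\circ z_i^{-1}\circ z_j\circ z_k+a\circ b\circ c\circ z_j\circ z_k$ is correct. The only blemish is the intermediate identity you display in the reversibility check, which as written would need $a\circ b=b\circ a$ (not assumed): the computation actually collapses to $a\circ z_i^{-1}-z_i^{-1}\circ z_j^{-1}+b\circ z_j^{-1}=b\circ z_j^{-1}-z_j^{-1}\circ z_i^{-1}+a\circ z_i^{-1}$, which holds precisely by commutativity of $+$ and of the parameters under $\circ$, so your argument is sound once that formula is corrected.
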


\smallskip

\begin{exa}
Following \cite[Example 6]{MaRySt24}, let $A_8$ be the brace with additive group $(\mathbb{Z}/8\mathbb{Z},+)$ and multiplication given by $a\circ b=a+(-1)^ab,$ for all $a,b\in \mathbb{Z}/8\mathbb{Z}.$ Then $\mathcal{D}\left({A_8}\right)=\{0,2,4,6\}$. If one consider $Y=\{0, 2\}$, it is easy to check that the map $\eta^{z_{ij}}: \mathbb{Z}/8\mathbb{Z} \to \Sym\left(\mathbb{Z}/8\mathbb{Z}\right)$ given by
\begin{align*}
    \eta_a^{0,0}(b) = \eta_a^{2,2}(b) = \sigma_a(b)=(-1)^ab,&\qquad\qquad
    \eta_a^{2,0}(b)=\begin{cases}
       \ b &\text{if $a$ is even}\\
       \ 7b+4 &\text{if $a$ is odd}
    \end{cases},
    \end{align*}
    and
\begin{align*}
    \eta_a^{0,2}(b)&=\begin{cases}
       \ b &\text{if $a,b$ are even}\\
       \ b+4 &\text{if $a$ is even, $b$ is odd}\\
       \ 7b+4 &\text{if $a,b$ are odd}\\
       \ 7b &\text{if $a$ is odd, $b$ is even}
    \end{cases},
\end{align*}
for all $a, b \in \mathbb{Z}/8\mathbb{Z}$, is a $p$-affine structure. Consequently, the structure $\left(\mathbb{Z}/8\mathbb{Z}, +_{ij}, \circ\right)$ is a $p$-brace.
\end{exa}


\smallskip

\subsection*{B. General case} In \cite{St23}, the notion of \emph{semi-affine structure} on a group is introduced to study skew braces and more general structures. A similar definition can also be presented for the parametric case.

\begin{defn} 
    Let  $\left(X,\,\circ\right)$  be a group, $Y\leq X$, and $\eta^{z_{ij}}:X\to Sym(X)$ a
map from $\left(X,\,\circ\right)$ to the symmetric group on the set $X$ such that, for all $a,b\in X$ and $z_{i,j,k \in Y}$, \eqref{affine1} holds, i.e., 
\begin{align}
	&\eta^{z_{i, jk}}_{a\circ b} = \eta^{z_{ij}}_b\eta^{z_{ik}}_a
	= \eta^{z_{i, kj}}_{a\circ b}, \nonumber\\
	&\eta^{z_{0j}}_a\left(0\right) = 0, \label{eta_0}\\
	&\eta_{a}^{z_{ij,k}} = \eta_{a}^{z_{ji,k}}. \label{eta_comm}
\end{align}
Then, $\eta^{z_{ij}}$ is said to be a \emph{$p$-affine structure} on  $\left(X,\circ\right)$ if, for all $a,b\in X$ and $z_{i,j} \in Y$,
\begin{align}\label{affine2-gc}
	\eta^{z_{kj,i}}_{a}\left(b\circ \eta_{b}^{z_{kj}}\left(c\right)\right)
	= \eta_{a}^{z_{ji}}\left(b\right)\circ
	\eta_{\eta^{z_{ji}}_{a}\left(b\right)}^{z_{ki}}\eta_{a}^{z_{kj}}\left(c\right).
\end{align}
\end{defn}

\begin{rem}
     Any reversible $p$-affine structure $\eta^{z_{ij}}$ on $(X, \circ)$ is a $p$-affine structure. Indeed, \eqref{eta_0} follows from \cref{rem_eta} and \eqref{affine2} with $a=0$ and $i=0$. Moreover, if $a,b,c \in X$ and $z_{i, j, k} \in Y$, by \eqref{affine2} and \eqref{affine1}, equality in \eqref{eta_comm} follows from
    \begin{center}
    	$b\circ\eta^{z_{jk,i}}_{b}\left(a\right) 
    	= a\circ\eta^{z_{i,jk}}_{a}\left(b\right) 
    	= a\circ\eta^{z_{i,kj}}_{a}\left(b\right)
    	= b\circ\eta^{z_{kj,i}}_{b}\left(a\right)$.
    \end{center}
    In addition, 
    \begin{align*}
    	a\circ \eta_{a}^{z_{ji}}\left(b\right)\circ\eta_{\eta^{z_{ji}}_{a}\left(b\right)}^{z_{ki}}\eta_{a}^{z_{kj}}\left(c\right)
    	&= a\circ \eta_{a}^{z_{ji}}\left(b\right)\circ\eta^{z_{k,ij}}_{a\circ\eta^{z_{ji}}_{a}\left(b\right)}\left(c\right)\\
    	&= b\circ \eta_{b}^{z_{ij}}\left(a\right)\circ
    	\eta^{z_{k,ij}}_{b\circ\eta^{z_{ij}}_{b}\left(a\right)}\left(c\right)
    	&\mbox{by \eqref{affine2}}\\
    	&= b\circ \eta_{b}^{z_{ij}}\left(a\right)\circ
    	\eta^{z_{ki}}_{\eta^{z_{ij}}_{b}\left(a\right)}\eta_{b}^{z_{kj}}\left(c\right)\\
    	&= b\circ \eta_{b}^{z_{kj}}\left(c\right)\circ\eta^{z_{ik}}_{\eta^{z_{kj}}_b\left(c\right)}\eta_{b}^{z_{ij}}\left(a\right)&\mbox{by \eqref{affine2}}\\
    &=b\circ\eta_{b}^{z_{kj}}\left(c\right)\circ\eta^{z_{i,kj}}_{b\circ\eta_{b}^{z_{kj}}\left(c\right)}\left(a\right)\\
    	&= a\circ\eta^{z_{kj,i}}_{a}\left(b\circ \eta_{b}^{z_{kj}}\left(c\right)\right)&\mbox{by \eqref{affine2}}
    \end{align*}
    hence \eqref{affine2-gc} is satisfied.
\end{rem}

\begin{defn}\label{skew p-brace}
    Let $\left(X,\,\circ\right)$ be a group, $Y\leq X$, and $+_{i,j}$ a binary operation on $X$, for all $z_{i, j} \in Y$.
    A triple $\left(X, +_{ij}, \circ\right)$ is a \emph{skew $p$-brace} if, for all $a,b,c \in X$ and $z_{i, j, k} \in Y$
    \begin{enumerate}
	\item[$(1)$] $\left(a +_{ij} b\right) +_{k,ij} c = a +_{ki,j} \left(b +_{ki}c\right)$,
	\item[$(2)$] $a +_{i,jk} b= a +_{i,kj} b$,
    \item[$(3)$] $a +_{0i} 0 =a = 0 +_{i0} a$,
    \item[$(4)$] $\exists \vert x, y \in X$ such that $a +_{ij}x=0$ and $y +_{ij}a=0$,
    \end{enumerate}
    and \eqref{prop-p-brace} holds, i.e., $a\circ\left(b+_{k,ij}c\right) = a\circ b +_{ki} a\circ \left(a^{-1} +_{kj} c\right)$.  
\end{defn}

\begin{pro}
Let $\left(X, +_{ij}, \circ\right)$ be a skew $p$-brace and $Y \leq X$. Then, for all $z_{i,j} \in Y$, the additive structure $\left(X, +_{ij} \right)$ is a quasigroup. In particular,  $\left(X, +_{i0} \right)$ is a quasigroup with a left identity and $\left(X, +_{0i} \right)$ is a quasigroup with a right identity, for all $z_{i}\in Y$.
\begin{proof}
Initially, let us show that $\left(X, +_{ij}, \circ\right)$ is left and right cancellative. The left cancellativity can be proven as in \cref{rem_left_canc}. For the right one, let $a, b, x\in X$ and assume $a +_{ij} x = b +_{ij} x$. Then, considering $x'\in X$ such that $x+_{\bar{i}\, i} x' = 0$, by $(1)$ and $(3)$, we obtain that 
\begin{align*}
a&= a +_{0,j} 0 =
a+_{\bar{i}i, j} (x+_{\bar{i}\, i} x')
= (a+_{ij} x) +_{\bar{i}, ij} x' =  
(b+_{ij} x) +_{\bar{i}, ij} x'\\
&= b +_{\bar{i}\, i, j}(x+_{\bar{i},i}x')
= b +_{0,j} 0 = b.    
\end{align*}
The remaining of the proof can be proven as in \cref{rem_left_canc}.
\end{proof}
\end{pro}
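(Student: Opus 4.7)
The plan is to verify the four ingredients of a quasigroup, namely the two cancellativity laws and the existence of one-sided divisions, and then read off the identity claims for $(X,+_{i0})$ and $(X,+_{0i})$ directly from axiom (3) of \cref{skew p-brace}. The tools at my disposal are axioms (1)--(4) of \cref{skew p-brace} together with the distributivity relation \eqref{prop-p-brace}.

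First I would establish left and right cancellativity. For left cancellation, given $x +_{ij} a = x +_{ij} b$, I would pick $x'$ from axiom (4) with $x' +_{j0} x = 0$ and compute
\begin{align*}
a &= 0 +_{ij,0} a = (x' +_{j0} x) +_{ij,0} a = x' +_{ij,0}(x +_{ij} a)\\
&= x' +_{ij,0}(x +_{ij} b) = b,
\end{align*}
invoking the associativity-type axiom (1) and the identity axiom (3). Right cancellativity is entirely symmetric: one uses a right inverse $x'$ with $x +_{\bar{i},i} x' = 0$ and unwinds via (1) and (3) in the mirror order.

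Second, I would produce explicit solutions of $a +_{ij} x = b$ and $y +_{ij} a = b$. The strategy is to imitate the construction of \cref{rem_left_canc}, where for $p$-braces one uses $x := b +_{\bar{i}, ji} a^{-1} \circ (a +_{\bar{i}\,\bar{i}} a^{-1})$ and $y := a \circ (a^{-1} +_{i,\bar{j}} a^{-1} \circ b)$. In the skew setting the non-commutativity of the additive operation means I will likely need two genuinely different formulas, but both should arise by combining the $\circ$-inverse of $a$ with the distributivity \eqref{prop-p-brace}, which is precisely the axiom coupling $+_{ij}$ to $\circ$. Once candidates $x$ and $y$ are written down, checking $a +_{ij} x = b$ and $y +_{ij} a = b$ reduces to a chain of applications of \eqref{prop-p-brace} together with axioms (1) and (3); uniqueness then follows from the cancellativity already established.

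The main obstacle I anticipate is the parameter bookkeeping: axiom (1) mixes $+_{ij}$ with $+_{k,ij}$ and $+_{ki,j}$, axiom (2) swaps inner pairs, and \eqref{prop-p-brace} further shuffles indices through the $\circ$-operation, so I expect to need several attempts before the candidate formulas for $x$ and $y$ align the indices correctly. Once that is settled, the identity statements are immediate from (3): the equality $0 +_{i0} a = a$ makes $0$ a left identity for $(X, +_{i0})$ and $a +_{0i} 0 = a$ makes $0$ a right identity for $(X, +_{0i})$, completing the proof.
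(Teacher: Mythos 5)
Your proposal follows essentially the same route as the paper: left cancellativity exactly as in \cref{rem_left_canc}, right cancellativity by the mirrored computation starting from $a = a +_{0,j}\, 0$ with a right opposite $x'$ satisfying $x +_{\bar{i}\,i} x' = 0$ and axiom $(1)$ applied in the reverse direction, explicit division elements built as in the $p$-brace case, and the one-sided identities read off from axiom $(3)$. The paper likewise defers the construction of the division elements to \cref{rem_left_canc}, so your hedging about the exact index bookkeeping there matches the level of detail the paper itself supplies.
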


Evidently, any $p$-brace is a skew $p$-brace. 
In the following, we will show that, as semi-affine structures on groups give rise to skew braces (see \cite{St23}), also $p$-affine structures provide skew $p$-braces and solutions.
\begin{pro}\label{prop_skewpbrace}
    Let $(X,\,\circ)$, be a group, $Y \leq X$, and $\eta^{z_{ij}}:X\to Sym(X)$ a $p$-affine structure. Define, for all $a,b\in X$ and $z_{i,j} \in Y$, the following binary operation on $X$:
\begin{align*}
a+_{ij}b:= a\circ\eta^{z_{ij}}_{a}\left(b\right). 
\end{align*}
Then, the triple $\left(X, +_{ij}, \circ \right)$ is a skew $p$-brace. 
\end{pro}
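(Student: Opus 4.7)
My plan is to mirror the proof of \cref{thm_affine} (the reversible case), verifying the four axioms of \cref{skew p-brace} together with the distributivity \eqref{prop-p-brace} one by one, but replacing each invocation of the symmetry \eqref{affine2} with an appropriate use of the generalised axiom \eqref{affine2-gc}. Throughout I would work directly from the definition $a +_{ij} b := a \circ \eta^{z_{ij}}_a(b)$ and from the structural identities \eqref{affine1}, \eqref{eta_0}, \eqref{eta_comm} and \eqref{affine2-gc}.

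For axiom~(1), I would expand $(a +_{ij} b) +_{k,ij} c$ using \eqref{affine1} on the element $a\circ\eta^{z_{ij}}_a(b)$, obtaining $a \circ \eta^{z_{ij}}_a(b) \circ \eta^{z_{kj}}_{\eta^{z_{ij}}_a(b)}\eta^{z_{ki}}_a(c)$, and then match it with $a +_{ki,j}(b +_{ki} c) = a \circ \eta^{z_{ki,j}}_a(b\circ\eta^{z_{ki}}_b(c))$ by invoking \eqref{affine2-gc} after the relabelling $i\leftrightarrow j$. Axiom~(2) reduces to $\eta^{z_{i,jk}}_a = \eta^{z_{i,kj}}_a$, which is immediate from the double equality in \eqref{affine1} applied with $b = 0$. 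For axiom~(3), $a +_{0i} 0 = a$ follows directly from \eqref{eta_0}, while $0 +_{i0} a = a$ follows once one knows $\eta^{z_{i0}}_0 = \id_X$; the latter is obtained by specialising \eqref{affine1} to $a = b = 0$ and $j = k = 0$, which yields $\eta^{z_{i0}}_0 = (\eta^{z_{i0}}_0)^2$, forcing $\eta^{z_{i0}}_0 = \id_X$ since it lies in $\Sym(X)$. The distributivity \eqref{prop-p-brace} reduces to the identity $\eta^{z_{k,ij}}_b = \eta^{z_{ki}}_{a\circ b}\,\eta^{z_{kj}}_{a^{-1}}$, which is \eqref{affine1} applied with the pair $(a^{-1}, a\circ b)$ whose product is $b$.

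The main obstacle will be axiom~(4). The existence and uniqueness of $x$ with $a +_{ij} x = 0$ is immediate: solving $\eta^{z_{ij}}_a(x) = a^{-1}$ yields $x = (\eta^{z_{ij}}_a)^{-1}(a^{-1}) = \eta^{z_{i\bar j}}_{a^{-1}}(a^{-1})$, where the inversion formula $(\eta^{z_{ij}}_a)^{-1} = \eta^{z_{i\bar j}}_{a^{-1}}$ is itself \eqref{affine1} specialised to $b = a^{-1}$, $k = \bar j$. The harder direction is to show that the map $\rho^{z_{ij}}_a \colon X \to X$, $y \mapsto y\circ\eta^{z_{ij}}_y(a)$, is a bijection, which is what the existence and uniqueness of $y$ with $y +_{ij} a = 0$ amount to. In the reversible case this was trivial via the symmetry \eqref{affine2}, but without that symmetry one must exploit \eqref{affine2-gc} more substantively; the plan is to rewrite \eqref{affine2-gc} as the functional identity $\eta^{z_{kj,i}}_a \circ \rho^{z_{kj}}_b = \rho^{z_{ki}}_{\eta^{z_{ji}}_a(b)} \circ \eta^{z_{kj}}_a$ and to specialise the parameter indices (for instance sending one of $i,j,k$ to $0$ and invoking \eqref{eta_0} together with the factorisation $\eta^{z_{ij}}_a = \eta^{z_{i0}}_a\,\eta^{z_{ij}}_0$ obtained from \eqref{affine1}) so as to express $\rho^{z_{ij}}_a$ as a composition of already invertible maps and thereby read off its inverse explicitly, paralleling the non-parametric treatment of semi-affine structures in \cite{St23}.
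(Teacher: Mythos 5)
Your treatment of axioms $(1)$--$(3)$ and of \eqref{prop-p-brace} is correct and coincides with the paper's proof, which verifies $(1)$ by the same \eqref{affine2-gc}-plus-\eqref{affine1} computation and defers $(2)$, the identity $0+_{i0}a=a$, and \eqref{prop-p-brace} to the proof of \cref{thm_affine}; the right opposite in $(4)$ is also handled exactly as in the paper. The genuine gap is the left opposite. The paper does not attempt to prove at this stage that the right translation $\rho^{z_{ij}}_a\colon y\mapsto y+_{ij}a$ is a bijection; it simply exhibits the element $y:=\eta^{z_{\bar i j}}_{a^{-1}}\left(a^{-1}\right)$ and checks $y+_{ij}a=0$. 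Your alternative route cannot be closed as described. First, the functional identity you extract from \eqref{affine2-gc} is miswritten: reading \eqref{affine2-gc} as a function of $b$ gives $\eta^{z_{kj,i}}_a\circ\rho^{z_{kj}}_c=\rho^{z_{ki}}_{\eta^{z_{kj}}_a(c)}\circ\eta^{z_{ji}}_a$, not the formula you state. Second, and more seriously, this identity only conjugates right translations having the \emph{same first parameter index} into one another, and the only right translation that is bijective a priori is $\rho^{z_{0j}}_0=\id_X$ (from \eqref{eta_0}); so the bootstrap has no base case from which to reach $\rho^{z_{ki}}_d$ with $z_k\neq 0$. Your reduction $\eta^{z_{ij}}_a=\eta^{z_{i0}}_a\,\eta^{z_{ij}}_0$ likewise only normalises the second index and leaves you needing bijectivity of $\rho^{z_{i0}}_{a'}$, which is not a composition of your ``already invertible maps''.

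What is actually needed is a pointwise evaluation of \eqref{affine2-gc}: substituting $(i,j,k,a,b,c)\mapsto\bigl(j,\,\bar i,\,i,\,a^{-1},\,a^{-1},\,\eta^{z_{ii}}_a(a)\bigr)$, the right-hand side becomes $y\circ\eta^{z_{ij}}_y(a)=y+_{ij}a$ with $y=\eta^{z_{\bar i j}}_{a^{-1}}\left(a^{-1}\right)$, because $\eta^{z_{i\bar i}}_{a^{-1}}\eta^{z_{ii}}_{a}=\eta^{z_{i0}}_0=\id_X$ by \eqref{affine1}, while the left-hand side becomes $\eta^{z_{0j}}_{a^{-1}}\left(a^{-1}\circ a\right)=0$ by \eqref{eta_0}; hence $y+_{ij}a=0$. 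Uniqueness of both opposites then follows from left and right cancellativity of $+_{ij}$, which (as in the proposition following \cref{skew p-brace}) is a consequence of axioms $(1)$, $(3)$ and the existence of opposites just established. With this replacement for your axiom-$(4)$ argument the proof is complete.
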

\begin{proof}
Let $a,b,c \in X$ and $z_{i,j,k} \in Y$. Then, 
\begin{align*}
    a +_{ki,j} \left(b+_{ki}c\right)
    &= a \circ \eta_a^{z_{ki,j}}\left(b \circ \eta_b^{z_{ki}}(c)\right)\\
    &=a\circ\eta_{a}^{z_{ij}}\left(b\right)\circ
    \eta_{\eta^{z_{ij}}_{a}\left(b\right)}^{z_{kj}}\eta_{a}^{z_{ki}}\left(c\right)&\mbox{by \eqref{affine2-gc}}\\
    &= a\circ\eta_{a}^{z_{ij}}\left(b\right)\circ\eta^{z_{k, ji}}_{a\circ\eta^{z_{ij}}_{a}\left(b\right)}\left(c\right)&\mbox{by \eqref{affine1}}\\    &=a\circ\eta_{a}^{z_{ij}}\left(b\right)\circ\eta^{z_{k, ij}}_{a\circ\eta^{z_{ij}}_{a}\left(b\right)}\left(c\right)&\mbox{by \eqref{affine1}}\\
    &= \left(a +_{ij} b\right) +_{k,ij} c,
\end{align*}
thus the equality $(1)$ of \cref{skew p-brace} is satisfied. Moreover, the identities $(2)$ and $a=0+_{i0} a$ in $(3)$ of \eqref{prop-p-brace} can be proved as in \cref{thm_affine}. In addition, by \eqref{eta_0}, we have $a +_{0i} 0 = a\circ\eta^{z_{0,i}}_{a}\left(0\right) = a$.
Finally, $a+_{ij} \eta_{a^{-1}}^{z_{i\bar j}}\left(a^{-1}\right)=0$ and $\eta^{z_{\bar i j}}_{a^{-1}}\left(a^{-1} \right) +_{{ij}} a=0$. Therefore, $\left(X, +_{ij}, \circ\right)$ is a skew $p$-brace.
  \end{proof}  

\begin{rem}
 Observe that in the case of a $p$-brace, we have $\eta^{z_{j\bar i}}_{a^{-1}}\left(a^{-1}\right) = \eta^{z_{\bar i j}}_{a^{-1}}\left(a^{-1}\right)$ and, actually, the left opposite of $a$ is consistent with that found in any skew $p$-brace.
\end{rem}

\begin{lemma}\label{lemma_prop_eta}
      Let $(X,\,\circ)$, be a group, $Y \leq X$, and $\eta^{z_{ij}}:X\to Sym(X)$ a $p$-affine structure. Define, for all $a,b\in X$ and $z_{i,j} \in Y$, the following binary operation on $X$ $a+_{ij}b:= a\circ\eta^{z_{ij}}_{a}\left(b\right)$. Then, the following hold for all $a, b, c \in X$ and $z_{i,j,k} \in Y$,
\begin{enumerate}
    \item[$(1)$] $\eta^{z_{kj,i}}_{a}\left(b +_{kj} c\right)
	= \eta_{a}^{z_{ji}}\left(b\right) +_{ki}\,
	\eta_{a}^{z_{kj}}\left(c\right)$, 
    \item[$(2)$] $\left(\eta^{z_{ij}}_a\right)^{-1}=\eta_{a^{-1}}^{z_{i\bar j}}$,
     \item[$(3)$] $\eta^{z_{\bar{i}\bar{i}}}_{a^{-1}}\left(a^{-1}\right) +_{i\bar{i}} a=0$,
    \item[$(4)$] $\eta^{z_{ji}}_{a^{-1}}\left(a^{-1}\right) +_{ki} a\circ b=a\circ\left(a^{-1} +_{kj,i} b\right)$.  
\end{enumerate}
\begin{proof}
Initially, the equality in $(1)$ follows directly by \eqref{affine2-gc}. Moreover, let $a, b \in X$ and $z_{i,j,k} \in Y$. From the proof of \cref{prop_skewpbrace},
   $
   \eta_0^{z_{i0}}=\id_X$ and $\eta^{z_{\bar{i}\, \bar{i}}}_{a^{-1}}\left(a^{-1}\right) +_{i\bar{i}} a
	=0$.
Furthermore, by $(1)$ and $(2)$, we have
\begin{align*}
	\eta^{z_{ji}}_{a^{-1}}\left(a^{-1}\right) +_{ki} a\circ b
	&= \eta^{z_{ji}}_{a^{-1}}\left(a^{-1}\right) +_{ki} \eta^{z_{kj}}_{a^{-1}}\eta^{z_{k\bar{j}}}_{a}\left(a\circ b\right)
	= \eta^{z_{kj,i}}_{a^{-1}}\left(a^{-1} +_{{kj}} \eta^{z_{k\bar{j}}}_{a}\left(a\circ b\right)\right)\\
	&= \eta^{z_{kj,i}}_{a^{-1}}\left(b\right)
	= a\circ\left(a^{-1} +_{kj,i} b\right).
    \qedhere
    \end{align*}
\end{proof}
\end{lemma}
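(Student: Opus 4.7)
The plan is to verify each of the four items separately, using only the axioms \eqref{affine1}, \eqref{eta_0}, \eqref{eta_comm}, and \eqref{affine2-gc} of a $p$-affine structure together with the defining equation $a +_{ij} b = a\circ\eta^{z_{ij}}_a(b)$.

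Item (1) should be essentially immediate. After expanding $b +_{kj} c = b\circ\eta^{z_{kj}}_b(c)$ inside $\eta^{z_{kj,i}}_a$ on the left-hand side, and expanding $\eta^{z_{ji}}_a(b) +_{ki} \eta^{z_{kj}}_a(c) = \eta^{z_{ji}}_a(b)\circ\eta^{z_{ki}}_{\eta^{z_{ji}}_a(b)}\eta^{z_{kj}}_a(c)$ on the right-hand side, the claim becomes literally \eqref{affine2-gc}. No further work is required.

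For item (2), the first step is to establish $\eta^{z_{i0}}_0 = \id_X$ by specializing \eqref{affine1} at $a = b = 0$ and $j = k = 0$: this gives $\eta^{z_{i0}}_0 = \eta^{z_{i0}}_0\,\eta^{z_{i0}}_0$, and since $\eta^{z_{i0}}_0$ lies in $\Sym(X)$, cancellation forces it to be the identity. Next, specializing \eqref{affine1} at $b = a^{-1}$ and $k = \bar j$ yields $\eta^{z_{ij}}_{a^{-1}}\,\eta^{z_{i\bar j}}_a = \eta^{z_{i0}}_0 = \id_X$, and the symmetric substitution with $a$ and $b$ exchanged produces the reverse composition. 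Replacing $j$ by $\bar j$ then gives $(\eta^{z_{ij}}_a)^{-1} = \eta^{z_{i\bar j}}_{a^{-1}}$.

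Item (3) is the right-inverse identity $\eta^{z_{\bar i j}}_{a^{-1}}(a^{-1}) +_{ij} a = 0$ of the skew $p$-brace established in \cref{prop_skewpbrace}, specialized to $j = \bar i$; it therefore requires only a citation. For item (4), the plan is to combine (1) and (2). Using (2), insert $\eta^{z_{kj}}_{a^{-1}}\,\eta^{z_{k\bar j}}_a = \id_X$ in front of $a\circ b$ to rewrite the second summand, and then apply (1) in reverse to collapse the sum into $\eta^{z_{kj,i}}_{a^{-1}}\bigl(a^{-1} +_{kj} \eta^{z_{k\bar j}}_a(a\circ b)\bigr)$; inside the outer $\eta$, the definition of $+_{kj}$ together with (2) simplifies the argument to $a^{-1}\circ(a\circ b) = b$. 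Since $a\circ(a^{-1} +_{kj,i} b) = \eta^{z_{kj,i}}_{a^{-1}}(b)$ by the definition of $+_{kj,i}$, the identification is complete. The main obstacle throughout is bookkeeping — tracking the parameters $z_i$, $z_j$, $z_k$, $z_{\bar i}$, $z_{\bar j}$, and their compositions, and ensuring each invocation of \eqref{affine1} is applied with exactly the right substitution — but no further conceptual ingredient is needed.
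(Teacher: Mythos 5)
Your proposal is correct and follows essentially the same route as the paper: item $(1)$ is read off as a restatement of \eqref{affine2-gc}, items $(2)$ and $(3)$ come from the identities $\eta^{z_{i0}}_0=\id_X$, $\eta^{z_{ij}}_{a^{-1}}\eta^{z_{i\bar j}}_a=\id_X$ and the right-inverse property already established for skew $p$-braces, and item $(4)$ is obtained by inserting $\eta^{z_{kj}}_{a^{-1}}\eta^{z_{k\bar j}}_a=\id_X$ and collapsing via $(1)$, exactly as in the paper. The only difference is that you spell out the specializations of \eqref{affine1} behind $(2)$ and $(3)$, which the paper handles by citing \cref{thm_affine} and \cref{prop_skewpbrace}.
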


\begin{thm}\label{thm_skew_p}
    Let $(X,\,\circ)$, be a group, $Y \leq X$, and $\eta^{z_{ij}}:X\to Sym(X)$ a $p$-affine structure. Define, for all $a,b\in X$ and $z_{i,j} \in Y$, the following binary operation on $X$ $a+_{ij}b:= a\circ\eta^{z_{ij}}_{a}\left(b\right)$. Then, the maps defined by 
\begin{center}
    $\sigma^{z_{ij}}_a(b):= (\eta_{a}^{z_{ij}})^{-1}(b)=a \circ \left(a^{-1}+_{i\bar{j}} b\right)$ \quad \text{and} \quad $\tau^{z_{ij}}_b\left(a\right):=
\sigma_a^{z_{ij}}\left(b\right)^{-1}\circ a\circ b$, 
\end{center}for all $a, b \in X$ and $z_{i,j}\in Y$, determine a solution $R^{z_{ij}}(b, a)=\left(\sigma^{z_{ij}}_a(b), \tau^{z_{ij}}_b\left(a\right)\right)$ on $X$.
\begin{proof} 
First, let us note that by $(2)$ in \cref{lemma_prop_eta}, $\sigma_{0}^{z_{i0}} = \id_X$ and $(\sigma_{a}^{z_{ij}})^{-1} = \sigma_{a^{-1}}^{z_{i\bar{j}}}$, for all $a\in X$ and $z_{i,j} \in Y$.  To get the claim, we use  \cref{le:lndsol} and so we show that, for all $a, b \in X$ and $z_{i,j} \in Y$, the map $\sigma^{z_{ij}}_a$ is an admissible twist for the structure $\left(X , \triangleright_{z_{ij}}\right)$ that we are going to compute and prove is a $p$-rack. If $a, b \in X$ and $z_{i,j} \in Y$, we have
\begin{align*}
        a\triangleright_{z_{ij}} b
     =\sigma^{z_{ji}}_{a}\tau^{z_{ij}}_{\left(\sigma^{z_{ij}}_b\right)^{-1}\left(a\right)}\left(b\right) = a\circ\left(a^{-1} +_{j\bar{i}} a^{-1}\circ\left(b+_{ij}a\right)\right)
     = \eta^{z_{j\bar{i}}}_{a^{-1}}\left(a^{-1}\circ b\circ\eta^{z_{ij}}_{b}\left(a\right)\right).
\end{align*}
Let us prove that $\left(X, \triangleright_{z_{ij}}\right)$ is a $p$-rack. If $a, b, c \in X$ and $z_{i, j, k} \in Y$, then
\begin{align*}
\left(a\triangleright_{z_{ij}} b\right)\triangleright_{z_{jk}}\left(a\triangleright_{z_{ik}} c\right)
    = \eta^{z_{k\bar{j}}}_{\left(a \, \triangleright_{z_{ij}} b\right)^{-1}}\left(\underbrace{\left(a\triangleright_{z_{ij}} b\right)^{-1}\circ\left(a\triangleright_{z_{ik}} c\right)\circ
    \eta^{z_{jk}}_{a \, \triangleright_{z_{ik}} c}
    \left(a\triangleright_{z_{ij}} b\right)}_{A}
    \right).
\end{align*}
Note that
\begin{align*}
    &\left(a\triangleright_{z_{ik}} c\right)\circ
    \eta^{jk}_{a \, \triangleright_{z_{ik}} c}
    \left(a\triangleright_{z_{ij}} b\right)\\
    &\ = \eta^{z_{k\bar{i}}}_{a^{-1}}\left(a^{-1}\circ c\circ\eta^{z_{ik}}_{c}\left(a\right)\right)\circ
    \eta^{z_{j\bar{i}}}_{
    \eta^{z_{k\bar{i}}}_{a^{-1}}\left(a^{-1}\circ \, c \, \circ \, \eta^{z_{ik}}_{c}\left(a\right)\right)}
    \eta^{z_{jk}}_{a^{-1}}\left(a^{-1}\circ b\circ\eta^{z_{ij}}_{b}\left(a\right)\right)&\mbox{by \eqref{affine1}}\\
    &\ = \eta^{z_{jk,\bar{i}}}_{a^{-1}}
    \left(a^{-1}\circ c\circ\eta^{z_{ik}}_{c}\left(a\right)\circ\eta^{z_{jk}}_{a^{-1} \, \circ \,c\circ \,\eta^{z_{ik}}_{c}\left(a\right)}\left(a^{-1}\circ b\circ\eta^{z_{ij}}_{b}\left(a\right)\right)\right)&\mbox{by \eqref{affine2-gc}}\\
    &\ = \eta^{z_{jk,\bar{i}}}_{a^{-1}}
    \left(a^{-1}\circ c\circ\eta^{z_{ik}}_{c}\left(a\right)\circ\eta^{z_{jk}}_{\eta^{z_{ik}}_{c}\left(a\right)}
    \eta^{z_{ji}}_{c}\eta^{z_{j\bar{i}}}_{a^{-1}}\left(a^{-1}\circ b\circ\eta^{z_{ij}}_{b}\left(a\right)\right)\right)&\mbox{by \eqref{affine1}}\\
    &\ =\eta^{z_{jk,\bar{i}}}_{a^{-1}}
    \left(a^{-1}\circ c\circ
    \eta^{z_{ji,k}}_{c}\left(a\circ \eta^{z_{ji}}_{a}\eta^{z_{j\bar{i}}}_{a^{-1}}\left(a^{-1}\circ b\circ\eta^{z_{ij}}_{b}\left(a\right) \right)\right)\right)&\mbox{by \eqref{affine2-gc}}\\
    &\ =\eta^{z_{kj,\bar{i}}}_{a^{-1}}
    \left(a^{-1}\circ c\circ
    \eta^{z_{ij,k}}_{c}\left(b\circ \eta^{z_{ij}}_{b}\left(a\right) \right)\right),&\mbox{by \eqref{affine1}}
\end{align*}
hence, by \eqref{affine2-gc},
\begin{align*}
    A&= \big(\eta^{z_{j\bar{i}}}_{a^{-1}}\left(a^{-1}\circ b\circ \eta^{z_{ij}}_{b}\left(a\right)\right)\big)^{-1}\circ
    \eta^{z_{kj,\bar{i}}}_{a^{-1}}
    \left(a^{-1}\circ c\circ
    \eta^{z_{ij,k}}_{c}\left(b\circ \eta^{z_{ij}}_{b}\left(a\right) \right)\right)\\
    &= \eta^{z_{k\bar{i}}}_{\eta^{z_{j\bar{i}}}_{a^{-1}}\left(a^{-1}\circ\, b\,\circ \,\eta^{z_{ij}}_{b}\left(a\right)\right)}
    \eta^{z_{k0}}_{\left(a^{-1}\, \circ \, b\, \circ \,\eta^{z_{ij}}_{b}\left(a\right)\right)^{-1}\circ a^{-1}}\left(\left(\eta^{z_{ij}}_{b}\left(a\right)\right)^{-1}\circ b^{-1}\circ c\circ\eta^{z_{ij,k}}_{c}\left(c\circ\eta^{z_{ij}}_{b}\left(a\right)\right)\right)
    \\
    &=\eta^{z_{k\bar{i}}}_{a\, \triangleright_{z_{ij}}b}\,
    \eta^{z_{k0}}_{\left(\eta^{z_{ij}}_{b}\left(a\right)\right)^{-1}\circ \, b^{-1}}\left(\left(\eta^{z_{ij}}_{b}\left(a\right)\right)^{-1}\circ b^{-1}\circ c\circ\eta^{z_{ij,k}}_{c}\left(c\circ\eta^{z_{ij}}_{b}\left(a\right)\right)\right).
\end{align*}
It follows that by \eqref{affine1}
\begin{align*}
    \left(a\triangleright_{z_{ij}} b\right)\triangleright_{z_{jk}}\left(a\triangleright_{z_{ik}} c\right)
    = \eta^{z_{k,\bar{j}\,\bar{i}}}_{\left(\eta^{z_{ij}}_{b}\left(a\right)\right)^{-1}\circ b^{-1}}\left(\left(\eta^{z_{ij}}_{b}\left(a\right)\right)^{-1}\circ b^{-1}\circ c\circ\eta^{z_{ij,k}}_{c}\left(c\circ\eta^{z_{ij}}_{b}\left(a\right)\right)\right).
\end{align*}
Now, observe that
\begin{align*}
    a\triangleright_{z_{ik}}\left(b\triangleright_{z_{jk}} c\right)
        = \eta^{z_{k\bar{i}}}_{a^{-1}}
        \big(\underbrace{a^{-1}\circ\eta^{z_{k\bar{j}}}_{b^{-1}}
        \left(
        b^{-1}\circ c\circ \eta^{z_{jk}}_{c}\left(b\right)
        \right)
        \circ\eta^{z_{ik}}_{\eta^{z_{k\bar{j}}}_{b^{-1}}
        \left(
        b^{-1}\, \circ \, c\,\circ \,\eta^{z_{jk}}_{c}\left(b\right)\right)}\left(a\right)}_{B}
        \big)
\end{align*}
Since
\begin{align*}
    \eta^{z_{k\bar{j}}}_{b^{-1}}
        &\left(
        b^{-1}\circ c\circ \,\eta^{z_{jk}}_{c}\left(b\right)
        \right)
        \circ\eta^{z_{ik}}_{\eta^{z_{k\bar{j}}}_{b^{-1}}
        \left(
        b^{-1}\, \circ\, c\, \circ \,\eta^{z_{jk}}_{c}\left(b\right)\right)}\left(a\right)\\
        &\ =
        \eta^{z_{k\bar{j}}}_{b^{-1}}
        \left(
        b^{-1}\,\circ c \, \circ \,\eta^{z_{jk}}_{c}\left(b\right)
        \right)
        \circ\eta^{z_{i\bar{j}}}_{\eta^{z_{k\bar{j}}}_{b^{-1}}
        \left(
        b^{-1}\, \circ \, c\, \circ \, \eta^{z_{jk}}_{c}\left(b\right)\right)}
        \eta^{z_{ik}}_{b^{-1}}\eta^{z_{ij}}_{b}\left(a\right)\\
        &\ = \eta^{z_{ik,\bar{j}}}_{b^{-1}}
        \big(b^{-1}\circ c\circ \eta^{z_{jk}}_{c}\left(b\right)\circ\eta^{z_{ik}}_{b^{-1}\circ \, c\, \circ\, \eta^{z_{jk}}_{c}\left(b\right)}\eta^{z_{ij}}_{b}\left(a\right)\big)&\mbox{by \eqref{affine2-gc}}\\
        &\ = \eta^{z_{ik,\bar{j}}}_{b^{-1}}
        \big(b^{-1}\circ c\circ \eta^{z_{jk}}_{c}\left(b\right)\circ\eta^{z_{ik}}_{\eta^{z_{jk}}_{c}\left(b\right)}\eta^{z_{ij}}_{c}\left(a\right)\big)&\mbox{by \eqref{affine1}}\\
        &\ = \eta^{z_{ik,\bar{j}}}_{b^{-1}}
        \big(b^{-1}\circ c\circ \eta^{z_{ij,k}}_{c}\left(b\circ\eta^{z_{ij}}_{b}\left(a\right)\right)\big),
        &\mbox{by \eqref{affine2-gc}}
\end{align*}
by \eqref{affine2-gc}, we obtain that
\begin{align*}
   B&=\big(\eta^{z_{i\bar{j}}}_{b^{-1}}\left(\eta^{z_{ij}}_{b}\left(a\right)\right)\big)^{-1}
    \circ\eta^{z_{k\bar{j}}}_{b^{-1}}
        \left(\eta^{z_{ij}}_{b^{-1}}\left(a\right)\circ\left(\eta^{z_{ij}}_{b^{-1}}\left(a\right)\right)^{-1}\circ
        b^{-1}\circ c\circ \eta^{z_{jk}}_{c}\left(b\right)
        \right) \circ \\
&\qquad\circ\eta^{z_{ik}}_{\eta^{z_{k\bar{j}}}_{b^{-1}}
        \left(
        b^{-1} \, \circ \, c\, \circ \,\eta^{z_{jk}}_{c}\left(b\right)\right)}\left(a\right) \\
    &\ = \eta^{z_{k\bar{j}}}_{\eta^{z_{i\bar{j}}}_{b^{-1}} \eta^{z_{ij}}_{b}\left(a\right)}
    \eta^{z_{k,0}}_{\left(\eta^{z_{ij}}_{b}\left(a\right)\right)^{-1}\circ \,  b^{-1}}
    \left(\left(\eta^{z_{ij}}_{b^{-1}}\left(a\right)\right)^{-1}\circ b^{-1}\circ c\circ \eta^{z_{ij,k}}_{c}\left(b\circ\eta^{z_{ij}}_{b}\left(a\right)\right)\right)\\
    &\ = \eta^{z_{k\bar{j}}}_{a}
    \eta^{z_{k,0}}_{\left(\eta^{z_{ij}}_{b}\left(a\right)\right)^{-1}\circ \,b^{-1}}\left(\left(\eta^{z_{ij}}_{b^{-1}}\left(a\right)\right)^{-1}\circ b^{-1}\circ c\circ \eta^{z_{ij,k}}_{c}\left(b\circ\eta^{z_{ij}}_{b}\left(a\right)\right)\right).
\end{align*}
Consequently, by \eqref{affine1},
\begin{align*}
a\triangleright_{z_{ik}}\left(b\triangleright_{z_{jk}} c\right)
    &= 
    \eta^{z_{k, \bar{i}\,\bar{j}}}_{\left(\eta^{z_{ij}}_{b}\left(a\right)\right)^{-1}\circ b^{-1}}\left(\left(\eta^{z_{ij}}_{b^{-1}}\left(a\right)\right)^{-1}\circ b^{-1}\circ c\circ \eta^{z_{ij,k}}_{c}\left(b\circ\eta^{z_{ij}}_{b}\left(a\right)\right)\right).
\end{align*}
Observing that,  by \eqref{affine1}, $\eta^{z_{k, \bar{i}\,\bar{j}}}=\eta^{z_{k, \bar{j}\,\bar{i}}}$, condition \eqref{shelf} is satisfied and so $\left(X, \triangleright_{z_{ij}}\right)$ is a $p$-shelf.\\
Now, we show that the maps $L_a^{z_{ij}}: X \to X$ are bijective.
%
%
%
%
Assume that $L_{a}^{z_{ij}}\left(b\right) = L_{a}^{z_{ij}}\left(c\right)$. Then 
$b +_{ij} a = c +_{ij} a$, hence, by \cref{prop_skewpbrace}, for the right cancellativity of $+_{ij}$, we have $b=c$.
Now, let $y\in X$ and set $x:= a\circ\eta^{z_{ji}}_a\left(y\right) +_{\bar{i}, ji}\eta^{z_{\bar{i}\,\bar{i}}}_{a^{-1}}\left(a^{-1}\right)$.  Then
\begin{align*}
    x +_{ij} a 
    &= \left(a\circ\eta^{z_{ji}}_a\left(y\right) +_{\bar{i}, ji}\eta^{z_{\bar{i}\,\bar{i}}}_{a^{-1}}\left(a^{-1}\right)\right) +_{ij} a \\
    &= a\circ\eta^{z_{ji}}_a\left(y\right) +_{i\,\bar{i}, ji} \left(\eta^{z_{\bar{i}\, \bar{i}}}_{a^{-1}}\left(a^{-1}\right) +_{i\bar{i}} a\right)&\mbox{ $\left(X, +_{ij}, \circ \right)$ is a skew $p$-brace}\\
    &= a\circ\eta^{z_{ji}}_a\left(y\right) +_{0, ji} 0 &\mbox{by $(3)$ of \cref{lemma_prop_eta}}\\
    &= a\circ\eta^{z_{ji}}_a\left(y\right) &\mbox{ $\left(X, +_{ij}, \circ \right)$ is a skew $p$-brace}
\end{align*}
Hence, $
    L^{z_{ij}}_a\left(x\right)
     = \eta^{z_{j\bar{i}}}_{a^{-1}}\left(a^{-1}\circ \left(x +_{ij} a\right)\right)
     = \eta^{z_{j\bar{i}}}_{a^{-1}}\eta^{z_{ji}}_{a}\left(y\right) = y$, i.e., $L^{z_{ij}}_a$ is surjective.\\
     Finally, since $\sigma^{z_{ij}}_a(b) \circ \tau^{z_{ij}}_b(a)=a\circ b$
and, by \eqref{affine1} $\sigma^{z_{i, jk}}_{a\circ b} = \sigma^{z_{i, kj}}_{a\circ b}$, for all $a,b \in X$ and $z_{i, j, k} \in Y$, we obtain condition $(1)$ in \cref{admi2}.  Moreover, 
\begin{align*}
    \sigma^{z_{ik}}_c\left(b\right)&\triangleright_{z_{ij}}\sigma^{z_{jk}}_c\left(a\right)
    = \eta^{z_{j\bar{i}}}_{\left(\eta^{z_{i\bar{k}}}_{c^{-1}}\left(b\right)\right)^{-1}}
    \big(\left(\eta^{z_{i\bar{k}}}_{c^{-1}}\left(b\right)\right)^{-1}\circ \eta^{z_{j\bar{k}}}_{c^{-1}}\left(a\right)\circ
    \eta^{z_{ij}}_{\eta^{z_{j\bar{k}}}_{c^{-1}}\left(a\right)}\eta^{z_{i\bar{k}}}_{c^{-1}}\left(b\right)\big)\\
    &=\eta^{z_{j\bar{i}}}_{\left(\eta^{z_{i\bar{k}}}_{c^{-1}}\left(b\right)\right)^{-1}}
    \big(\left(\eta^{z_{i\bar{k}}}_{c^{-1}}\left(b\right)\right)^{-1}\circ \eta^{z_{j\bar{k}}}_{c^{-1}}\left(a\right)\circ
    \eta^{z_{i\bar{k}}}_{\eta^{z_{j\bar{k}}}_{c^{-1}}\left(a\right)}\eta^{z_{ij}}_{c^{-1}}\left(b\right)\big)&\mbox{by \eqref{affine1}}\\
    &=\eta^{z_{j\bar{i}}}_{\left(\eta^{z_{i\bar{k}}}_{c^{-1}}\left(b\right)\right)^{-1}}
    \big(\left(\eta^{z_{i\bar{k}}}_{c^{-1}}\left(b\right)\right)^{-1}\circ \eta^{z_{ji,\bar{k}}}_{c^{-1}}\left(a\circ\eta^{z_{ij}}_{a}\left(b\right)\right)\big)&\mbox{by \eqref{affine2-gc}-\eqref{eta_comm}}\\
    &= \eta^{z_{j\bar{i}}}_{\left(\eta^{z_{i\bar{k}}}_{c^{-1}}\left(b\right)\right)^{-1}}
    \big(\left(\eta^{z_{i\bar{k}}}_{c^{-1}}\left(b\right)\right)^{-1}\circ \eta^{z_{ji,\bar{k}}}_{c^{-1}}\left(b\circ\eta^{z_{ji}}_{b}\left(\eta^{z_{j\bar{i}}}_{b^{-1}}\left(b^{-1}\circ a\circ\eta^{z_{ij}}_{a}\left(b\right)\right)\right)\right)\\
    &= \eta^{z_{j\bar{i}}}_{\left(\eta^{z_{i\bar{k}}}_{c^{-1}}\left(b\right)\right)^{-1}}
    \big(\left(\eta^{z_{i\bar{k}}}_{c^{-1}}\left(b\right)\right)^{-1}\circ \eta^{z_{i\bar{k}}}_{c^{-1}}\left(b\right)\circ \eta^{z_{j\bar{k}}}_{\eta^{z_{i\bar{k}}}_{c^{-1}}\left(b\right)}\eta^{z_{ji}}_{c^{-1}}\eta^{z_{j\bar{i}}}_{b^{-1}}\left(b^{-1}\circ a\circ\eta^{z_{ij}}_{a}\left(b\right)\right)\big) &\mbox{by \eqref{affine2-gc}}\\
    &=\eta^{z_{j\bar{i}}}_{\left(\eta^{z_{i\bar{k}}}_{c^{-1}}\left(b\right)\right)^{-1}}\eta^{z_{j\bar{k}}}_{b^{-1}\circ c^{-1}\circ \eta^{z_{i\bar{k}}}_{c^{-1}}\left(b\right)}\left(b^{-1}\circ a\circ\eta^{z_{ij}}_{a}\left(b\right)\right) &\mbox{by \eqref{affine1}}\\
    &= \eta^{z_{j,\bar{k}\bar{i}}}_{b^{-1}\circ c^{-1}}\left(b^{-1}\circ a\circ\eta^{z_{ij}}_{a}\left(b\right)\right)&\mbox{by \eqref{affine1}}\\   &=\sigma^{z_{jk}}_c\left(b\triangleright_{z_{ij}}a\right), &\mbox{by \eqref{affine1}}
\end{align*}
i.e., \cref{admi2} (2) is satisfied. Consequently maps $\sigma_a^{z_{ij}}$ determine an admissible twist and by \cref{le:lndsol} the claim follows.
\end{proof}
\end{thm}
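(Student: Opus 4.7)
The plan is to apply \cref{le:lndsol}: once I exhibit a $p$-shelf operation $\triangleright_{z_{ij}}$ on $X$ that is consistent with the given $\sigma,\tau$, and verify that the associated twist $\varphi^{z_{ij}}(a,b)=(a,\sigma^{z_{ji}}_a(b))$ is admissible in the sense of \cref{admi2}, the theorem immediately delivers that $R^{z_{ij}}$ is a solution. Two preliminary facts fall out at once from \cref{lemma_prop_eta}(2) and \cref{rem_eta}: $(\sigma^{z_{ij}}_a)^{-1} = \sigma^{z_{i\bar j}}_{a^{-1}}$ and $\sigma^{z_{i0}}_0 = \id_X$. The candidate rack operation is obtained from the standard formula $a \triangleright_{z_{ij}} b = \sigma^{z_{ji}}_a\tau^{z_{ij}}_{(\sigma^{z_{ij}}_b)^{-1}(a)}(b)$ highlighted in the Remark after \cref{prose}, and evaluates to
\[
a \triangleright_{z_{ij}} b \;=\; \eta^{z_{j\bar i}}_{a^{-1}}\bigl(a^{-1} \circ b \circ \eta^{z_{ij}}_b(a)\bigr),
\]
which is what must be analysed below.

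I will then prove $(X,\triangleright_{z_{ij}})$ is a $p$-rack. For the generalized self-distributivity \eqref{shelf} I would expand both sides and drive them into a common normal form using the three tools at my disposal: \eqref{affine1} to merge compositions $\eta^{z_{ij}}_b\,\eta^{z_{ik}}_a$ into $\eta^{z_{i,jk}}_{a\circ b}$ and exploit the $jk$-symmetry, \eqref{affine2-gc} to re-bracket nested expressions of the form $\eta^{z}_a(b \circ \eta^{z'}_b(c))$, and \eqref{eta_comm} to swap the first pair of parameters. For the bijectivity of $L^{z_{ij}}_a$ I invoke \cref{prop_skewpbrace}: since $(X,+_{ij},\circ)$ is a skew $p$-brace, the quasigroup property of $+_{ij}$ yields injectivity of $L^{z_{ij}}_a$ by right cancellation, while surjectivity is witnessed by the explicit preimage $x = a \circ \eta^{z_{ji}}_a(y) +_{\bar i, ji} \eta^{z_{\bar i\,\bar i}}_{a^{-1}}(a^{-1})$, whose image under $L^{z_{ij}}_a$ computes to $y$ using part (3) of \cref{lemma_prop_eta} together with the skew $p$-brace axioms.

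Next I verify the two admissibility conditions of \cref{admi2}. Condition (1) follows from the structure-group identity $\sigma^{z_{ij}}_a(b) \circ \tau^{z_{ij}}_b(a) = a \circ b$ together with the parameter symmetry $\sigma^{z_{i,jk}}_{a\circ b} = \sigma^{z_{i,kj}}_{a\circ b}$, itself immediate from \eqref{affine1} applied to $\sigma^{z_{ij}}_a = \eta^{z_{i\bar j}}_{a^{-1}}$. For condition (2), the compatibility $\sigma^{z_{ik}}_c(b)\triangleright_{z_{ij}}\sigma^{z_{jk}}_c(a) = \sigma^{z_{jk}}_c(b\triangleright_{z_{ij}} a)$, I expand the left-hand side using the definitions, apply \eqref{affine2-gc} to merge an inner composition into a single $\eta^{z_{ji,\bar k}}$, use \eqref{eta_comm} to align the two innermost parameters, and finally apply \eqref{affine2-gc} and \eqref{affine1} once more to collapse the outer factors into $\eta^{z_{j,\bar k\bar i}}$ acting on $b^{-1} \circ a \circ \eta^{z_{ij}}_a(b)$, which is exactly $\sigma^{z_{jk}}_c(b\triangleright_{z_{ij}} a)$.

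The main obstacle will be the self-distributivity check for $\triangleright_{z_{ij}}$, since both sides contain triply nested $\eta$-compositions across three parameters and there is no canonical order in which to apply the defining relations. The crucial manoeuvre is to use \eqref{affine2-gc} in its outward-pushing form
\[
\eta^{z_{kj,i}}_a\bigl(b \circ \eta^{z_{kj}}_b(c)\bigr) = \eta^{z_{ji}}_a(b) \circ \eta^{z_{ki}}_{\eta^{z_{ji}}_a(b)} \eta^{z_{kj}}_a(c),
\]
iteratively combined with \eqref{affine1} and \eqref{eta_comm}, so that both sides of \eqref{shelf} reduce to a single $\eta^{z_{k,\bar i\bar j}}$ (equivalently $\eta^{z_{k,\bar j\bar i}}$) applied to the same core expression in $a,b,c$ and the inverses of $b$ and of $\eta^{z_{ij}}_b(a)$.
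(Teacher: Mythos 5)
Your proposal follows essentially the same route as the paper's proof: reduction to \cref{le:lndsol} via the same formula $a\triangleright_{z_{ij}}b=\eta^{z_{j\bar i}}_{a^{-1}}\left(a^{-1}\circ b\circ\eta^{z_{ij}}_b(a)\right)$, the same normal form $\eta^{z_{k,\bar j\bar i}}$ for both sides of \eqref{shelf}, the same injectivity-by-cancellation and the identical explicit surjectivity witness, and the same verification of the two admissibility conditions. The heavy $\eta$-manipulations are only sketched rather than written out, but the tools (\eqref{affine1}, \eqref{affine2-gc}, \eqref{eta_comm}) and the target expressions you identify are exactly those the paper uses, so the plan is correct and faithful to the published argument.
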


\begin{rem}
Let us observe that the shelf operation $\triangleright_{z_{ij}}$ can be written as 
\begin{align}\label{triangle}
    a \triangleright_{z_{ij}} b = \eta^{z_{j \bar i}}_{a^{-1}}\left(a^{-1}\right) +_{0\bar{i}} \left(b+_{ij} a\right)
\end{align}
for all $a, b \in X$ and $z_{i,j}\in Y$, that is indeed the conjugation quandle $a \triangleright b=-a+b+a$ in the non-parametric case. Indeed, by $(1)$ in \cref{lemma_prop_eta},
\begin{align*}
    a \triangleright_{z_{ij}} b=\eta^{z_{j\bar{i}}}_{a^{-1}}\left(a^{-1}\circ b\circ\eta^{z_{ij}}_{b}\left(a\right)\right)
    =\eta^{z_{j\bar{i}}}_{a^{-1}}\left(a^{-1} \right)+_{0\bar{i}} b \circ \eta_b^{z_{ij}}(a)=\eta^{z_{j \bar i}}_{a^{-1}}\left(a^{-1}\right) +_{0\bar{i}} \left(b+_{ij} a\right),
\end{align*} 
for all $a,b\in X$ and $z_{i, j}\in Y$.
\end{rem}

\begin{pro}
Let $\left(X, +_{ij}, \circ \right)$ be a skew $p$-brace and $Y \leq X$. Then, the map $\eta^{z_{ij}}_a:X\to X$, given by $\eta^{z_{ij}}_{a}\left(b\right):= a^{-1}\circ\left(a +_{ij} b\right)$, 
for all $a,b\in X$ and $z_{i,j} \in Y$, defines a $p$-affine structure $\eta^{z_{ij}}$ on the group $\left(X,\,\circ \right)$.
\begin{proof}
The proof is similar to that of \cref{pro_p_brace}.
\end{proof}
\end{pro}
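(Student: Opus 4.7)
The plan is to verify, one by one, the four defining conditions of a $p$-affine structure for the maps $\eta^{z_{ij}}_a\left(b\right) := a^{-1}\circ\left(a +_{ij} b\right)$, in parallel with the argument for the reversible case in \cref{pro_p_brace}. First I would note that each $\eta^{z_{ij}}_a$ is a bijection: since $\left(X, +_{ij}\right)$ is a quasigroup (by the proposition preceding \cref{prop_skewpbrace}), the translation $b \mapsto a +_{ij} b$ is bijective on $X$, and $\eta^{z_{ij}}_a$ inherits this. Next, I would check \eqref{eta_0} directly, since $\eta^{z_{0j}}_a\left(0\right) = a^{-1}\circ\left(a +_{0j} 0\right) = a^{-1}\circ a = 0$ by axiom $(3)$ of \cref{skew p-brace}.

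For \eqref{affine1}, I would exploit the distributivity \eqref{prop-p-brace}. Expanding both sides of $a\circ\left(b +_{k,ij} c\right) = a\circ b +_{ki} a\circ\left(a^{-1} +_{kj} c\right)$ using the definition of $+_{ij}$ in terms of $\eta$, and observing that $a\circ\left(a^{-1} +_{kj} c\right) = \eta^{z_{kj}}_{a^{-1}}\left(c\right)$, yields the identity $\eta^{z_{k,ij}}_b = \eta^{z_{ki}}_{a\circ b}\,\eta^{z_{kj}}_{a^{-1}}$ for every $a \in X$. Substituting $a \mapsto a^{-1}$ and $b \mapsto a\circ b$, together with relabeling $k \to i,\ i\to j,\ j\to k$, gives the first equality of \eqref{affine1}. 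The second equality $\eta^{z_{i,jk}}_{a\circ b} = \eta^{z_{i,kj}}_{a\circ b}$ is immediate from axiom $(2)$ of \cref{skew p-brace}, and \eqref{eta_comm} then follows by the analogous symmetry argument applied to the first-index position together with the established \eqref{affine1}.

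The main obstacle will be \eqref{affine2-gc}, which I plan to derive from axiom $(1)$, namely $\left(a +_{ij} b\right) +_{k,ij} c = a +_{ki,j}\left(b +_{ki} c\right)$. Expanding the two sides via $+_{ij}$, the left-hand side becomes $a\circ \eta^{z_{ij}}_a\left(b\right)\circ \eta^{z_{k,ij}}_{a\circ\eta^{z_{ij}}_a\left(b\right)}\left(c\right)$ and the right-hand side becomes $a\circ \eta^{z_{ki,j}}_a\left(b\circ \eta^{z_{ki}}_b\left(c\right)\right)$; after cancelling $a$ on the left one applies the already-established \eqref{affine1} to rewrite $\eta^{z_{k,ij}}_{a\circ\eta^{z_{ij}}_a\left(b\right)}$ as the composition $\eta^{z_{ki}}_{\eta^{z_{ij}}_a\left(b\right)}\,\eta^{z_{kj}}_a$ and invokes \eqref{eta_comm} to interchange the composite first-index. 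A suitable relabeling $i\leftrightarrow j$ then produces exactly \eqref{affine2-gc}. The delicate part of the argument is the bookkeeping of the composite indices $k,ij$ versus $ki,j$, which is precisely where the weaker symmetry axioms of the skew $p$-brace — compared with the $p$-brace — make the derivation more intricate than in \cref{pro_p_brace}.
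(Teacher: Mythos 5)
Your verifications of bijectivity, of \eqref{eta_0}, of \eqref{affine1} (via \eqref{prop-p-brace}, giving $\eta^{z_{k,ij}}_b = \eta^{z_{ki}}_{a\circ b}\eta^{z_{kj}}_{a^{-1}}$ and then relabelling), and of \eqref{affine2-gc} (via axiom $(1)$ of \cref{skew p-brace} plus \eqref{affine1}) are sound and match what the paper's one-line proof implicitly asks for. Two remarks on \eqref{affine2-gc}: what you actually need to finish that computation is the \emph{second-index} symmetry $\eta^{z_{k,ij}}_{u}=\eta^{z_{k,ji}}_{u}$, i.e.\ the second equality of \eqref{affine1} coming from axiom $(2)$ — not \eqref{eta_comm}; invoking \eqref{eta_comm} there is both unnecessary and circular, since at that point you have not established it.

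The genuine gap is \eqref{eta_comm} itself, i.e.\ $\eta^{z_{ij,k}}_a=\eta^{z_{ji,k}}_a$, equivalently $a+_{ij,k}b=a+_{ji,k}b$. You assert it "follows by the analogous symmetry argument applied to the first-index position", but there is no such analogue: axiom $(2)$ of a skew $p$-brace gives symmetry only in the \emph{second} parameter, and \eqref{affine1} only decomposes composites sitting in the second slot. In the reversible case (\cref{pro_p_brace}) the transfer to the first slot is achieved precisely by the commutativity clause $a+_{i,jk}b=b+_{jk,i}a$ of axiom $(2)$ in \cref{p-brace} — this is the mechanism used in the paper's remark showing reversible $p$-affine structures satisfy \eqref{eta_comm} — and that clause is exactly what is dropped in \cref{skew p-brace}. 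The only skew axiom constraining an operation with composite first parameter is axiom $(1)$, which expresses $\eta^{z_{ki,j}}_a$ on elements $b+_{ki}c$ in terms of simple-index maps in a way that is not manifestly symmetric under $k\leftrightarrow i$; comparing the two resulting expressions does not obviously close. So you must either produce an actual derivation of $a+_{ij,k}b=a+_{ji,k}b$ from axioms $(1)$–$(4)$ and \eqref{prop-p-brace}, or observe that it holds trivially whenever the relevant parameters commute in $(Y,\circ)$ (as in all of the paper's examples) — but as written this step is unjustified.
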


The following result shows that the structures derived from \cref{p2} (2) are skew $p$-braces.
\begin{pro}
    Let $(X, +, \circ)$ be a skew brace, $Y \subseteq {\cal D}(X) \cap Z(X, +)$, and suppose, for all $z_{i,j}\in Y,$  $z_i\circ z_j= z_j \circ z_i$. Let $\sigma_{a}^{z_{ij}}(b) := z_i^{-1} - a \circ z_i^{-1} \circ z_j + a \circ b\circ z_j$ 
    for all $a, b \in X$ and $z_{i,j} \in Y$, and set
    \begin{align*}
        a+_{ij}b:= a \circ\left( \sigma_a^{{z_{ij}}}\right)^{-1}(b).
    \end{align*}
    Then, the structure $\left(X,\,+_{ij}, \circ\right)$ is a skew $p$-brace.
    \begin{proof}
     The proof is similar to that of \cref{pro_pbrace}, since it is enough to show that the map $\left(\sigma_{a}^{z_{ij}}\right)^{-1}$ provides a $p$-affine structure.
    \end{proof}
\end{pro}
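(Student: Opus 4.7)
The strategy mirrors the proof of \cref{pro_pbrace}. I would set $\eta^{z_{ij}}_a := \left(\sigma^{z_{ij}}_a\right)^{-1}$, so that by construction $a +_{ij} b = a\circ\eta^{z_{ij}}_a(b)$, and then show that $\eta^{z_{ij}}$ is a $p$-affine structure on $(X,\circ)$. The conclusion then follows at once from \cref{prop_skewpbrace}.

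The first preparatory step is to verify that $\sigma^{z_{ij}}_a$ is bijective with inverse $\sigma^{z_{i\bar j}}_{a^{-1}}$. The argument for this identity in the brace setting of \cref{pro_pbrace} uses only the skew brace distributivity \eqref{def:dis}, the centrality of $z_j$ in $(X,+)$, the condition $z_j\in\mathcal{D}(X)$, and $z_i\circ z_j = z_j\circ z_i$; it never invokes abelianness of $(X,+)$, so it carries over verbatim. In the same way, the composition relation $\sigma^{z_{ij}}_a\sigma^{z_{ik}}_b = \sigma^{z_{i,kj}}_{a\circ b}$ recorded inside the proof of \cref{pro_pbrace} is purely formula-driven and remains valid here. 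Inverting it yields the first equality in \eqref{affine1}; swapping the roles of $j$ and $k$ and using $z_j\circ z_k = z_k\circ z_j$ gives the second equality in \eqref{affine1} as well as the symmetry \eqref{eta_comm}. The normalization \eqref{eta_0} is immediate from $z_0 = 0$ substituted into the explicit expression for $\sigma^{z_{0j}}_a$.

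The main obstacle is verifying the compatibility \eqref{affine2-gc}. I would deduce it from the admissibility of $\sigma^{z_{ij}}_a$ as a Drinfel'd twist, which is exactly the content of \cref{p2}(2)(b). More precisely, condition (1) of \cref{admi2} combined with the structure group identity $a\circ b = \sigma^{z_{ij}}_a(b)\circ\tau^{z_{ij}}_b(a)$ gives a relation among products of $\sigma$'s which, after inversion and after using the explicit form of the conjugate $p$-rack $\triangleright_{z_{ij}}$ from \cref{p2}(2)(a), rearranges into precisely \eqref{affine2-gc} for $\eta^{z_{ij}}$. This is the only step that requires genuine calculation, since it intertwines the multiplicative group $(X,\circ)$, the skew brace addition, and the induced $p$-rack operation, but it is structurally the same rearrangement used in the reverse direction inside \cref{thm_skew_p}. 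Once $\eta^{z_{ij}}$ is verified to be a $p$-affine structure, \cref{prop_skewpbrace} produces the skew $p$-brace $\left(X, +_{ij}, \circ\right)$, completing the proof.
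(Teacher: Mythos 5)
Your overall architecture is the right one and matches the paper's: set $\eta^{z_{ij}}_a:=\bigl(\sigma^{z_{ij}}_a\bigr)^{-1}$, prove that $\eta^{z_{ij}}$ is a $p$-affine structure, and invoke \cref{prop_skewpbrace}. Your treatment of \eqref{affine1}, \eqref{eta_0} and \eqref{eta_comm} is also sound: the composition relation $\sigma^{z_{ik}}_a\sigma^{z_{ij}}_b=\sigma^{z_{i,jk}}_{a\circ b}$ really is formula-driven (it uses only \eqref{def:dis}, $Y\subseteq\mathcal{D}(X)\cap Z(X,+)$ and the commutativity of the $z$'s, never commutativity of $+$), so it survives the passage from braces to skew braces, and inverting it gives \eqref{affine1}.

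The gap is in your verification of \eqref{affine2-gc}, which is the entire content of the skew case. In \cref{pro_pbrace} one never checks \eqref{affine2-gc} directly: reversibility supplies \eqref{affine2}, and the remark following the definition of $p$-affine structures shows that \eqref{affine2}$+$\eqref{affine1} imply \eqref{affine2-gc}. Here \eqref{affine2} fails, so \eqref{affine2-gc} must be established by hand, and your proposed source for it cannot deliver. Using \eqref{affine1}, condition \eqref{affine2-gc} is equivalent to $(a+_{ji}b)+_{k,ij}c=a+_{kj,i}(b+_{kj}c)$, i.e.\ to axiom $(1)$ of \cref{skew p-brace}; this is genuinely new information about how $\eta^{z_{ij}}_a$ acts on \emph{products in its argument}, whereas \eqref{affine1} only governs the dependence on the \emph{subscript}. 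By contrast, condition $(1)$ of \cref{admi2} for this particular $\sigma$ is vacuous modulo what you already have: the paper itself records in \cref{ex_conj} that it is equivalent to the structure-group relation $a\circ b=\sigma^{z_{ij}}_a(b)\circ\tau^{z_{ij}}_b(a)$, and indeed both sides of \cref{admi2}$(1)$ collapse to $\sigma^{z_{i,jk}}_{a\circ b}=\sigma^{z_{i,kj}}_{a\circ b}$ once you apply the composition relation and that identity. A tautology cannot "rearrange into precisely \eqref{affine2-gc}". (Nor does the paper prove the converse implication "admissible twist $\Rightarrow$ $p$-affine"; \cref{thm_skew_p} only goes the other way.) The step you are missing is a direct computation: from the skew-brace axioms one gets the closed form $a+_{ij}b=a\circ z_i^{-1}-z_i^{-1}\circ z_j^{-1}+b\circ z_j^{-1}$, and then both sides of $(a+_{ij}b)+_{k,ij}c=a+_{ki,j}(b+_{ki}c)$ expand, using $z\in\mathcal{D}(X)$ and $z_i\circ z_j=z_j\circ z_i$, to $a\circ z_i^{-1}\circ z_k^{-1}-z_i^{-1}\circ z_j^{-1}\circ z_k^{-1}+b\circ z_j^{-1}\circ z_k^{-1}-z_i^{-1}\circ z_j^{-1}\circ z_k^{-1}+c\circ z_i^{-1}\circ z_j^{-1}$. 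That is the calculation the paper's phrase "similar to \cref{pro_pbrace}" is implicitly pointing at, and it is the one your proposal does not actually perform.
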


\begin{rem}
    We observe that the binary operation $\bullet_{z_{ij}}$ defined in \cref{p1} and \cref{p2} can be written in particular in terms of $+_{ij}$ in \cref{thm_affine} and \cref{prop_skewpbrace}, namely, as $a\bullet_{z_{ji}} b = \left(a+_{ij} b\right)\circ h\left(z_{ij}\right)$, for all $a,b\in X$ and $z_{i,j}\in Y$.
\end{rem}

\smallskip

\section{\texorpdfstring{$p$-}{}rack Yang-Baxter \texorpdfstring{$\&$}{} reflection operators}

\noindent In this section, we study the underlying Yang-Baxter and reflection algebras, and their coproduct structures in the set-theoretic frame.
Specifically, we focus on the reflection algebraic structures associated to $p$-rack solutions of Yang-Baxter equation.
Bearing in mind the definition of braided groups and braidings in \cite{chin}, \cite{GatMaj} 
and their deformations \cite{DoiRyb22, DoRy23, DoRySt} we further
generalize the definitions to introduce the parametric Yang-Baxter and reflection structures. The
parametric Yang-Baxter operator and associated algebraic structures were studied in \cite{Doikoup}.

It is useful to introduce the following notation. Let $X$, $Y \subseteq X$ be non-empty sets and introduce, for all 
$z_{i, j, k} \in Y$, the maps 
$M^{z_{ijk}}_b \in \{g^{z_{ijk}}_{b}, \hat g^{z_{ijk}}_{b} \},$ $M_b^{z_{ijk}}: X \to X,$
$a \mapsto 
M^{z_{ijk}}_b(a)$ and the map $\tau_b^{z_{ij}}: X \to X,$ $a \mapsto \tau^{z_{ij}}_b(a).$

\begin{defn} \label{defA}
Let $X$ and $Y \subseteq X$ be non-empty sets and let, for all $z_{i,j} \in Y$,
$\bullet_{z_{ij}}: X \times X \to X,$ $(a, b) \mapsto a \bullet_{z_{ij}} b$ be a binary operation. 
Consider also, for all $ z_{i,j} \in Y$, the following maps, $m_{z_{ij}}: X \times X \to X,\ (a,b)\mapsto a \bullet_{z_{ij}} b,$
$~\pi(a,b)=(b,a)$,
\[R^{z_{ij}}(b,a) = (b,  \tau^{z_{ij}}_b(a)),\quad \xi^{z_{ijk}}(b, a) = (b,  g_b^{z_{ijk}}(a)),  \quad  \zeta^{z_{ijk}}(b,a) = (b, \hat g^{z_{ijk}}_b(a)),\]
such that $\tau_b^{z_{ij}},\ g_b^{z_{ijk}},\ \hat g_b^{z_{ijk}}$ are bijections for all $a, b \in X,$ $z_{i,j,k} \in Y$ and
$g^{z_{ijk}}_b(a) = g^{z_{ikj}}_b(a),$ $\hat g^{z_{ijk}}_b(a) = \hat g^{z_{jik}}_b(a)$ and $K^{z_i}: X \to X,$ $a\mapsto \kappa^{z_i}(a),$ where $\kappa^{z_i}$ is a bijection for all $z_i \in Y.$ Let also $\hat m_{z_{ij}} := m_{z_{ij}} \pi.$ 
Then $(X, \bullet_{z_{ij}})$ is called a \emph{$p$-rack magma}, the map $R^{z_{ij}}$ is called  a 
\emph{$p$-rack operator} and the map $K^z$ is called a \emph{$p$-rack reflection operator}, if for all 
$a,b,c \in X,$ $z_{i,j,k} \in Y:$
\begin{enumerate}
\item[$(1)$] $\hat m_{z_{ji}} (b, a) =m_{z_{ij}} (R^{z_{ij}} (b, a)).$
\item[$(2)$] $\xi^{z_{ikj}} (\id_X \times \hat m_{kj} ) (c,b,a)= (\id_X \times \hat  m_{kj}) R_{13}^{z_{ik}} R_{12}^{z_{ij}} (c,b,a).$
\item[$(3)$] $\zeta^{z_{ijk}}(\hat m_{z_{ji}} \times \id_X )(c,b,a)= (\hat m_{z_{ji}} \times \id_X)  R_{13}^{z_{ik}}R_{23}^{z_{jk}}(c,b,a).$
\item[$(4)$] $(\id_X \times K^{z_j})R^{z_{i\bar j}}(a,b)= R^{z_{ij}} (\id_X \times K^{z_j})(a,b).$
\item[$(5)$] $(K^{z_i} \times \id_X) R^{z_{\bar i j}}(a,b)=R^{z_{i j}}(K^{z_i} \times \id_X)(a,b).$
\end{enumerate}
\end{defn}

\begin{pro}\label{le:p-def1}
Let $(X,\bullet_{z_{ij}})$ be a $p$-rack magma, consider a map 
$R^{z_{ij}}: X \times X \to X \times X,$ $R^{z_{ij}}(b,a) = (b,  \tau^{z_{ij}}_b(a))$, such that $R^{z_{ij}}$ is a 
$p$-rack operator, and $K^{z_{i}}: X \to X,$ $a \mapsto \kappa^{z_{i}}(a)$ a $p$-rack reflection operator, for all $z_{i,j}\in Y.$
Then,
\begin{enumerate}
\item[$(1)$] $R^{z_{ij}}$ satisfies the parametric Yang-Baxter equation.

\item[$(2)$] $K^{z_i}$ satisfies the reflection equation.

\item[$(3)$]  $\xi^{z_{ijk}}_{12}K_1^{z_i} \zeta^{z_{jk\bar i}}_{21} (\id_X \times \hat m_{z_{kj}})(c,b,a)=(\id_X \times \hat m_{z_{kj}})R^{z_{ik}}_{13} R^{z_{ij}}_{12}K_1^{z_i}R^{z_{j\bar i}}_{21}R^{z_{k\bar i}}_{31} (c,b,a).$
\end{enumerate}
\end{pro}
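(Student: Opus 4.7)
The overall strategy is to translate the five structural conditions defining a $p$-rack magma, $p$-rack operator, and $p$-rack reflection operator into pointwise identities on the component maps, and then verify each of the three statements by direct computation on $(c,b,a)$ or $(b,a)$. Unpacking the definitions of $R^{z_{ij}}$, $K^{z_i}$, $m_{z_{ij}}$, $\hat m_{z_{ij}}$, $\xi^{z_{ijk}}$, $\zeta^{z_{ijk}}$ yields
\begin{align*}
\text{(1)}\quad & a\bullet_{z_{ji}} b = b\bullet_{z_{ij}} \tau_b^{z_{ij}}(a),\\
\text{(2)}\quad & g_c^{z_{ikj}}(a\bullet_{z_{kj}} b) = \tau_c^{z_{ik}}(a)\bullet_{z_{kj}}\tau_c^{z_{ij}}(b),\\
\text{(3)}\quad & \hat g^{z_{ijk}}_{b\bullet_{z_{ji}} c}(a) = \tau_c^{z_{ik}}(\tau_b^{z_{jk}}(a)),\\
\text{(4)}\quad & \kappa^{z_j}(\tau_a^{z_{i\bar j}}(b)) = \tau_a^{z_{ij}}(\kappa^{z_j}(b)),\\
\text{(5)}\quad & \tau_a^{z_{\bar i j}}(b) = \tau_{\kappa^{z_i}(a)}^{z_{ij}}(b).
\end{align*}

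For part (1), since the first slot is preserved by $R$, comparing the third components of the two sides of \eqref{YBE} on $(c,b,a)$ reduces the claim to $\tau_c^{z_{13}}(\tau_b^{z_{23}}(a)) = \tau_{\tau_c^{z_{12}}(b)}^{z_{23}}(\tau_c^{z_{13}}(a))$. Applying (3) to each side yields $\hat g^{z_{123}}_{b\bullet_{z_{21}} c}(a)$ and $\hat g^{z_{213}}_{c\bullet_{z_{12}}\tau_c^{z_{12}}(b)}(a)$, respectively; (1) identifies $c\bullet_{z_{12}}\tau_c^{z_{12}}(b) = b\bullet_{z_{21}} c$, and the built-in symmetry $\hat g^{z_{ijk}}=\hat g^{z_{jik}}$ equates the two.

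For part (2), expanding both sides of the reflection equation on $(b,a)$, the first components reduce to $\kappa^{z_1}(\tau_{\kappa^{z_2}(a)}^{z_{2\bar 1}}(b))$ and $\kappa^{z_1}(\tau_a^{z_{\bar 2\bar 1}}(b))$, which agree by (5) with $i=2$, $j=\bar 1$. Denoting their common value by $B$, the second components become $\tau_B^{z_{12}}(\kappa^{z_2}(a))$ and $\kappa^{z_2}(\tau_B^{z_{1\bar 2}}(a))$, which agree by (4) with $i=1$, $j=2$ (and $a$ replaced by $B$, $b$ replaced by $a$).

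For part (3), computing the right-hand side by successive application of $R_{31}^{z_{k\bar i}}$, $R_{21}^{z_{j\bar i}}$, $K_1^{z_i}$, $R_{12}^{z_{ij}}$, $R_{13}^{z_{ik}}$, and $(\mathrm{id}\times\hat m_{z_{kj}})$ on $(c,b,a)$ yields $(C,\,\tau_C^{z_{ik}}(a)\bullet_{z_{kj}}\tau_C^{z_{ij}}(b))$, where $C:=\kappa^{z_i}(\tau_b^{z_{j\bar i}}(\tau_a^{z_{k\bar i}}(c)))$. Computing the left-hand side produces $(\kappa^{z_i}(\hat g^{z_{jk\bar i}}_{a\bullet_{z_{kj}} b}(c)),\,g^{z_{ijk}}_{\kappa^{z_i}(\hat g^{z_{jk\bar i}}_{a\bullet_{z_{kj}} b}(c))}(a\bullet_{z_{kj}} b))$; condition (3) applied with indices relabeled $(i,j,k)\to(j,k,\bar i)$ identifies the first component with $C$, and condition (2) together with the symmetry $g^{z_{ijk}}=g^{z_{ikj}}$ matches the second. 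The main obstacle throughout is purely bookkeeping --- the many indices and bars must be tracked carefully through the composition of the various $R$, $K$, $\xi$, $\zeta$, and magma maps --- but no identities beyond (1)--(5) and the declared symmetries of $g$ and $\hat g$ are required, so each part reduces to pattern-matching once the tabulation above is in hand.
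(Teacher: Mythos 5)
Your proposal is correct and follows essentially the same route as the paper: translating conditions (1)--(5) of Definition \ref{defA} into pointwise identities for $\tau$, $g$, $\hat g$, $\bullet$ and $\kappa$, then checking each part componentwise (the paper defers part (1) to \cite{Doikoup}, reduces part (2) to the conditions of Corollary \ref{rackf} — which are exactly your identities (4) and (5) with $\tau_b^{z_{ij}}=b\triangleright_{z_{ij}}\cdot$ — and sketches part (3) as the same direct computation you carry out). Your write-up simply supplies the details the paper omits, and the index bookkeeping checks out.
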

\begin{proof} 

$ $

\begin{enumerate}
\item The proof of (1) is based on the Definition \ref{defA} (see \cite{Doikoup} for the detailed proof). 

\item To show that $K^{z_i}$ satisfies the reflection equation we just obtain the conditions from (4) and (5) of Definition \ref{defA}: 
(i) $~a \triangleright_{z_{12}} \kappa^{z_2}(b) = \kappa^{z_2}(a \triangleright_{z_{1\bar 2}} b)$ 
and (ii) $~\kappa^{z_1}(a) \triangleright_{z_{ 12}} b =a \triangleright_{z_{\bar 1 2}} b,$ for all $a, b\in X,$ $z_{1,2} \in Y.$ 
These are precisely the conditions satisfied by any solution of the reflection equation with 
$R^{z_{ij}}$ being the $p$-rack operator, see \cref{rackf}.

\item The proof of part (3) is based on the action of the involved maps on $(c,b,a)$ and the 
use of the two main properties (2) and (3) of Definition \ref{defA}.
\end{enumerate}
\end{proof}

\begin{exa} \label{bullet}
    Let $(X, +, \circ)$ be a skew brace, $Y \subseteq {\cal D}(X) \cap Z(X,+)$, and recall the binary operations from Proposition \ref{p2}, 
    $\bullet_{z_{ij}},\ \triangleright_{z_{ij}}: X\times X \to X,$ such that 
    $a\bullet_{z_{ij}}b =a \circ z_i + b \circ z_j$ and 
    $a\triangleright_{z_{ij}} b  = - a \circ z_{i} \circ z_j^{-1} + b + a\circ z_i \circ z_j^{-1},$ 
    for all $a,b\in X,$ $z_{i,j} \in Y.$ Then the following relations are satisfied:
    \[a \bullet_{z_{ji}} b = b \bullet_{z_{ij
}} (b \triangleright_{z_{ij}} a), \quad \mbox{and}  \quad 
b \triangleright_{z_{jk}} (a\triangleright_{z_{ik}} c) 
 = (a\bullet_{z_{ji}}b) \triangleright_{z_{0k}} c,\]
where $z_0 =0$. Hence the function $\hat g^{z_{ijk}}_y(x)$ of 
Definition \ref{defA} is $\hat g^{z_{ijk}}_b(a)=: \hat g^{z_{0k}}_b(a) = b \triangleright_{z_{0k}} a.$ Thus, $\left(X, \bullet_{z_{ij}}\right)$ is a $p$-rack magma and the $p$-rack solution $R^{z_{ij}}$ is a $p$-rack operator. Moreover, the map $K^{z_i}: X \to X$  given by $\kappa^{z_i}(a) = a \circ z_i^{-1} \circ z_i^{-1} + m\zeta$, with $m \in \mathbb{Z}$ and $\zeta \in Z(X,+),$ provided in \cref{ex_conj} is a $p$-rack reflection operator.

\end{exa}

\bigskip

\section*{Acknowledgements}
\noindent A. Doikou acknowledges support from the EPSRC research grant EP/V008129/1. \\The Department of Mathematics and Physics 
“Ennio De Giorgi”,  University of Salento, partially supported this work. M. Mazzotta and P. Stefanelli are members of GNSAGA (INdAM) and the non-profit association ADV-AGTA.

\section*{Declarations}

\subsection*{Declarations}
Not applicable.

\subsection*{Ethical Approval}
Not applicable.

\subsection*{Funding}
A. Doikou is supported by the EPSRC research grant EP/V008129/1.

\subsection*{Availability of data and materials}
No new data were created or analysed in this study. Data sharing is not applicable to this article.

\medskip

\end{document}